\DeclareFontFamily{OMX}{MnSymbolE}{}
\DeclareSymbolFont{MnLargeSymbols}{OMX}{MnSymbolE}{m}{n}
\DeclareFontShape{OMX}{MnSymbolE}{m}{n}{
    <-6>  MnSymbolE5
   <6-7>  MnSymbolE6
   <7-8>  MnSymbolE7
   <8-9>  MnSymbolE8
   <9-10> MnSymbolE9
  <10-12> MnSymbolE10
  <12->   MnSymbolE12
}{}
\DeclareFontShape{OMX}{MnSymbolE}{b}{n}{
    <-6>  MnSymbolE-Bold5
   <6-7>  MnSymbolE-Bold6
   <7-8>  MnSymbolE-Bold7
   <8-9>  MnSymbolE-Bold8
   <9-10> MnSymbolE-Bold9
  <10-12> MnSymbolE-Bold10
  <12->   MnSymbolE-Bold12
}{}
\let\llangle\@undefined
\let\rrangle\@undefined
\DeclareMathDelimiter{\llangle}{\mathopen}%
                     {MnLargeSymbols}{'164}{MnLargeSymbols}{'164}
\DeclareMathDelimiter{\rrangle}{\mathclose}%
                     {MnLargeSymbols}{'171}{MnLargeSymbols}{'171}
\def\widebreve{\mathpalette\wide@breve}
\def\wide@breve#1#2{\sbox\z@{$#1#2$}%
	\mathop{\vbox{\m@th\ialign{##\crcr
				\kern0.08em\brevefill#1{0.8\wd\z@}\crcr\noalign{\nointerlineskip}%
				$\hss#1#2\hss$\crcr}}}\limits}
\def\brevefill#1#2{$\m@th\sbox\tw@{$#1($}%
	\hss\resizebox{#2}{\wd\tw@}{\rotatebox[origin=c]{90}{\upshape(}}\hss$}
\newtheorem{theorem}{Theorem}
\newtheorem{definition}{Definition}
\newtheorem{remark}{Remark}
\theoremstyle{remark}
\newtheorem{remark1}{\bf Remark}[]
\newtheorem{example}{\bf Example}[]
\definecolor{newcolor}{rgb}{.8,.349,.1}
\providecommand{\keywords}[1]
{
	\small	
	\textbf{\textit{Keywords:}} #1
}
\title{{\large \bf  {High-order Accurate Entropy Stable Schemes for Relativistic Hydrodynamics with General Synge-type Equation of State}}\footnote{This work is partially supported by Shenzhen Science and Technology Program (Grant No.~RCJC20221008092757098) and National Natural Science Foundation of China (Grant No.~12171227).}} 
\author{
    Linfeng Xu \thanks{Department of Mathematics, Southern University of Science and Technology, Shenzhen 518055, P.R.~China.  ({\tt xulf2022@mail.sustech.edu.cn}). },
    Shengrong Ding \thanks{Shenzhen International Center for Mathematics and Department of Mathematics, Southern University of Science and Technology, Shenzhen 518055, P.R.~China.  ({\tt dingsr@sustech.edu.cn}). },
	Kailiang Wu\thanks{Department of Mathematics and Shenzhen International Center for Mathematics, Southern University of Science and Technology, and National Center for Applied Mathematics Shenzhen (NCAMS), Shenzhen 518055, China. ({\tt wukl@sustech.edu.cn}). }
}
\date{}
\begin{document}
	
	\maketitle

	\vspace{-7mm}

	\begin{abstract}
All the existing entropy stable (ES) schemes for relativistic hydrodynamics (RHD) in the literature were restricted to the ideal equation of state (EOS), which however is often a poor approximation for most relativistic flows due to its inconsistency with the relativistic kinetic theory. 
This paper develops high-order ES finite difference schemes for RHD with general Synge-type EOS, which encompasses a range of special EOSs.
We first establish an entropy pair for the RHD equations with general Synge-type EOS in any space dimensions. 
We rigorously prove that the found entropy function is strictly convex and derive the associated entropy variables, laying the foundation for designing entropy conservative (EC) and ES schemes. 
Due to relativistic effects, one cannot explicitly express primitive variables, fluxes, and entropy variables  in terms of conservative variables. Consequently, this highly complicates the analysis of the entropy structure of the RHD equations, the investigation of entropy convexity, and the construction of EC numerical fluxes. 
By using a suitable set of parameter variables, we construct novel two-point EC fluxes in a  unified form for general Synge-type EOS. 
We obtain high-order EC schemes through linear combinations of the two-point EC fluxes. Arbitrarily high-order  accurate ES schemes are achieved by incorporating dissipation terms into the EC schemes, based on (weighted)  essentially non-oscillatory reconstructions. Additionally, we derive the general dissipation matrix for general Synge-type EOS based on the scaled eigenvectors of the RHD system. We also define a suitable average of the dissipation  matrix at the cell interfaces to ensure that the resulting ES schemes can resolve stationary contact  discontinuities accurately. 
Several numerical examples are provided to validate the accuracy and effectiveness of our schemes for RHD with four special EOSs. 
		
		\vspace{2mm}
		\noindent	
		\keywords{entropy stable scheme, entropy conservative scheme, relativistic hydrodynamics, 
			equation of state, 
			dissipation matrix}

	\end{abstract}

	\section{Introduction}

This paper is concerned with the development of stable high-order numerical methods for simulating the relativistic hydrodynamics (RHD), which have important applications in astrophysics, high energy physics, etc. 
The governing equations of $d$-dimensional special RHD can be written as a system of hyperbolic conservation laws:
\begin{equation} \label{eq:RHD}
    \frac{\partial \textbf{U}}{\partial t}+\sum\limits_{i=1}^d\frac{\partial \mathbf{F}_i(\mathbf{U})}{\partial x_i} = \textbf{0},
\end{equation}
where
\begin{equation} \label{Fi} 
\textbf{U} = \left(
\begin{array}{c}
D \\
\textbf{m}\\
E\\
\end{array} \right),\ \ \ \quad
\mathbf{F}_i(\textbf{U})=
\left(
\begin{array}{c}
D v_i \\
v_i\textbf{m}+p\mathbf{e}_i\\
m_i\\
\end{array} \right)
\end{equation}
with the mass density $D = \rho \gamma$, momentum vector $\mathbf{m} = \rho h \gamma^2 \mathbf{v}$, and energy $E = \rho h \gamma^2-p$.  
Here, $\rho,\mathbf{v}=(v_1,v_2,\cdots,v_d)^\top$, and $p$ denote the rest-mass density, the fluid velocity vector, and the pressure, respectively. The vector $\mathbf{e}_i$ denotes the $i$th column of the identity matrix of size $d$. The velocity is normalized such that the speed of
light is 1. Additionally, $\gamma = \frac{1}{\sqrt{1-|\mathbf{v}|^2}}$ is the Lorentz factor, $h = 1+e+\frac{p}{\rho}$ represents the specific enthalpy, and $e$ is the specific internal energy.

The RHD system \eqref{eq:RHD} is closed by an equation of state (EOS). A general EOS may be written as 
$$
h = h(p,\rho), 
$$ 
which typically satisfies the following inequality \cite{WuTang2017ApJS} for relativistic causality: 
\begin{equation}\label{assumption: strict convex 1}
	h\ \left(\frac{1}{\rho} - \frac{\partial h}{\partial p}(p,\rho)\right) < \frac{\partial h}{\partial \rho}(p,\rho) < 0
\end{equation}
such that the local sound speed $c_s < 1$; see \cite{WuTang2017ApJS} for more details. In this paper, we  assume that $h$ only depends 
on $\frac{p}{\rho}$, namely, 
\begin{equation}\label{eq:gEOS}
	h = h(\theta) = 1  + e (\theta) + \theta \quad \mbox{with} \quad \theta := \frac{p}{\rho}, 
\end{equation}
where $\theta$ is a temperature-like variable. For convenience, we will refer to such a general class of EOS \eqref{eq:gEOS} as Synge-type EOS, because the Synge EOS \eqref{eq:PEOS} for perfect gas and its common approximations belong to this type.
Then the condition \eqref{assumption: strict convex 1} can be equivalently reformulated as
\begin{equation} \label{assumption:1_theta}
	h(\theta)> 
	 \frac{\theta\left(1+e'(\theta)\right)}{e'(\theta)},\qquad e'(\theta)>0,
\end{equation}
which are used throughout this paper.
The condition \eqref{assumption:1_theta}, along with $\lim_{\theta \to 0^+} e(\theta)=0$, implies that $e(\theta) +1 > \sqrt{\theta^2+1} $ or equivalently $h(\theta)>\sqrt{\theta^2+1} + \theta$. Note that this requirement is less stringent than the Taub inequality, $(h-\theta)(h-4\theta)\ge 1$, proposed in \cite{taub1948relativistic}. Consequently, our assumptions on the general EOS \eqref{eq:gEOS} are rather mild, accommodating a wide range of commonly used EOSs.  
A special example of such EOS is the ideal EOS:  
\begin{equation}\label{ID-EOS}
	h(\theta) = 1 + \frac{\Gamma}{\Gamma -1} \theta,  
\end{equation}
where the constant $\Gamma \in (1,2]$ stands for the adiabatic index. 
The ideal EOS \eqref{ID-EOS} is commonly used in non-relativistic fluid dynamics and has also been borrowed to the study of relativistic flows. However, for most relativistic astrophysical flows, the ideal EOS \eqref{ID-EOS} is a poor approximation due to its inconsistency with the relativistic kinetic theory (see \cite{taub1948relativistic}). Furthermore, when the adiabatic index $\Gamma > 2$, the ideal EOS \eqref{ID-EOS} allows for superluminal wave propagation, which violates the principles of special relativity. 
In the relativistic case, 
the correct EOS for the single-component perfect gas was given by Synge in \cite{Synge1957}: 
\begin{equation}\label{eq:PEOS}
	h = \frac{ K_3(1/\theta) }{K_2(1/\theta)},
\end{equation}
where $K_2$ and $K_3$ 
 are the second kind modified Bessel functions of order two and three, respectively. 
Due to the presence of the complicated modified Bessel functions, the EOS \eqref{eq:PEOS} is computationally expensive and thus rarely used in the literature. 
Several efforts have been made to derive simplified EOSs that offer greater accuracy than the the ideal EOS \eqref{ID-EOS}, while being simpler than the EOS \eqref{eq:PEOS}. 
Ryu, Chattopadhyay, and Choi \cite{ryu2006equation} proposed the following EOS
\begin{equation}\label{hEOS1}
	h(\theta) = \frac{2(6\theta^2+4\theta+1)}{3\theta+2}. 
\end{equation}
Sokolov, Zhang, and Sakai \cite{sokolov2001simple} suggested the following EOS 
\begin{equation}\label{hEOS2}
	h(\theta) = 2\theta+\sqrt{1+4\theta^2}.
\end{equation}
Mathews  \cite{mathews1971hydromagnetic} gave the following EOS 
\begin{equation}\label{hEOS3}
	h(\theta) = \frac{5}{2}\theta+\sqrt{1+\frac{9}{4}\theta^2},
\end{equation}
which was later employed by Mignone, Plewa, and Bodo \cite{mignone2005piecewise} in numerical RHD. 
Following \cite{mignone2005piecewise,ryu2006equation}, we will abbreviate the 
EOSs \eqref{ID-EOS}, \eqref{hEOS1}, \eqref{hEOS2}, and \eqref{hEOS3} as 
ID-EOS, RC-EOS, IP-EOS, and TM-EOS, respectively. 
We remark that the above EOSs \eqref{ID-EOS}--\eqref{hEOS3} all belong to the general class of EOSs in the form \eqref{eq:gEOS} and satisfy the condition \eqref{assumption:1_theta}.

The study of the RHD system presents significant difficulties due to its inherent nonlinearity, making analytical approaches difficult to employ. As a result, numerical simulations have become the primary method for investigating the underlying physical principles in RHD. To the best of our knowledge, the earliest numerical studies of RHD can be traced back to references \cite{may1966hydrodynamic,wilson1972numerical}, in which finite difference methods incorporating the artificial viscosity technique were employed to solve the RHD equations in either Lagrangian or Eulerian coordinates. Over the past few decades, numerous high-resolution and high-order accurate numerical methods have been developed to numerically solve the RHD equations. These methods encompass a variety of techniques, such as finite volume methods  (e.g.~\cite{mignone2005hllc,tchekhovskoy2007wham,BalsaraKim2016,chen2021second}), finite difference methods (e.g.~\cite{dolezal1995,del2002efficient,radice2012thc,WuTang2015}), and discontinuous Galerkin methods (e.g.~\cite{radice2011discontinuous,zhao2013runge,KIDDER201784,teukolsky2016formulation}). Adaptive mesh refinement \cite{2006Zhang} and adaptive moving mesh \cite{he2012adaptive1} techniques  have been developed to further enhance the resolution of discontinuities and complex RHD flow structures. 
The physical-constraint-preserving high-order accurate schemes were also designed to maintain 
the positivity of density and pressure as well as the subluminal constraint on the velocity; see  \cite{WuTang2015,WuTang2017ApJS,Wu2017,WuTangM3AS,wu2021provably,WuMEP2021,chen2022physical,WuShu2021GQL}. 
For more related works, interested readers can refer to the review articles \cite{marti2003numerical,Marti2015}, the textbook \cite{rezzolla2013relativistic}, a limited list of some recent papers \cite{endeve2019thornado,wu2020entropy,marquina2019capturing,mewes2020numerical}, as well as the references therein.

The RHD equations \eqref{eq:RHD} exhibit a nonlinear hyperbolic nature, leading to solutions that may be discontinuous with the presence of shocks or contact discontinuities. To address this, weak solutions are typically considered. However, weak solutions may not be unique. To identify the physically relevant solution among the set of weak solutions, admissibility criteria in the form of entropy conditions are commonly imposed. 
Numerically, it is desirable to develop schemes that  satisfy a discrete version of the entropy condition, known as {\em entropy stable} (ES) schemes. Such ES schemes ensure that entropy is conserved in smooth regions while being dissipated across discontinuities, thereby following the entropy principle of physics in an accurate and robust manner. Moreover, the ES methods allow for controlling the amount of dissipation introduced into the schemes to guarantee entropy stability. Thus, the development of ES schemes for the RHD equations \eqref{eq:RHD} is both highly desirable and meaningful.

The study of entropy stability analysis has been extensively carried out for first-order accurate schemes and scalar conservation laws; see  \cite{crandall1980monotone,harten1976finite,osher1984riemann,osher1988convergence}. For hyperbolic systems of conservation laws, most of the attention has been paid to exploring ES schemes that focus on a single given entropy function. 
The framework of ES schemes originates from Tadmor \cite{tadmor1987numerical,tadmor2003entropy}, who systematically established a solid foundation for constructing the second-order entropy conservative (EC) numerical fluxes and first-order ES fluxes. 
Lefloch, Mercier, and Rohde \cite{lefloch2002fully} proposed a general approach for constructing higher-order accurate EC fluxes.
Building upon these developments, Fjordholm, Mishra, and Tadmor \cite{fjordholm2012arbitrarily} developed a general approach to construct ES schemes with arbitrary order of accuracy. 
This approach combines high-order EC numerical fluxes with the essentially non-oscillatory (ENO) reconstruction that satisfies the sign property \cite{fjordholm2013eno}. 
On the other hand, high-order ES schemes have also been constructed via the summation-by-parts (SBP) procedure \cite{fisher2013high,carpenter2014entropy,gassner2013skew}. 
ES space-time discontinuous Galerkin schemes have been investigated in \cite{Barth1998,Barth2006,hiltebrand2014entropy}, where the proof of entropy stability requires exact integration. Recently, a framework for designing ES high-order discontinuous Galerkin methods through suitable numerical quadrature has been proposed in \cite{chen2017entropy}. In this study, the SBP operators established in \cite{fisher2013high,carpenter2014entropy,gassner2013skew} were used and generalized to triangles. 
The key building blocks of high-order accurate ES schemes are the two-point EC numerical fluxes. 
In \cite{tadmor1987numerical,tadmor2003entropy}, Tadmor proposed a general way to derive the two-point EC numerical fluxes, whose formula contains a path integration. 
This leads to difficulties or much cost during the computation since the integration may not have an explicit formula.   
Consequently, researchers have focused on developing \emph{affordable} two-point EC numerical fluxes with explicit formulas.  Several notable advancements have been made in this area for various equations, including the compressible Euler systems  \cite{roe2006affordable,ismail2009affordable,chandrashekar2013kinetic,ranocha2018comparison,zhao2022strictly,li2022high}, shallow water equations \cite{gassner2016well,duan2021}, and magnetohydrodynamics (MHD)  \cite{chandrashekar2016entropy,winters2016affordable,liu2018entropy}. 
In \cite{abgrall2018general}, Abgrall proposed a general framework for residual distribution (RD) schemes to satisfy additional conservation relations, leading to the construction of EC and ES schemes by incorporating suitable correction terms. This entropy correction approach was further extended to time-dependent hyperbolic problems by Abgrall, {\"O}ffner, and Ranocha in \cite{abgrall2022reinterpretation} to design schemes that simultaneously satisfy multiple desired properties.  For the first time, the entropy correction method was used in \cite{abgrall2022reinterpretation} to obtain fully-discrete EC/ES RD schemes.

In recent years, significant efforts have been devoted to developing effective ES schemes for RHD; see \cite{duan2019high, bhoriya2020entropy, duan2021entropy, biswas2022entropy, duan2022high}. The focus of these studies was on the RHD equations \eqref{eq:RHD} with ID-EOS \eqref{ID-EOS}. 
The authors of \cite{duan2019high} and \cite{bhoriya2020entropy} proposed high-order accurate ES finite difference schemes for RHD using two-point EC fluxes and suitable entropy dissipation operators. 
In subsequent work \cite{duan2021entropy, duan2022high}, Duan and Tang extended these schemes to adaptive moving meshes in curvilinear coordinates. 
Additionally, the study of ES schemes was extended to the relativistic MHD equations in \cite{wu2020entropy, duan2020high}. 
It was proven in \cite{wu2020entropy} that conservative relativistic MHD equations are not symmetrizable and do not admit a thermodynamic entropy pair, and a symmetrizable relativistic MHD system with convex thermodynamic entropy pair was proposed in \cite{wu2020entropy}. 
Based on the symmetrizable relativistic MHD equations, high-order ES schemes were developed within the finite difference framework \cite{wu2020entropy} and the discontinuous Galerkin framework \cite{duan2020high}. 
The high-order ES adaptive moving mesh methods were also well studied for relativistic MHD in 
\cite{duan2022high}.

It is worth noting that all the existing work on EC and ES schemes for RHD and relativistic MHD was limited to the ID-EOS \eqref{ID-EOS}. The study of ES schemes for RHD with more accurate EOSs has not been explored yet. 
This paper makes the first effort on constructing explicit EC fluxes and developing high-order ES schemes for 
the RHD equations \eqref{eq:RHD} with general Synge-type EOS \eqref{eq:gEOS}, which covers a wide range of EOSs \eqref{ID-EOS}--\eqref{hEOS3} as special examples.  The difficulties of this work are multi-faceted and include the following aspects:
\begin{itemize}
	\item The convex entropy and entropy fluxes for the RHD system with a general EOS are unclear. 
	\item Due to the nonlinear coupling between the RHD equations \eqref{eq:RHD}, 
	{\em the primitive variables $\mathbf{V}:=(\rho,\mathbf{v},p)^\top$, the fluxes, and the entropy variables all cannot be explicitly expressed by the conservative variables $\mathbf{U}=(D,\mathbf{m},E)^\top$.}  
	This makes it difficult to analyze the entropy structure of the RHD equations \eqref{eq:RHD}, study the convexity of entropy, and construct EC numerical fluxes. 
	\item Developing a unified EC flux formulation for RHD with general EOS is quite nontrivial.
\end{itemize} 
The efforts in this paper are summarized as follows:
\begin{itemize}
    \item We discover an admissible entropy pair for the RHD equations with general Synge-type EOS \eqref{eq:gEOS} in any space dimension. We rigorously prove that the found entropy function is strictly convex, 
    under the relativistic causality condition \eqref{assumption:1_theta}. 
    Furthermore, we derive the entropy variables associated with the convex entropy. 
    These findings lay the foundation for designing EC and ES schemes for RHD with general Synge-type EOS. 
    Due to relativistic effects, 
    the formulation of 
    the Hessian matrix of the entropy function with respect to the conservative variables is quite complicated, making it very difficult to study the convexity of the entropy function. 
    \item We construct the novel two-point EC fluxes in a unified form for RHD with general Synge-type EOS. 
    The construction involves carefully selecting a set of parameter variables that can express the entropy variables and potential fluxes 
     in simple explicit forms. 
    We remark that constructing EC fluxes is highly technical and involves complex reformulation and decomposition of the jumps of the entropy variables. 
    \item We develop semidiscrete high-order accurate EC and ES schemes for the RHD equations with general Synge-type EOS. Second-order EC schemes use the proposed two-point EC fluxes, while higher-order EC schemes are constructed by linearly combining the two-point EC fluxes. Arbitrarily high-order accurate ES schemes are obtained by adding dissipation terms into the EC schemes, based on ENO or weighted ENO (WENO) reconstructions. 
    Moreover, we derive the general dissipation matrix, based on the scaled eigenvectors of the RHD system, for general Synge-type EOS. 
    We also define a suitable average of the dissipation matrix at the cell interfaces, ensuring that the resulting ES schemes can resolve stationary contact discontinuities exactly.  
    \item  We implement the proposed one-dimensional (1D) and two-dimensional (2D) high-order EC and ES schemes coupled with strong-stability-preserving high-order Runge--Kutta time discretization.  
    Several numerical examples are provided to validate the accuracy and effectiveness of our schemes for RHD with various special EOSs.
\end{itemize}

This paper is structured as follows. Section \ref{section:2} presents the entropy pair for the RHD system with   general Synge-type EOS \eqref{eq:gEOS}, and establishes the convexity of the associated entropy function. Additionally, this section derives the relevant entropy variables. In Section \ref{section:3}, we construct the 1D EC and ES schemes. We further discuss the extensions to 2D in Section \ref{section:4}. Section \ref{section:5} presents the numerical experiments, and finally, Section \ref{section:6} provides the concluding remarks.

\section{Entropy analysis for RHD equations}
\label{section:2}


In this section, we seek an admissible entropy pair for the RHD equations \eqref{eq:RHD} with general Synge-type EOS \eqref{eq:gEOS}. 
Furthermore, we will prove that the found entropy function is strictly convex, and then derive the entropy variables associated with the convex entropy.

\subsection{Entropy pair}

First, we recall the definition of an entropy pair. 

\begin{definition}
For the $d$-dimensional RHD system \eqref{eq:RHD}, a continuously differentiable function $\eta:\mathbb{R}^d\longrightarrow\mathbb{R}$ is called the entropy function if there exist $d$ functions $q_i:\mathbb{R}^d\longrightarrow\mathbb{R}$, called entropy fluxes, such that
\begin{equation} \label{mathEntropyDef}
    \left(\frac{\partial \eta}{\partial \mathbf{U}}\right)^\top  \frac{\partial \mathbf{F}_i}{\partial \mathbf{U}}=  \left(\frac{\partial q_i}{\partial \mathbf{U}}\right)^\top,\quad  i = 1,\cdots,d.
\end{equation}
In this case, we call $(\eta,\mathbf{q})$ an entropy pair of \eqref{eq:RHD}, where $\mathbf{q}=(q_1,\cdots,q_d)^\top$, $d=1,2,3$.
\end{definition} 

\begin{theorem}\label{thm:entropypair}
    Define
    \begin{align} \label{entropyFunDef}
        \eta(\mathbf{U}):=-DS, \qquad 
        \mathbf{q}(\mathbf{U}):=-DS\mathbf{v}
    \end{align}
	with 
	    \begin{equation} \label{S_Def}
		S:=-\ln{\rho}+\int^{\theta} \frac{e'(x)}{x}dx.
	\end{equation}
	Then $(\eta(\mathbf{U}),\mathbf{q}(\mathbf{U}))$ forms an entropy pair for the RHD system \eqref{eq:RHD} with general Synge-type EOS \eqref{eq:gEOS}. 
\end{theorem}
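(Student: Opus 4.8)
The plan is to verify the defining relation \eqref{mathEntropyDef} without ever inverting the map $\mathbf{V}\mapsto\mathbf{U}$, which cannot be written explicitly for the RHD system. Since $\mathbf{V}=(\rho,\mathbf{v},p)^\top\mapsto\mathbf{U}$ is a smooth diffeomorphism on the admissible set, I regard $\eta$, $q_i$, $\mathbf{U}$, and $\mathbf{F}_i$ all as smooth functions of $\mathbf{V}$ and work with differentials, which are coordinate-free. Introducing the entropy variables $\mathbf{w}:=\partial\eta/\partial\mathbf{U}$, characterized by the one-form identity $d\eta=\mathbf{w}^\top d\mathbf{U}$, the condition \eqref{mathEntropyDef} becomes exactly $dq_i=\mathbf{w}^\top d\mathbf{F}_i$ for $i=1,\dots,d$. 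Hence the first identity determines $\mathbf{w}$ from the readily computable Jacobian $\partial\mathbf{U}/\partial\mathbf{V}$, and the theorem reduces to verifying the second identity with that $\mathbf{w}$, all computations being carried out in the $\mathbf{V}$-coordinates.

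Before computing, I would record the thermodynamic relation hidden in \eqref{S_Def}. Differentiating $S$ and using $\theta=p/\rho$ gives $dS=-\,d\rho/\rho+(e'(\theta)/\theta)\,d\theta$, equivalently the Gibbs-type identity $\theta\,dS=de-(p/\rho^2)\,d\rho$; this is precisely the first law with $\theta$ playing the role of temperature, and it is exactly what forces the integrand $e'(x)/x$ in the definition of $S$. Next I would differentiate the defining expressions $D=\rho\gamma$, $\mathbf{m}=\rho h\gamma^2\mathbf{v}$, and $E=\rho h\gamma^2-p$, using $d\gamma=\gamma^3(\mathbf{v}\cdot d\mathbf{v})$ and $dh=h'(\theta)\,d\theta=(1+e'(\theta))\,d\theta$ from \eqref{eq:gEOS}, to assemble $d\mathbf{U}$ and $d\mathbf{F}_i$ as linear forms in $d\rho,\,d\mathbf{v},\,dp$. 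From $d\eta=-S\,dD-D\,dS$ together with $d\eta=\mathbf{w}^\top d\mathbf{U}$, I would then read off the components of $\mathbf{w}$ by matching the coefficients of $d\rho$, $d\mathbf{v}$, and $dp$.

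With $\mathbf{w}$ determined, the verification amounts to checking $dq_i=\mathbf{w}^\top d\mathbf{F}_i$ componentwise in $\mathbf{V}$. Here the relation $\mathbf{q}=\eta\mathbf{v}$, i.e.\ $dq_i=v_i\,d\eta+\eta\,dv_i$, keeps the left-hand side compact, while the right-hand side is expanded using the flux structure in \eqref{Fi}. A useful consistency check, and an alternative route, is that \eqref{mathEntropyDef} is equivalent to $\partial_t\eta+\sum_i\partial_{x_i}q_i=0$ for smooth solutions; since the first component of \eqref{eq:RHD} is the mass conservation law $\partial_t D+\nabla\!\cdot(D\mathbf{v})=0$, this reduces to the transport identity $\partial_t S+\mathbf{v}\cdot\nabla S=0$, i.e.\ the specific entropy is advected along particle paths. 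I expect the main obstacle to be purely algebraic: the Lorentz factor couples all components of $\mathbf{v}$ through $\gamma$ and $\gamma^2$, so the differentials of $\mathbf{m}$ and $E$ generate cross terms in $d\mathbf{v}$ whose cancellation is delicate, and these cancellations succeed only because of the particular form of $S$ dictated by the Gibbs relation above. I would therefore organize the check by grouping all terms proportional to each of $d\rho$, $dp$, and the individual $dv_j$, and show that each group matches, with the vector-valued $d\mathbf{v}$ coefficients being the crux of the computation.
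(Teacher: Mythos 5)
Your proposal is correct and follows essentially the same route as the paper: both work in the primitive variables $\mathbf{V}=(\rho,\mathbf{v},p)^\top$, use the chain rule through the Jacobian $\partial\mathbf{U}/\partial\mathbf{V}$ to determine the entropy variables $\mathbf{w}=\partial\eta/\partial\mathbf{U}$, and then verify the flux condition in the equivalent form $\mathbf{w}^\top\,\partial\mathbf{F}_i/\partial\mathbf{V}=(\partial q_i/\partial\mathbf{V})^\top$. The only cosmetic difference is that you solve the linear system $d\eta=\mathbf{w}^\top d\mathbf{U}$ for $\mathbf{w}$ rather than computing the full inverse $\partial\mathbf{V}/\partial\mathbf{U}$ as the paper does, and your Gibbs-relation observation $\theta\,dS=de-(p/\rho^2)\,d\rho$ correctly identifies why the integrand $e'(x)/x$ in the definition of $S$ makes the cancellations work.
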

\begin{proof}
    Let us verify that $(\eta(\mathbf{U}),\mathbf{q}(\mathbf{U}))$ satisfies the condition \eqref{mathEntropyDef}. 
    Unfortunately, direct calculations of  $ \frac{\partial \eta}{\partial \mathbf{U}}$, $\frac{\partial \mathbf{F}_i}{\partial \mathbf{U}},$ and $\frac{\partial q_i}{\partial \mathbf{U}}$ are very difficult, because these quantities cannot be explicitly formulated in terms of $\mathbf{U}$. 
    Since $\eta,\ \mathbf{F}_i,\ q_i$, and the conservative variables $\mathbf{U}$ can all be explicitly expressed by the primitive variables $\mathbf{V}=(\rho,\mathbf{v},p)^\top$, we can calculate $ \frac{\partial \eta}{\partial \mathbf{U}},\ \frac{\partial \mathbf{F}_i}{\partial \mathbf{U}},\ {\rm and}\frac{\partial q_i}{\partial \mathbf{U}}$ following the chain rule
    \begin{equation}\label{eq:chainrule}
    \begin{aligned}
        \left(\frac{\partial \eta}{\partial \mathbf{U}}\right)^\top&=\left(\frac{\partial \eta}{\partial \mathbf{V}}\right)^\top\frac{\partial \mathbf{V}}{\partial \mathbf{U}},\\ 
        \frac{\partial \mathbf{F}_i}{\partial \mathbf{U}}\ \ \ &=\ \ \frac{\partial \mathbf{F}_i}{\partial \mathbf{V}}\ \ \ \frac{\partial \mathbf{V}}{\partial \mathbf{U}},\quad i = 1,2,\cdots,d,\\ 
        \left(\frac{\partial q}{\partial \mathbf{U}}\right)^\top&=\left(\frac{\partial q}{\partial \mathbf{V}}\right)^\top\frac{\partial \mathbf{V}}{\partial \mathbf{U}},
    \end{aligned}
    \end{equation}
    where the matrix $\frac{\partial \mathbf{V}}{\partial \mathbf{U}}$ can be calculated through the inverse of $\frac{\partial\mathbf{U}}{\partial\mathbf{V}}$ which is easy to compute:  
    \begin{equation} \label{U_V}
        \frac{\partial\mathbf{U}}{\partial\mathbf{V}}=
        \begin{pmatrix}
            \gamma & \rho\gamma^3\mathbf{v}^\top & 0\\
            \gamma^2\Big(h-\theta\left(1+e'(\theta)\right)\Big)\mathbf{v} & \rho h\gamma^2\mathbf{I}_d+2\rho h\gamma^4\mathbf{v}\mathbf{v}^\top & \gamma^2\left(1+e'(\theta)\right)\mathbf{v}\\
            \gamma^2\Big(h-\theta\left(1+e'(\theta)\right)\Big) & 2\rho h\gamma^4\mathbf{v}^\top & \gamma^2\left(1+e'(\theta)\right)-1
        \end{pmatrix}. 
    \end{equation}
    Calculating the inverse of $\frac{\partial \mathbf{U}}{\partial \mathbf{V}}$ gives 
    \begin{equation} \label{V_U}
        \frac{\partial \mathbf{V}}{\partial \mathbf{U}}=
        \frac{1}{{\delta_{\theta}}}
        \begin{pmatrix}
            h\gamma^{-1}\left(e'(\theta)-|\mathbf{v}|^2\right) & -\left(e'(\theta)+|\mathbf{v}|^2\right)\mathbf{v}^\top & \left(1+e'(\theta)\right)|\mathbf{v}|^2\\
            \frac{\gamma^{-3}}{\rho}\Big(h-\theta\left(1+e'(\theta)\right)\Big)\mathbf{v} & \mathbf{M}_1 & -\frac{\left(1+e'(\theta)\right)\left(1-|\mathbf{v}|^2\right)}{\rho}\mathbf{v}\\
            -h\gamma^{-1}\Big(h-\theta\left(1+e'(\theta)\right)\Big) & -\Big(h+\theta\left(1+e'(\theta)\right)\Big)\mathbf{v}^\top & h+\theta\left(1+e'(\theta)\right)|\mathbf{v}|^2
        \end{pmatrix},
    \end{equation}
    where $\delta_{\theta}=he'(\theta)-\theta(1+e'(\theta))|\mathbf{v}|^2$, and
    $\mathbf{M}_1$ is a $d\times d$ matrix for the $d$-dimensional RHD system and is defined by 
    \begin{equation*}
        \mathbf{M}_1:=\frac{1-|\mathbf{v}|^2}{\rho h}\bigg(\delta_{\theta}\mathbf{I}_d+\Big(h+\theta\left(1+e'(\theta)\right)\Big)\mathbf{v}\mathbf{v}^\top\bigg)
    \end{equation*}
	with $\mathbf{I}_d$ denoting the $d\times d$ identity matrix. 
	For $d=1,2,3$, the formulas of $\frac{\partial \eta}{\partial \mathbf{V}}$, $\frac{\partial \mathbf{F}_i}{\partial \mathbf{V}}$ and $\frac{\partial q_i}{\partial \mathbf{V}}$ $(i=1,\cdots,d)$ can also be directly calculated as
    \begin{equation} \label{eta_V}
        \frac{\partial \eta}{\partial \mathbf{V}}=
        \begin{pmatrix}
            \gamma\left(1+e'(\theta)-{S}\right),-\ \rho\gamma^3{S}\mathbf{v}^\top,-\frac{\gamma e'(\theta)}{\theta}
        \end{pmatrix}^\top,
    \end{equation}
    \begin{equation} \label{Fi_V}
        \frac{\partial \mathbf{F}_i}{\partial \mathbf{V}}=
        \begin{pmatrix}
            \gamma v_i & \rho\gamma^3\Big(v_i\mathbf{v}^\top+\left(1-|\mathbf{v}|^2\right)\mathbf{e}_i^\top\Big) & 0\\
            \gamma^2\Big(h-\theta(1+e'(\theta))\Big)v_i\mathbf{v} & \mathbf{M}_2 & \gamma^2(1+e'(\theta))v_i\mathbf{v}+\mathbf{e}_i\\
            \gamma^2\Big(h-\theta(1+e'(\theta))\Big)v_i & \rho h\gamma^4\Big(2v_i\mathbf{v}^\top+(1-|\mathbf{v}|^2)\mathbf{e}_i^\top\Big) & \gamma^2(1+e'(\theta))v_i
        \end{pmatrix}
    \end{equation}
    with $\mathbf{M}_2$ being a $d\times d$ matrix given by 
    \begin{equation*}
        \mathbf{M}_2:=\rho h\gamma^4\Big(2v_i\mathbf{v}\mathbf{v}^\top+(1-|\mathbf{v}|^2)\big(v_i\mathbf{I}_d+\mathbf{v}\mathbf{e}_i^\top\big)\Big),
    \end{equation*}
    and
    \begin{equation} \label{qi_V}
        \frac{\partial q_i}{\partial \mathbf{V}}=
        \begin{pmatrix}
            \gamma\left(1+e'(\theta)-{S}\right)v_i,-\ \rho\gamma^3{S}\Big(\left(1-|\mathbf{v}|^2\right)\mathbf{e}_i^\top+v_i\mathbf{v}^\top\Big),-\frac{\gamma e'(\theta)v_i}{\theta}
        \end{pmatrix}^\top.
    \end{equation}
    Based on \eqref{V_U} and \eqref{eta_V}, we can use the chain rule \eqref{eq:chainrule} to calculate the derivatives of the entropy function $\eta$ with respect to the conservative variables $\mathbf{U}$: 
    \begin{align*}
        \left(\frac{\partial\eta}{\partial\mathbf{U}}\right)^\top
        &=\left(\frac{\partial\eta}{\partial\mathbf{V}}\right)^\top
         \frac{\partial\mathbf{V}}{\partial\mathbf{U}}.
    \end{align*}
    Let ${\bf c}_i$ be the $i$th column of the matrix $\frac{\partial\mathbf{V}}{\partial\mathbf{U}}$. Then we have
    \begin{align*}
          \left(\frac{\partial\eta}{\partial\mathbf{V}}\right)^\top {\bf c}_1
        &=\frac{1}{\delta_{\theta}}\left(h\left(1+e'(\theta)-S\right)(e'(\theta)-|\mathbf{v}|^2)-\left(h-\theta(1+e'(\theta))\right)S|\mathbf{v}|^2+\frac{he'(\theta)\left(h-\theta(1+e'(\theta))\right)}{\theta}\right)\\
        &=\frac{1}{\delta_{\theta}}\left(\left(-he'(\theta)+\theta(1+e'(\theta))|\mathbf{v}|^2\right)S-h(1+e'(\theta))|\mathbf{v}|^2+\frac{h^2e'(\theta)}{\theta}\right)\\
        &=\frac{1}{\delta_{\theta}}\left(-he'(\theta)+\theta(1+e'(\theta))|\mathbf{v}|^2\right)\left(S-\frac{h}{\theta}\right)=\frac{h-\theta S}{\theta},\\
        \left(\frac{\partial\eta}{\partial\mathbf{V}}\right)^\top {\bf c}_2
        &=\frac{1}{\delta_{\theta}}\left(-\gamma(1+e'(\theta)-S)(e'(\theta)+|\mathbf{v}|^2)\mathbf{v}^\top-\frac{\gamma S\mathbf{v}^\top\left(\delta_{\theta}\mathbf{I}_d+\left(h+\theta(1+e'(\theta))\right)\mathbf{v}\mathbf{v}^\top\right)}{h}\right)\\
        &\quad +\frac{1}{\delta_{\theta}}\left(\frac{h\gamma e'(\theta)}{\theta}+\gamma e'(\theta)(1+e'(\theta))\right)\mathbf{v}^\top\\
        &=\frac{1}{\delta_{\theta}}\left(\gamma S\left(e'(\theta)+|\mathbf{v}|^2-\frac{\delta_{\theta}+\left(h+\theta(1+e'(\theta))\right)|\mathbf{v}|^2}{h}\right)\mathbf{v}^\top-\gamma(1+e'(\theta))|\mathbf{v}|^2\mathbf{v}^\top+\frac{h\gamma e'(\theta)}{\theta}\mathbf{v}^\top\right)\\
        &=\frac{\gamma}{\delta_{\theta}}\left(\frac{he'(\theta)}{\theta}-(1+e'(\theta))|\mathbf{v}|^2\right)\mathbf{v}^\top=\frac{\gamma \mathbf{v}^\top}{\theta},\\
        \left(\frac{\partial\eta}{\partial\mathbf{V}}\right)^\top {\bf c}_3 
        &=\frac{1}{\delta_{\theta}}\left(\gamma(1+e'(\theta)-S)(1+e'(\theta))|\mathbf{v}|^2+\gamma S(1+e'(\theta))|\mathbf{v}|^2-\frac{\gamma e'(\theta)}{\theta}\left(h+\theta(1+e'(\theta))|\mathbf{v}|^2\right)\right)\\
        &=\frac{\gamma}{\delta_{\theta}}\left((1+e'(\theta))|\mathbf{v}|^2-\frac{he'(\theta)}{\theta}\right)=-\frac{\gamma}{\theta}.
    \end{align*}
    Hence, we obtain
    \begin{equation} \label{generalEntropyVar:cal}
        \frac{\partial\eta}{\partial\mathbf{U}}=\frac1{\theta}
		(h-\theta S, \gamma {\bf v}^\top, -\gamma )^\top.
    \end{equation}
    Let $\frac{\partial \mathbf{F}_i}{\partial \mathbf{V}}=:
    \begin{pmatrix}
        \hat{\bf F}_i^{(1)} & \hat{\mathbf{F}}_i^{(2)} & \hat{\bf F}_i^{(3)}
    \end{pmatrix}$ and
    $\frac{\partial q_i}{\partial \mathbf{V}}=:
    \begin{pmatrix}
        \hat{q}_i^{(1)} & \hat{\mathbf{q}}_i^{(2)} & \hat{q}_i^{(3)}
    \end{pmatrix}$. Then we have
    \begin{align*}
        \left(\frac{\partial\eta}{\partial\mathbf{U}}\right)^\top\hat{\bf F}_i^{(1)}-\hat{q}_i^{(1)} &= \frac{h-\theta S}{\theta}\gamma v_i+\frac{\gamma {\bf v}^\top}{\theta} \gamma^2\Big(h-\theta(1+e'(\theta))\Big)v_i\mathbf{v}\\
        &\quad-\frac{\gamma}{\theta}\gamma^2\Big(h-\theta(1+e'(\theta))\Big)v_i-\gamma\left(1+e'(\theta)-{S}\right)v_i\\
        &= \frac{h-\theta S}{\theta}\gamma v_i-\frac{\gamma v_i}{\theta}\left(h-\theta(1+e'(\theta))\right)-\gamma v_i(1+e'(\theta)-S)\\
        &=\gamma v_i\left(\frac{h-\theta S}{\theta}-\frac{h-\theta(1+e'(\theta))}{\theta}-1-e'(\theta)+S\right)\\
        &=0,\\
        \left(\frac{\partial\eta}{\partial\mathbf{U}}\right)^\top\hat{\bf F}_i^{(2)}-\hat{\bf q}_i^{(2)} &= \frac{h-\theta S}{\theta}\rho\gamma^3\Big(v_i\mathbf{v}^\top+\left(1-|\mathbf{v}|^2\right)\mathbf{e}_i^\top\Big)\\
        &\quad+\frac{\gamma {\bf v}^\top}{\theta}\rho h\gamma^4\Big(2v_i\mathbf{v}\mathbf{v}^\top+(1-|\mathbf{v}|^2)\big(v_i\mathbf{I}_d+\mathbf{v}\mathbf{e}_i^\top\big)\Big)\\
        &\quad-\frac{\gamma}{\theta}\rho h\gamma^4\Big(2v_i\mathbf{v}^\top+(1-|\mathbf{v}|^2)\mathbf{e}_i^\top\Big)+ \rho\gamma^3{S}\Big(\left(1-|\mathbf{v}|^2\right)\mathbf{e}_i^\top+v_i\mathbf{v}^\top\Big)\\
        &=\frac{\rho h\gamma^3}{\theta}\left(2v_i\mathbf{v}^\top+\left(1-|\mathbf{v}|^2\right)\mathbf{e}_i^\top+2\gamma^2v_i|\mathbf{v}|^2\mathbf{v}^\top+|\mathbf{v}|^2\mathbf{e}_i^\top-2\gamma^2v_i\mathbf{v}^\top-\mathbf{e}_i^\top\right)\\
        &=\frac{\rho h\gamma^3}{\theta}\left(2v_i\mathbf{v}^\top-2\gamma^2v_i\mathbf{v}^\top(1-|\mathbf{v}|^2)\right)\\
        &=0,\\
        \left(\frac{\partial\eta}{\partial\mathbf{U}}\right)^\top\hat{\bf F}_i^{(3)}-\hat{q}_i^{(3)} &= \frac{\gamma {\bf v}^\top}{\theta}\left(\gamma^2(1+e'(\theta))v_i\mathbf{v}+\mathbf{e}_i\right)-\frac{\gamma^3}{\theta}(1+e'(\theta))v_i\\
        &=-\frac{\gamma^3v_i(1+e'(\theta))}{\theta}(1-|\mathbf{v}|^2)+\frac{\gamma\mathbf{v}^\top\mathbf{e}_i}{\theta}+\frac{\gamma e'(\theta)v_i}{\theta}\\
        &=0,
    \end{align*}
    which imply  
    \begin{equation}\label{VUlast}
        \left(\frac{\partial \eta}{\partial\mathbf{U}}\right)^\top\frac{\partial\mathbf{F}_i}{\partial\mathbf{V}}=\left(\frac{\partial q_i}{\partial\mathbf{V}}\right)^\top,\qquad i = 1,\cdots,d.
    \end{equation}
    Multiplying both sides of the equations \eqref{VUlast} by $\frac{\partial\mathbf{V}}{\partial\mathbf{U}}$ from right,  we obtain \eqref{mathEntropyDef}, which indicates that $(\eta(\mathbf{U}),\mathbf{q}(\mathbf{U}))$ forms an entropy pair. The proof is completed. 
\end{proof}

As direct consequences of Theorem \ref{thm:entropypair}, we have the following remarks for four specific EOSs.

\begin{remark1}[ID-EOS]
	For the ideal EOS \eqref{ID-EOS}, we have 
	\begin{equation} \label{IDe}
		e(\theta) = \frac{\theta}{\Gamma-1}.
	\end{equation}
	Then by \eqref{S_Def}, we obtain 
	\begin{equation} \label{IDS}
		S_{ID}:=-\ln{\rho}+\frac{1}{\Gamma-1}\int\frac{1}{\theta}d\theta=-\ln{\rho}+\frac{1}{\Gamma-1}\ln{\theta}=\frac{1}{\Gamma-1}\ln{\frac{p}{\rho^\Gamma}}.
	\end{equation}
	Our entropy pair $(-DS_{ID},-DS_{ID}\mathbf{v})$ for the RHD system \eqref{eq:RHD} with the ideal EOS is consistent with the result in \cite{duan2019high}.
\end{remark1} 

\begin{remark1}[RC-EOS]
For the RC-EOS \eqref{hEOS1}, we have 
\begin{equation} \label{RCe}
	e(\theta) = \frac{3\theta(3\theta+1)}{3\theta+2}.
\end{equation}
Then by \eqref{S_Def}, we obtain 
\begin{equation*}
	\begin{aligned}
		S_{RC}&=-\ln{\rho}+\int \left(\frac{3}{2\theta}+\frac{9}{2(3\theta+2)}+\frac{9}{(3\theta+2)^2}\right)d\theta\\
		&=-\ln{\rho}+\frac{3}{2}\ln{\theta}+\frac{3}{2}\ln{(3\theta+2)}-\frac{3}{3\theta+2}.
	\end{aligned}
\end{equation*}
Hence $(-DS_{RC},-DS_{RC}\mathbf{v})$ forms an entropy pair for the RHD system \eqref{eq:RHD} with the RC-EOS \eqref{hEOS1}. 
\end{remark1}

\begin{remark1}[IP-EOS]
	For the {IP-EOS} \eqref{hEOS2}, we have 
	\begin{equation} \label{IPe}
		e(\theta) = \theta-1+\sqrt{1+4\theta^2}.
	\end{equation}
	Then by \eqref{S_Def}, we obtain 
	\begin{equation*}
		\begin{aligned}
			S_{IP}&=-\ln{\rho}+\int \left(\frac{1}{\theta}+\frac{4}{\sqrt{1+4\theta^2}}\right)d\theta\\
			&=-\ln{\rho}+\ln{\theta}+2\ln{\left(2\theta+\sqrt{1+4\theta^2}\right)}.
		\end{aligned}
	\end{equation*}
	Hence $(-DS_{IP},-DS_{IP}\mathbf{v})$ forms an entropy pair for the RHD system \eqref{eq:RHD} with the IP-EOS \eqref{hEOS2}. 
\end{remark1}

\begin{remark1}[TM-EOS]
	For the {TM-EOS} \eqref{hEOS3}, we have 
	\begin{equation} \label{TMe}
		e(\theta) = \frac{3}{2}\theta-1+\sqrt{1+\frac{9}{4}\theta^2}.
	\end{equation}
	Then by \eqref{S_Def}, we obtain 
	\begin{equation*}
		\begin{aligned}
			{S}_{TM}&=-\ln{\rho}+\int \left(\frac{3}{2}+\frac{\frac{9}{4}\theta}{\sqrt{1+\frac{9}{4}\theta^2}}\right)d\theta\\
			&=-\ln{\rho}+\frac{3}{2}\ln{\theta}+\frac{3}{2}\ln{\left(\frac{3}{2}\theta+\sqrt{1+\frac{9}{4}\theta^2}\right)}.
		\end{aligned}
	\end{equation*}
	Hence $(-DS_{TM},-DS_{TM}\mathbf{v})$ forms an entropy pair for the RHD system \eqref{eq:RHD} with the TM-EOS \eqref{hEOS3}. 
\end{remark1}

\subsection{Convexity of entropy function}
In this subsection, we show the convexity of the entropy function $\eta({\bf U})$ defined in \eqref{entropyFunDef}. 

\begin{theorem}\label{thm:strict convex entropy -DS}
	For the RHD system \eqref{eq:RHD} with general Synge-type EOS \eqref{eq:gEOS} satisfying the condition \eqref{assumption:1_theta}, 
	the entropy function $\eta({\bf U})$ defined in \eqref{entropyFunDef} is strictly convex with respect to the conservative variables  $\mathbf{U}$, provided that $\mathbf{U}\in \mathcal{G}$ with
	$\mathcal{G} = \{\mathbf{U} = (D,\mathbf{m},E)^\top: \rho > 0,\ \theta > 0\}.$
\end{theorem}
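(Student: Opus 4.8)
The plan is to show that the Hessian $\frac{\partial^2\eta}{\partial\mathbf{U}^2}$ of $\eta$ defined in \eqref{entropyFunDef} is symmetric positive definite on $\mathcal{G}$. Since the entropy variables $\mathbf{w}:=\frac{\partial\eta}{\partial\mathbf{U}}$ are already available in closed form from \eqref{generalEntropyVar:cal}, the Hessian equals the Jacobian $\frac{\partial\mathbf{w}}{\partial\mathbf{U}}$, which is automatically symmetric. The obstruction stressed in the paper is that $\mathbf{w}$ and $\mathbf{U}$ cannot be expressed explicitly in terms of one another, so I would never differentiate $\mathbf{w}$ with respect to $\mathbf{U}$ directly. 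Instead, both $\mathbf{w}$ and $\mathbf{U}$ are explicit in the primitive variables $\mathbf{V}=(\rho,\mathbf{v},p)^\top$ (equivalently in $(\rho,\mathbf{v},\theta)$ via $p=\rho\theta$), so I would route the whole argument through $\mathbf{V}$. Because $\frac{\partial\mathbf{U}}{\partial\mathbf{V}}$ in \eqref{U_V} is invertible on $\mathcal{G}$, the chain rule gives $\frac{\partial\mathbf{w}}{\partial\mathbf{U}}=\frac{\partial\mathbf{w}}{\partial\mathbf{V}}\left(\frac{\partial\mathbf{U}}{\partial\mathbf{V}}\right)^{-1}$, and hence
\[
\left(\frac{\partial\mathbf{U}}{\partial\mathbf{V}}\right)^\top\frac{\partial^2\eta}{\partial\mathbf{U}^2}\,\frac{\partial\mathbf{U}}{\partial\mathbf{V}}=\left(\frac{\partial\mathbf{U}}{\partial\mathbf{V}}\right)^\top\frac{\partial\mathbf{w}}{\partial\mathbf{V}}=:\mathbf{B}.
\]
Since the left factor is invertible, this is a congruence, so $\frac{\partial^2\eta}{\partial\mathbf{U}^2}\succ 0$ if and only if the symmetric matrix $\mathbf{B}$ is positive definite. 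This replaces the impossible differentiation by two Jacobians I can actually compute in $\mathbf{V}$: the known $\frac{\partial\mathbf{U}}{\partial\mathbf{V}}$, and $\frac{\partial\mathbf{w}}{\partial\mathbf{V}}$ obtained by differentiating \eqref{generalEntropyVar:cal}.

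Next I would exploit rotational invariance. The scalar $\eta=-DS$ is invariant under spatial rotations $R\in SO(d)$, which act on $\mathbf{U}$ through the block-orthogonal map $\mathrm{diag}(1,R,1)$; consequently $\frac{\partial^2\eta}{\partial\mathbf{U}^2}$ at a state with velocity $\mathbf{v}$ is orthogonally congruent to the same Hessian at the state with rotated velocity. Orthogonal congruence preserves definiteness, so without loss of generality I may align the velocity with the first axis, $\mathbf{v}=(v,0,\dots,0)^\top$ with $v=|\mathbf{v}|$. This collapses every $\mathbf{v}\mathbf{v}^\top$ and $\mathbf{v}\mathbf{e}_i^\top$ contribution and renders $\mathbf{B}$ block structured: the $d-1$ transverse momentum components should decouple from the $(\rho,v,\theta)$ directions, reducing the problem to an explicit $3\times 3$ core plus a diagonal transverse part.

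I would then establish $\mathbf{B}\succ 0$ by Sylvester's criterion (leading principal minors), or equivalently by a Schur-complement factorization of the $3\times 3$ core, checking that every pivot is strictly positive. I expect each pivot to factor into quantities such as $\rho$, $\gamma$, $e'(\theta)$, $\theta$, and the combination $\delta_\theta=he'(\theta)-\theta(1+e'(\theta))|\mathbf{v}|^2$ appearing in \eqref{V_U}, and this is exactly where the causality hypothesis \eqref{assumption:1_theta} enters: $e'(\theta)>0$ together with $h(\theta)>\theta(1+e'(\theta))/e'(\theta)$ forces $\delta_\theta>0$ and keeps the remaining Schur complements strictly positive, while $\rho,\theta>0$ on $\mathcal{G}$ handle the rest. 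An equivalent and possibly cleaner route is to write the quadratic form $d\mathbf{U}^\top\frac{\partial^2\eta}{\partial\mathbf{U}^2}\,d\mathbf{U}=d\mathbf{w}^\top d\mathbf{U}$ directly as a quadratic form in $(d\rho,d\mathbf{v},d\theta)$ and complete it into a sum of squares with manifestly positive coefficients controlled by \eqref{assumption:1_theta}.

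I expect the main obstacle to be the explicit algebra of $\frac{\partial\mathbf{w}}{\partial\mathbf{V}}$ and the resulting entries of $\mathbf{B}$ rather than any single conceptual leap: the entropy variable $\frac{1}{\theta}(h-\theta S)$ couples $\rho$ and $\theta$ through both $S$ and $h(\theta)$, and the Lorentz factor $\gamma$ makes the velocity block nonlinear, so isolating the clean factorization and verifying that each minor carries precisely the sign dictated by the causality condition will demand careful bookkeeping. The rotational reduction and the congruence identity are the two ingredients that make this bookkeeping tractable.
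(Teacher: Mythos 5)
Your strategy is sound and would yield a correct proof, but it follows a genuinely different route from the paper's. The paper computes the Hessian $\eta_{\mathbf{UU}}$ explicitly in conservative variables — which forces it to invert $\frac{\partial\mathbf{U}}{\partial\mathbf{V}}$ and carry the resulting $1/\delta_\theta$ factors through all entries — and then block-diagonalizes by two explicit congruences $\mathbf{P}_1,\mathbf{P}_2$, finishing with the eigenvalues of a rank-one update $\Delta(1-|\mathbf{v}|^2)\mathbf{I}_d+(\delta_3-\delta_1^2/\delta_2)\mathbf{v}\mathbf{v}^\top$ valid for arbitrary velocity direction. You instead test definiteness of the congruent matrix $\mathbf{B}=\bigl(\frac{\partial\mathbf{U}}{\partial\mathbf{V}}\bigr)^\top\frac{\partial\mathbf{w}}{\partial\mathbf{V}}$, which avoids the matrix inversion entirely, and you add a rotational-invariance reduction to $\mathbf{v}=(v,0,\dots,0)^\top$ that the paper does not use; both reductions are legitimate (the invariance $\eta(T\mathbf{U})=\eta(\mathbf{U})$ for $T=\mathrm{diag}(1,R,1)$ orthogonal does give orthogonal congruence of Hessians, and invertibility of $\frac{\partial\mathbf{U}}{\partial\mathbf{V}}$ on $\mathcal{G}$ follows from $\delta_\theta>0$ under \eqref{assumption:1_theta}). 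What your route buys is lighter algebra: no inverse Jacobian, a $3\times3$ core plus a diagonal transverse block whose entries are positive precisely because $\delta_\theta=he'(\theta)-\theta(1+e'(\theta))|\mathbf{v}|^2>0$. What it does not yet deliver is the decisive computation: you have deferred the evaluation of the pivots of $\mathbf{B}$, and this is where the real work sits. In the paper the analogous step is the inequality $\Delta=\rho h^2-\rho\theta h(2+|\mathbf{v}|^2)+\rho\theta^2(1+e'(\theta))>\frac{\rho h}{h-\theta}(h-2\theta)^2\ge0$, obtained by feeding $1+e'(\theta)>\frac{h}{h-\theta}$ (a restatement of \eqref{assumption:1_theta}) into the quadratic in $h$; expect your leading minors to reduce to combinations of exactly this $\Delta$ and of $\delta_\theta$, so the causality condition will enter your pivots in the same way. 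Until those minors are written out and their signs checked, the proof is a credible plan rather than a complete argument.
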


\begin{proof}
	To show the convexity of the entropy function $\eta$, it suffices to verify the positive definiteness of 
	the Hessian matrix of the entropy function $\eta$, which can be written as
		\begin{equation*}
			\eta_{\mathbf{U}\mathbf{U}} = \frac{\partial^2 \eta}{\partial \mathbf{U}^2}= -(\mathbf{e}_1S_{\mathbf{U}}^\top+S_{\mathbf{U}}\mathbf{e}_1^\top+DS_{\mathbf{U}\mathbf{U}}),
		\end{equation*}
		where $\mathbf{e}_1=(1,\mathbf{0}_{d+1}^\top)^\top$, and $\mathbf{0}_{d+1}$ is the $(d+1)\times 1$ zero vector.
		As $S$ can be explicitly expressed by $\mathbf{V}$ but not $\mathbf{U}$, we can calculate $S_{\mathbf{U}}$ and $S_{\mathbf{U}\mathbf{U}}$ following the chain rule
		\begin{equation} \label{S_U chain rule}
			S_{\mathbf{U}}^\top = \left(\frac{\partial S}{\partial \mathbf{V}}\right)^\top\frac{\partial\mathbf{V}}{\partial \mathbf{U}},\qquad
			S_{\mathbf{UU}}=\frac{\partial^2 S}{\partial \mathbf{U}^2} = \frac{\partial^2 S}{\partial \mathbf{V}\partial\mathbf{U}}\frac{\partial\mathbf{V}}{\partial \mathbf{U}}=S_{\mathbf{UV}}\frac{\partial\mathbf{V}}{\partial \mathbf{U}}.
		\end{equation}
		Due to the explicit relation between $S$ and $\mathbf{V}$, we can derive $\frac{\partial S}{\partial \mathbf{V}}$ directly as
		\begin{equation} \label{S_V}
			\frac{\partial S}{\partial \mathbf{V}}=\begin{pmatrix}
				-\frac{1+e'(\theta)}{\rho} & \mathbf{0}_d^\top & \frac{e'(\theta)}{\rho\theta}
			\end{pmatrix}^\top.
		\end{equation}
		Combining  \eqref{S_V} with \eqref{V_U}, we obtain 
		\begin{equation} \label{S_U}
			S_{\mathbf{U}}=\frac{1}{\rho\theta}\begin{pmatrix}
				-h\gamma^{-1} & -\mathbf{v}^\top & 1
			\end{pmatrix}^\top.
		\end{equation}
		The derivative of $S_{\mathbf{U}}$ with respect to $\mathbf{V}$ gives
		\begin{equation} \label{S_UV}
			S_{\mathbf{UV}}=\begin{pmatrix}
				\frac{1+e'(\theta)}{\rho^2\gamma} & \frac{h\gamma}{\rho\theta}\mathbf{v}^\top & \frac{h-\theta(1+e'(\theta))}{\rho^2\theta^2\gamma} \\
				\mathbf{0}_d & -\frac{1}{\rho\theta}\mathbf{I}_d & \frac{1}{\rho^2\theta^2}\mathbf{v}\\
				0 & \mathbf{0}_d^\top & -\frac{1}{\rho^2\theta^2}
			\end{pmatrix},
		\end{equation}
		where $\mathbf{I}_d$ denotes the $d\times d$ identity matrix.
		Therefore, we have
		\begin{equation*}
			\eta_{\mathbf{UU}}=-\left(\mathbf{e}_1S_{\mathbf{U}}^\top+S_{\mathbf{U}}\mathbf{e}_1^\top+DS_{\mathbf{U}\mathbf{V}}\frac{\partial \mathbf{V}}{\partial \mathbf{U}}\right)
			= \frac{1}{\rho^2 \theta^3 \left(1+e'(\theta)\right)\ \left(\frac{1}{c_s^2} - |\mathbf{v}|^2\right)}
			\begin{pmatrix}
				a_1 & a_2\mathbf{v}^\top & a_3 \\ \\
				a_2\mathbf{v} & \mathbf{A}_1 & a_4\mathbf{v} \\ \\
				a_3 & a_4\mathbf{v}^\top & a_5 
			\end{pmatrix},
	\end{equation*}
	where $c_s$ denotes the sound speed with 
	\begin{equation} \label{cs_theta}
    	c_s^2 =  \frac{\theta(1+e'(\theta))}{he'(\theta)} \in (0,1)
	\end{equation} under the condition \eqref{assumption:1_theta}, 
	and 
\begin{equation*}
	a_1 := h\gamma^{-1} \Delta,\quad
	\Delta := \rho h^2-\rho\theta h\left(2+|\mathbf{v}|^2\right)+\rho\theta^2\left(1+e'(\theta)\right),
\end{equation*}
\begin{equation*} 
a_2 := \rho \Big(h^2-\theta h|\mathbf{v}|^2-\theta^2\left(1+e'(\theta)\right)\Big),\quad
a_3 := -\rho\Big(h^2-\theta h-\theta^2\left(1+e'(\theta)\right)|\mathbf{v}|^2\Big),
\end{equation*}
\begin{equation*}
\mathbf{A}_1:=\frac{\rho}{h}\left(\frac{\theta\left(-h+\left(1+e'(\theta)\right)\left(h-\theta|\mathbf{v}|^2\right)\right)}{\gamma}\mathbf{I}_d+\gamma \left(h+\theta\left(1+e'(\theta)\right)\right)\left(h+\theta\left(1-|\mathbf{v}|^2\right)\right)\mathbf{v}\mathbf{v}^\top\right),
\end{equation*}
\begin{equation*} 
a_4 := -\rho \gamma \left(h+\theta\left(1+e'(\theta)\right)\right),\quad 
a_5 := \rho \gamma \left(h+\theta\left(1+e'(\theta)\right)|\mathbf{v}|^2\right).
\end{equation*}
We observe that $\Delta >0$ and $a_1>0$, because 
\begin{equation*}\label{A11lessthan0:2}
	\begin{aligned}
		\Delta &= \rho h^2-\rho\theta h\left(2+|\mathbf{v}|^2\right)+\rho\theta^2\left(1+e'(\theta)\right)\\
		& > \rho h^2-3\rho\theta h+\rho\theta^2\left(1+e'(\theta)\right)\\
		& > \rho h^2-3\rho\theta h+\rho\theta^2\frac{h}{h-\theta}\\
		&=\frac{\rho h}{h-\theta}\left(h-2\theta\right)^2\ge 0,
	\end{aligned}
\end{equation*}
where the condition \eqref{assumption:1_theta} has been used. 
Let us define the invertible matrix
\begin{equation*}
{\bf P}_1:= 
\begin{pmatrix}
	1 & \mathbf{0}_d^\top & 0 \\
	-\frac{a_2}{a_1}\mathbf{v} & \mathbf{I}_d & \mathbf{0}_d \\
	-\frac{a_3}{a_1} & \mathbf{0}_d^\top & 1 
\end{pmatrix}.
\end{equation*}
Then we have 
\begin{equation*}
{\bf P}_1 \eta_{\mathbf{UU}} {\bf P}_1^\top =  \frac{1}{\rho^2 \theta^3 \left(1+e'(\theta)\right)\ \left(\frac{1}{c_s^2} - |\mathbf{v}|^2\right)}
	\begin{pmatrix}
		a_1 & \mathbf{0}_{d+1}^\top \\
		\mathbf{0}_{d+1} & \delta\mathbf{A}_2
	\end{pmatrix}
\end{equation*}
with
\begin{equation*}
	\delta := \frac{\rho\theta^2\left(1+e'(\theta)\right)}{a_1}\left(\frac{1}{c_s^2}- |\mathbf{v}|^2\right)>0,\quad {\rm and} \quad
	\mathbf{A}_2:=  \begin{pmatrix}
		\Delta\left(1-|\mathbf{v}|^2\right)\mathbf{I}_d+\delta_3\mathbf{v}\mathbf{v}^\top & -\delta_1\mathbf{v} \\
		-\delta_1\mathbf{v}^\top & \delta_2 
	\end{pmatrix},
\end{equation*}
where 
\begin{equation}\label{delta123}
\begin{aligned}
	\delta_1&:=\rho\Big(h^2+\theta h|\mathbf{v}|^2+\theta^2\left(1+e'(\theta)\right)\Big)=\Delta+2\rho\theta h(1+|\mathbf{v}|^2)>0,\quad\\
	\delta_2&:=\rho\Big(h^2|\mathbf{v}|^2+\theta h+\theta^2\left(1+e'(\theta)\right)|\mathbf{v}|^2\Big)=\Delta|\mathbf{v}|^2+\rho\theta h(1+|\mathbf{v}|^2)^2>0,\quad\\
	\delta_3&:=\rho\Big(h^2+\left(2-|\mathbf{v}|^2\right)\theta h+\theta^2\left(1+e'(\theta)\right)\Big)=\Delta+4\rho\theta h>0.
\end{aligned}
\end{equation}
Let us study the matrix $\mathbf{A}_2$. 
We consider
\begin{equation*}
{\bf P}_2:= 
\begin{pmatrix}
	\mathbf{I}_d & \frac{\delta_1}{\delta_2}\mathbf{v} \\
	\mathbf{0}_d^\top & 1 
\end{pmatrix},
\end{equation*}
and we have
\begin{equation*}
    \mathbf{P}_2\mathbf{A}_2  \mathbf{P}_2^\top = 
    \begin{pmatrix}
        \mathbf{B}-\frac{\delta_1^2}{\delta_2}\mathbf{v}\mathbf{v}^\top & \mathbf{0}_d\\
        \mathbf{0}_d^\top & \delta_2
    \end{pmatrix}.
\end{equation*}
Note that the eigenvalues of
\begin{equation*}
    \mathbf{C}:=\mathbf{B}-\frac{\delta_1^2}{\delta_2}\mathbf{v}\mathbf{v}^\top = 
    \Delta\left(1-|\mathbf{v}|^2\right)\mathbf{I}_d+\left(\delta_3-\frac{\delta_1^2}{\delta_2}\right)\mathbf{v}\mathbf{v}^\top
\end{equation*}
are
\begin{align*}
\lambda_{\mathbf{C}}^{(1)} & = \Delta+\left(\delta_3-\frac{\delta_1^2}{\delta_2}-\Delta\right)|\mathbf{v}|^2 = \frac{1}{\delta_2} \rho\theta h\left(1-|\mathbf{v}|^2\right)^{2}\Delta > 0,
\\
 \lambda_{\mathbf{C}}^{(2)}&=\cdots=\lambda_{\mathbf{C}}^{(d)}=\Delta\left(1-\Vert\mathbf{v}\Vert^2\right) >0,\quad \mbox{if}~d\ge 2.
\end{align*}
This implies that the matrix $\mathbf{C}$ is positive definite, yielding that $\mathbf{P}_2\mathbf{A}_2  \mathbf{P}_2^\top$ is also positive definite. 
Hence, the matrix $\mathbf{A}_2$ is positive definite, implying ${\bf P}_1\eta_{\mathbf{UU}} {\bf P}_1^\top$ is also positive definite. Since ${\bf P}_1\eta_{\mathbf{UU}} {\bf P}_1^\top$ and $\eta_{\mathbf{UU}}$ are congruent, the Hessian matrix $\eta_{\mathbf{UU}}$ is positive definite. The proof is completed.
\end{proof}

\subsection{Entropy variables}

In this subsection, we derive the entropy variables corresponding to the convex entropy $\eta$, which will be useful for constructing the ES schemes.

\begin{theorem} \label{lem:generalEOS:entropyVarPot}
	The entropy variables $\bf W$ associated with the entropy function $\eta$ defined in \eqref{entropyFunDef} 
	are given by 
	\begin{equation} \label{generalEntropyVar}
		\mathbf{W}=\frac1{\theta}
		(h-\theta S, \gamma {\bf v}^\top, -\gamma )^\top,
	\end{equation}
	and the associated entropy potential fluxes are 
	\begin{equation} \label{generalEntropyPot}
		\psi_i := \rho\gamma v_i, \quad i = 1,\cdots,d,
	\end{equation}
	where $h$ represents the specific enthalpy, and $S$ is defined in \eqref{S_Def}.
\end{theorem}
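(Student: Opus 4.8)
The plan is to handle the two assertions separately, since they are of quite different character. For the entropy variables, I would invoke the standard definition that the entropy variables are the gradient of the entropy with respect to the conservative variables, $\mathbf{W} = \partial\eta/\partial\mathbf{U}$. This gradient has in fact already been computed in the course of proving Theorem \ref{thm:entropypair}: equation \eqref{generalEntropyVar:cal} gives precisely $\partial\eta/\partial\mathbf{U} = \frac1\theta(h-\theta S,\gamma\mathbf{v}^\top,-\gamma)^\top$, which is the asserted formula \eqref{generalEntropyVar}. Hence the first half of the theorem needs no new computation beyond quoting that identity. The only point worth emphasizing is that the strict convexity of $\eta$ established in Theorem \ref{thm:strict convex entropy -DS} guarantees that $\mathbf{W}$ is a genuine (invertible) change of variables symmetrizing \eqref{eq:RHD}, so that calling $\mathbf{W}$ the entropy variables is legitimate.

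For the entropy potential fluxes, I would start from the defining relation $\psi_i = \mathbf{W}^\top\mathbf{F}_i - q_i$ for $i=1,\dots,d$, i.e.\ the Legendre-type potential whose gradient in $\mathbf{W}$ reproduces $\mathbf{F}_i$ and which is the key ingredient for the later two-point entropy conservative flux construction. Every factor here is explicit in the primitive variables: $\mathbf{W}$ from \eqref{generalEntropyVar}, the flux $\mathbf{F}_i$ from \eqref{Fi}, and the entropy flux $q_i=-DS v_i$ from \eqref{entropyFunDef}. The verification is therefore a direct substitution. Writing $D=\rho\gamma$, $\mathbf{m}=\rho h\gamma^2\mathbf{v}$, $m_i=\rho h\gamma^2 v_i$, and $p=\rho\theta$, I would expand $\mathbf{W}^\top\mathbf{F}_i$ term by term and factor out the common $\rho\gamma v_i$.

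The one algebraic step requiring care is the collapse of the enthalpy contributions: after factoring, the bracketed expression contains $h\gamma^2|\mathbf{v}|^2 - h\gamma^2$, and the Lorentz identity $\gamma^2(1-|\mathbf{v}|^2)=1$ reduces this to $-h$, so the bracket simplifies to $\theta(1-S)$ and hence $\mathbf{W}^\top\mathbf{F}_i = \rho\gamma v_i(1-S)$. Subtracting $q_i = -\rho\gamma S v_i$ then cancels the $S$-dependence exactly and leaves $\psi_i = \rho\gamma v_i$, which is the claimed formula \eqref{generalEntropyPot}. I expect this cancellation of the specific entropy $S$ to be the only non-mechanical point of the proof; everything else is routine bookkeeping in the primitive variables, entirely analogous to the chain-rule strategy already exploited in the proof of Theorem \ref{thm:entropypair}.
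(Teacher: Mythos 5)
Your proposal is correct and follows essentially the same route as the paper: the entropy variables are obtained by simply quoting the gradient $\partial\eta/\partial\mathbf{U}$ already computed in \eqref{generalEntropyVar:cal}, and the potential flux is verified by direct substitution into $\psi_i=\mathbf{W}^\top\mathbf{F}_i-q_i$ using \eqref{Fi} and \eqref{entropyFunDef}, with the Lorentz identity $\gamma^2(1-|\mathbf{v}|^2)=1$ producing the same cancellation of the $h/\theta$ terms and of $S$. Your intermediate identity $\mathbf{W}^\top\mathbf{F}_i=\rho\gamma v_i(1-S)$ checks out, so no gap remains.
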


\begin{proof}
	By definition, the entropy variable $\textbf{W}$ is the gradient of the entropy function $\eta$ with respect to the conservative vector $\textbf{U}$, which we have obtained in \eqref{generalEntropyVar:cal}. The potential flux $\psi_i$ is defined as $\psi_i:=\textbf{W}^\top\textbf{F}_i-q_i$, which can be calculated by using \eqref{Fi} and \eqref{entropyFunDef} as follows: 
	\begin{align*}
	    \psi_i &=\frac{h-\theta S}{\theta}Dv_i+\frac{\gamma\mathbf{v}^\top}{\theta}(v_i\mathbf{m}+p\mathbf{e}_i)-\frac{\gamma}{\theta}\mathbf{m}_i+DSv_i\\
	    &=\frac{h}{\theta}\rho\gamma v_i+\frac{\gamma v_i}{\theta}\sum\limits_{i=1}^d\rho h\gamma^2v_i^2+\rho\gamma v_i-\frac{\rho h\gamma^3v_i}{\theta}\\
	    &=\frac{h}{\theta}\rho\gamma v_i+\frac{h}{\theta}\rho\gamma^3 v_i|\mathbf{v}|^2+\rho\gamma v_i-\frac{h}{\theta}\rho h\gamma^3v_i\\
	    &=\rho\gamma v_i, \qquad i = 1,\cdots,d.
	\end{align*}
\end{proof}

\section{1D entropy stable schemes}
\label{section:3}

In this section, we construct the ES schemes for the 1D RHD equations. 

\subsection{Two-point entropy conservative flux}
We first derive the unified formula of two-point EC numerical flux for the 1D RHD system with general Synge-type EOS \eqref{eq:gEOS}. 

\begin{definition}[\cite{tadmor2003entropy}]\label{def:EC}
	A consistent two-point numerical flux $\widetilde{\mathbf{F}}_i^{EC} ( {\mathbf U}_L, {\mathbf U}_R )$ is EC if
	\begin{equation} \label{ECdef}
		( {\mathbf W}_R - {\mathbf W}_L )^\top\widetilde{\textbf{F}}^{EC}_i ( {\mathbf U}_L, {\mathbf U}_R ) = \psi_{i,R} - \psi_{i,L}, \qquad i=1,\dots,d,
	\end{equation}
	where the entropy variables $\textbf{W}$ are defined in \eqref{generalEntropyVar}, and $\psi_i$ is the potential flux defined in \eqref{generalEntropyPot}. The subscript $L$ and $R$ means that the quantities are associated with the ``left'' state ${\mathbf U}_L$ and the ``right'' state ${\mathbf U}_R$, respectively. 
\end{definition}

For convenience, we introduce some notations. The jump and the arithmetic average of a quantity $a$ across a cell interface are denoted by
\begin{equation} \label{jump}
    [\![a]\!]:=a_{R}-a_{L},
\end{equation}
and
\begin{equation} \label{arithmeticAVG}
    \{\!\{a\}\!\}:=\frac{a_{R}+a_{L}}{2},
\end{equation}
respectively. 
Based on these notations, we have the following useful formulas
	\begin{align}
		\relax[\![ab]\!]&=\{\!\{a\}\!\}[\![b]\!]+[\![a]\!]\{\!\{b\}\!\},\label{product}\\
		[\![a^2]\!]&=2\{\!\{a\}\!\}[\![a]\!],\label{square}\\
		\left[\!\left[\frac{a}{b}\right]\!\right]&=\frac{[\![a]\!]-[\![b]\!]\left\{\!\left\{\frac{a}{b}\right\}\!\right\}}{\{\!\{b\}\!\}},\label{division}\\
		\left[\!\left[\sqrt{a}\right]\!\right]&=\frac{[\![a]\!]}{2\left\{\!\left\{\sqrt{a}\right\}\!\right\}}.\label{sqrt}
\end{align}
We will also employ the logarithmic mean 
\begin{equation} 
	\{\!\{a\}\!\}_{ln}=\frac{[\![a]\!]}{\left[\!\left[\ln{a}\right]\!\right]}\label{logAVG}.
\end{equation}
which was proposed in \cite{ismail2009affordable}.

 To design a simple two-point EC numerical flux, we choose a set of  variables $\textbf{z}=(z_1,z_2,z_3)^\top$ as
\begin{equation} \label{1DalgeVar}
    z_1 = \rho, \quad z_2 = \frac{\rho}{p}, \quad z_3 = \gamma v_1
\end{equation}
following \cite{wu2020entropy}. 
After careful investigation, we find  
a unified simple two-point EC flux \eqref{1D2ptECFlux} for the 1D RHD system with general Synge-type EOS \eqref{eq:gEOS}.

\begin{theorem} \label{thm:ECflux}
The two-point EC numerical flux for the 1D RHD system with general Synge-type EOS \eqref{eq:gEOS} can be written into a unified form as 
\begin{equation} \label{1D2ptECFlux}
    \widetilde{\textbf{F}}^{EC}_1 ( {\mathbf U}_L, {\mathbf U}_R )=\left(\{\!\{z_1\}\!\}_{\ln}\{\!\{z_3\}\!\},\ \widehat{\rho h}\{\!\{z_3\}\!\}^2+\frac{\{\!\{z_1\}\!\}}{\{\!\{z_2\}\!\}},\ \widehat{\rho h}\{\!\{\gamma\}\!\}\{\!\{z_3\}\!\}\right)^\top
\end{equation}
with 
\begin{align}\nonumber
	\widehat{\rho h}&:=\frac{\frac{\{\!\{z_1\}\!\}}{\{\!\{z_2\}\!\}}+\{\!\{z_1\}\!\}_{\ln}{\mathcal E}}{\{\!\{\gamma\}\!\}^2-\{\!\{z_3\}\!\}^2}, 
    \\ \label{1.8}
	{\mathcal E} &:= \frac{[\![W_1]\!]-[\![\ln z_1]\!]}{[\![z_2]\!]}. 
\end{align}
Here, ${\mathcal E}$ can be reformulated as 
\begin{equation} \label{1.8:int}
    {\mathcal E} = 1+ \int_0^1e\left(\frac{1}{z_{2,L}+s(z_{2,R}-z_{2,L})}\right)ds,
\end{equation}
where $z_{2,L}$ and $z_{2,R}$ represent the ``left'' and ``right'' states of the parameter variable $z_2$ chosen in \eqref{1DalgeVar}, respectively, and the explicit calculation of ${\mathcal E}$ depends on the particular choice of the EOS and will be given in Theorem \ref{thm:1De}. 
\end{theorem}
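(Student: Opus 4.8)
The plan is to verify the defining identity \eqref{ECdef} for the single space direction, namely $[\![\mathbf{W}]\!]^\top\widetilde{\mathbf{F}}_1^{EC}=[\![\psi_1]\!]$, by transcribing everything into the parameter variables $\mathbf{z}=(z_1,z_2,z_3)^\top$ of \eqref{1DalgeVar}; consistency of the flux is a routine limit check and the heart of the matter is this entropy identity. Since in 1D one has $\theta=p/\rho=1/z_2$ and $\gamma^2=1+z_3^2$, the dictionary is immediate: from \eqref{generalEntropyVar}, $W_2=z_2 z_3$ and $W_3=-z_2\gamma=-z_2\sqrt{1+z_3^2}$, while from \eqref{generalEntropyPot} the potential flux is simply $\psi_1=\rho\gamma v_1=z_1 z_3$. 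The only component mixing thermodynamics and the EOS is $W_1=\frac{h}{\theta}-S$, and the crucial observation that makes the parameter choice \eqref{1DalgeVar} effective is that, using \eqref{S_Def}, the combination $W_1-\ln z_1=\frac{h(\theta)}{\theta}-\int^{\theta}\frac{e'(x)}{x}\,dx=:g(\theta)$ depends on $\mathbf{z}$ through $z_2$ alone.

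The second step is to expand the four relevant jumps. The product rule \eqref{product} gives $[\![W_2]\!]=\{\!\{z_2\}\!\}[\![z_3]\!]+[\![z_2]\!]\{\!\{z_3\}\!\}$, $[\![\psi_1]\!]=\{\!\{z_1\}\!\}[\![z_3]\!]+[\![z_1]\!]\{\!\{z_3\}\!\}$, and $[\![W_3]\!]=-\{\!\{z_2\}\!\}[\![\gamma]\!]-[\![z_2]\!]\{\!\{\gamma\}\!\}$. For the coupled component I would write $[\![W_1]\!]=[\![g]\!]+[\![\ln z_1]\!]$ and recognize the scalar $\mathcal{E}=[\![g]\!]/[\![z_2]\!]$ appearing in \eqref{1.8}, so that $[\![W_1]\!]=\mathcal{E}[\![z_2]\!]+[\![\ln z_1]\!]$. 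Two elementary identities then carry the bookkeeping: the logarithmic mean \eqref{logAVG} yields $\{\!\{z_1\}\!\}_{\ln}[\![\ln z_1]\!]=[\![z_1]\!]$, and $\gamma^2=1+z_3^2$ together with \eqref{square} yields $\{\!\{\gamma\}\!\}[\![\gamma]\!]=\{\!\{z_3\}\!\}[\![z_3]\!]$, the latter being precisely what permits the $[\![\gamma]\!]$-terms to be reabsorbed into $\mathbf{z}$-jumps.

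The third step is the substitution and cancellation. Inserting \eqref{1D2ptECFlux} into $[\![\mathbf{W}]\!]^\top\widetilde{\mathbf{F}}_1^{EC}$ and expanding, all contributions proportional to $\widehat{\rho h}$ coming from the momentum and energy components collapse to a single term $\widehat{\rho h}\,[\![z_2]\!]\{\!\{z_3\}\!\}\bigl(\{\!\{z_3\}\!\}^2-\{\!\{\gamma\}\!\}^2\bigr)$; here the tailored definition of $\widehat{\rho h}$ in \eqref{1.8} is arranged exactly so that $\widehat{\rho h}\bigl(\{\!\{\gamma\}\!\}^2-\{\!\{z_3\}\!\}^2\bigr)=\frac{\{\!\{z_1\}\!\}}{\{\!\{z_2\}\!\}}+\{\!\{z_1\}\!\}_{\ln}\mathcal{E}$. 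After this replacement the $\frac{\{\!\{z_1\}\!\}}{\{\!\{z_2\}\!\}}$-terms and the $\{\!\{z_1\}\!\}_{\ln}\mathcal{E}$-terms cancel pairwise against the contributions of $[\![W_1]\!]$ and $[\![W_2]\!]$, leaving exactly $\{\!\{z_1\}\!\}[\![z_3]\!]+[\![z_1]\!]\{\!\{z_3\}\!\}=[\![\psi_1]\!]$, which is \eqref{ECdef}.

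Finally, for the integral reformulation \eqref{1.8:int} I would differentiate $g$, obtaining $g'(\theta)=-\frac{1+e(\theta)}{\theta^2}$, and combine it with $\frac{d\theta}{dz_2}=-\theta^2$ to get $\frac{dg}{dz_2}=1+e(1/z_2)$; then $\mathcal{E}=[\![g]\!]/[\![z_2]\!]$ is the mean value of $dg/dz_2$ over the segment from $z_{2,L}$ to $z_{2,R}$, which becomes \eqref{1.8:int} after the substitution $z_2=z_{2,L}+s(z_{2,R}-z_{2,L})$. The main obstacle is conceptual rather than computational and lies in the second step: recognizing that the troublesome EOS-dependent entropy variable $W_1$ splits as $\ln z_1$ plus a pure function of $z_2$, so that its jump is captured by the single scalar $\mathcal{E}$ without any explicit knowledge of $e(\theta)$. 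Once this decoupling is in hand, the remainder is an algebraic cancellation orchestrated by the definition of $\widehat{\rho h}$.
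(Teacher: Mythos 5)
Your proposal is correct and follows essentially the same route as the paper: the same parameter variables \eqref{1DalgeVar}, the same jump decompositions (including the key observation that $W_1-\ln z_1$ is a function of $z_2$ alone, packaged into the single scalar $\mathcal{E}$), and the same mean-value argument yielding the integral form \eqref{1.8:int}. The only difference is one of direction --- the paper \emph{derives} the flux by collecting the coefficients of $[\![z_1]\!],[\![z_2]\!],[\![z_3]\!]$, setting each to zero, and solving the resulting linear system, whereas you \emph{verify} the stated formula by direct substitution into \eqref{ECdef} --- and the underlying algebra is identical.
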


\begin{proof}

By using the set of variables \eqref{1DalgeVar}, we can express the entropy variables and the potential flux as
\begin{equation*}
    \mathbf{W}=\left(z_2h-{S}, z_3z_2, -z_2\sqrt{1+z_3^2}\right)^\top,
\end{equation*}
\begin{equation*}
    \psi_1=z_1z_3.
\end{equation*}
Then we write the jumps of entropy variables $\textbf{W}$ and the potential flux $\psi_1$ in terms of jumps and arithmetic averages of the variables \eqref{1DalgeVar} as follows:
\begin{equation} \label{thm3:temp}
	\begin{aligned}
		\relax[\![W_1]\!] &\overset{\eqref{1.8}}{=} {\mathcal E}[\![z_2]\!]+[\![\ln{z_1}]\!],\\
		[\![W_2]\!] &= [\![z_3z_2]\!] \overset{\eqref{product}}{=} \{\!\{z_3\}\!\}[\![z_2]\!]+[\![z_3]\!]\{\!\{z_2\}\!\},\\
		[\![W_3]\!] &= \left[\!\left[-z_2\sqrt{1+z_3^2}\right]\!\right]
		\\
		& \overset{\eqref{product}}{=}-[\![z_2]\!]\{\!\{\gamma\}\!\}-\{\!\{z_2\}\!\}\left[\!\left[\sqrt{1+z_3^2}\right]\!\right]\\
		&\overset{\eqref{sqrt}}{=}-[\![z_2]\!]\{\!\{\gamma\}\!\}-\{\!\{z_2\}\!\}\frac{\left[\!\left[1+z_3^2\right]\!\right]}{2\{\!\{\gamma\}\!\}}\\
		&\overset{\eqref{square}}{=}-[\![z_2]\!]\{\!\{\gamma\}\!\}-\{\!\{z_2\}\!\}\frac{\{\!\{z_3\}\!\}[\![z_3]\!]}{\{\!\{\gamma\}\!\}},\\
		[\![\psi_1]\!] &= [\![z_1z_3]\!] \overset{\eqref{product}}{=}\{\!\{z_1\}\!\}[\![z_3]\!]+[\![z_1]\!]\{\!\{z_3\}\!\},
	\end{aligned}
\end{equation}
According to Definition \ref{def:EC}, the two-point EC numerical flux $\widetilde{\textbf{F}}^{EC}_1 ( {\mathbf U}_L, {\mathbf U}_R )=:\left(\widetilde{F}^{(1)}_{1},\widetilde{F}^{(2)}_{1},\widetilde{F}^{(3)}_{1}\right)^\top$ for the 1D RHD system satisfies
\begin{equation} \label{EClinearsystem}
    [\![W_1]\!]\widetilde{F}^{(1)}_{1}+[\![W_2]\!]\widetilde{F}^{(2)}_{1}+[\![W_3]\!]\widetilde{F}^{(3)}_{1}=[\![\psi_1]\!].
\end{equation}
Substituting \eqref{thm3:temp} into \eqref{EClinearsystem}, we obtain 
\begin{align*}
\left({\mathcal E}[\![z_2]\!]+[\![\ln{z_1}]\!]\right)\widetilde{F}^{(1)}_{1}+\left(\{\!\{z_3\}\!\}[\![z_2]\!]+[\![z_3]\!]\{\!\{z_2\}\!\}\right)\widetilde{F}^{(2)}_{1}&-\left([\![z_2]\!]\{\!\{\gamma\}\!\}+\{\!\{z_2\}\!\}\frac{\{\!\{z_3\}\!\}[\![z_3]\!]}{\{\!\{\gamma\}\!\}}\right)\widetilde{F}^{(3)}_{1}\\
&=\{\!\{z_1\}\!\}[\![z_3]\!]+[\![z_1]\!]\{\!\{z_3\}\!\}.
\end{align*}
Collecting the terms containing $[\![z_1]\!],[\![z_2]\!]$, and $[\![z_3]\!]$, respectively, the above equation can be reformulated as
\begin{align*}
\left(\frac{\widetilde{F}^{(1)}_{1}}{\{\!\{z_1\}\!\}_{\ln}}-\{\!\{z_3\}\!\}\right)[\![z_1]\!]&+\left(\mathcal{E}\widetilde{F}^{(1)}_{1}+\{\!\{z_3\}\!\}\widetilde{F}^{(2)}_{1}-\{\!\{\gamma\}\!\}\widetilde{F}^{(3)}_{1}\right)[\![z_2]\!]\\
&+\left(\{\!\{z_2\}\!\}\widetilde{F}^{(2)}_{1}-\frac{\{\!\{z_2\}\!\}\{\!\{z_3\}\!\}}{\{\!\{\gamma\}\!\}}\widetilde{F}^{(3)}_{1}-\{\!\{z_1\}\!\}\right)[\![z_3]\!]=0.
\end{align*}
Hence, the coefficients of $[\![z_1]\!],[\![z_2]\!],[\![z_3]\!]$ should all equal zero. Specifically, we have
\begin{equation*}
\left\{
\begin{aligned}
&\ \frac{\widetilde{F}^{(1)}_{1}}{\{\!\{z_1\}\!\}_{\ln}} = \{\!\{z_3\}\!\},\\
&\ \mathcal{E}\widetilde{F}^{(1)}_{1}+\{\!\{z_3\}\!\}\widetilde{F}^{(2)}_{1}-\{\!\{\gamma\}\!\}\widetilde{F}^{(3)}_{1}=0,\\
&\ \{\!\{z_2\}\!\}\widetilde{F}^{(2)}_{1}-\frac{\{\!\{z_2\}\!\}\{\!\{z_3\}\!\}}{\{\!\{\gamma\}\!\}}\widetilde{F}^{(3)}_{1}=\{\!\{z_1\}\!\}.
\end{aligned}
\right.
\end{equation*}
Solving the above equations for $\left(\widetilde{F}^{(1)}_{1},\widetilde{F}^{(2)}_{1},\widetilde{F}^{(3)}_{1}\right)^\top$, we obtain
\begin{equation*}
\left\{
\begin{aligned}
&\ \widetilde{F}^{(1)}_{1}=\{\!\{z_1\}\!\}_{\ln}\{\!\{z_3\}\!\},\\
&\ \widetilde{F}^{(2)}_{1}=\widehat{\rho h}\{\!\{z_3\}\!\}^2+\frac{\{\!\{z_1\}\!\}}{\{\!\{z_2\}\!\}},\\
&\ \widetilde{F}^{(3)}_{1}=\widehat{\rho h}\{\!\{\gamma\}\!\}\{\!\{z_3\}\!\},
\end{aligned}
\right.
\end{equation*}
which leads to \eqref{1D2ptECFlux}. Next, we verify that $\mathcal E$ defined in \eqref{1.8} can be 
reformulated as \eqref{1.8:int}. 
Using \eqref{generalEntropyVar} and \eqref{S_Def}, we recast $\mathcal{E}$ as 
 \begin{equation} \label{1.8:1}
 	\begin{aligned}
 		{\mathcal E} &= \frac{[\![W_1]\!]-[\![\ln z_1]\!]}{[\![z_2]\!]}\overset{\eqref{generalEntropyVar}}{=}
 		\frac{[\![z_2h-S]\!]-[\![\ln z_1]\!]}{[\![z_2]\!]}\\
 		&\overset{\eqref{S_Def}}{=}
 		\frac{\left[\!\left[z_2h-\int^{\frac{1}{z_2}} \frac{e'(x)}{x}dx\right]\!\right]}{[\![z_2]\!]}\overset{\eqref{eq:gEOS}}{=}:\frac{[\![\mathcal{F}(z_2)]\!]}{[\![z_2]\!]},
 	\end{aligned}
 \end{equation}
where $\mathcal{F}(z_2) := z_2h (z_2)-\int^{\frac{1}{z_2}} \frac{e'(x)}{x}dx $ is a function of the parameter variable $z_2$ with its derivative given by 
\begin{equation} \label{F_dev}
	\mathcal{F}'(z_2)=h\left(\frac{1}{z_2}\right)-\frac{1}{z_2}\overset{\eqref{eq:gEOS}}{=}1+e\left(\frac{1}{z_2}\right).
\end{equation}
Note that 
\begin{equation*}
	[\![\mathcal{F}(z_2)]\!]=\mathcal{F}(z_{2,R})-\mathcal{F}(z_{2,L})=\int_0^1 \mathcal{F}'(z_{2,L}+s(z_{2,R}-z_{2,L}))(z_{2,R}-z_{2,L})ds,
\end{equation*}
from which we can deduce that
\begin{equation*} 
	\begin{aligned}
		{\mathcal E} &= \frac{[\![\mathcal{F}(z_2)]\!]}{[\![z_2]\!]}=
		\int_0^1\mathcal{F}'(z_{2,L}+s(z_{2,R}-z_{2,L}))ds\\
		&\overset{\eqref{F_dev}}{=}1+\int_0^1e\left(\frac{1}{z_{2,L}+s(z_{2,R}-z_{2,L})}\right)ds.
	\end{aligned}
\end{equation*}
Hence, we obtain \eqref{1.8:int} and complete the proof.

\end{proof}

Theorem \ref{thm:ECflux} provides a unified formula of the two-point EC flux for 1D RHD with general Synge-type EOS.  
Note that the quantity $\mathcal E$ involved in the formula requires the evaluation of an integral $\int_0^1e\left(\frac{1}{z_{2,L}+s(z_{2,R}-z_{2,L})}\right)ds$, which depends on the specific form of the adopted EOS. 
In order to exactly achieve the EC property, this integral should be calculated exactly. 
For some EOSs, it may be difficult to explicitly express this integral, if the function $e(\theta)$ is very complicated. 
In the following, we provide an alternative way to derive the explicit forms of $\mathcal E$ for four special EOSs.

	\begin{theorem}\label{thm:1De}
		For the 1D RHD equations with ID-EOS \eqref{ID-EOS}, RC-EOS \eqref{hEOS1}, IP-EOS \eqref{hEOS2}, and TM-EOS \eqref{hEOS3}, the two-point EC numerical fluxes are all of the unified form in \eqref{1D2ptECFlux}, 
		where  ${\mathcal E}$ can be calculated as follows:  
		\begin{itemize}
			\item For ID-EOS \eqref{ID-EOS}, we have 
			\begin{equation} \label{e_hat:ideal}
				{\mathcal E}=1+\frac{1}{(\Gamma-1)\{\!\{z_2\}\!\}_{\ln}}.
			\end{equation}
			\item For RC-EOS \eqref{hEOS1}, we have 
			\begin{equation} \label{e_hat:1}
				{\mathcal E}=1+\frac{3}{\{\!\{z_2\}\!\}_{\ln}} - \frac{3}{\{\!\{2z_2+3\}\!\}_{\ln}}.
			\end{equation}
			\item For IP-EOS \eqref{hEOS2}, we have 
			\begin{equation}\label{e_hat:2}
				{\mathcal E}=\frac{1}{\{\!\{z_2\}\!\}_{\ln}}+\left\{\!\left\{\sqrt{1+\frac{4}{z_2^2}}\right\}\!\right\}-\frac{\left\{\!\left\{\frac{2}{z_2}\right\}\!\right\}}{\{\!\{z_2\}\!\}}\left(\frac{\{\!\{z_2\}\!\}\left\{\!\left\{\frac{2}{z_2}\right\}\!\right\}}{\left\{\!\left\{\sqrt{1+\frac{4}{z_2^2}}\right\}\!\right\}}-\frac{2\left(1+\frac{\left\{\!\left\{\frac{2}{z_2}\right\}\!\right\}}{\left\{\!\left\{\sqrt{1+\frac{4}{z_2^2}}\right\}\!\right\}}\right)}{\left\{\!\left\{\frac{2}{z_2}+\sqrt{1+\frac{4}{z_2^2}}\right\}\!\right\}_{\ln}}\right).
			\end{equation}
			\item For TM-EOS \eqref{hEOS3}, we have 
			\begin{equation}\label{e_hat:3}
				{\mathcal E}=\frac{3}{2\{\!\{z_2\}\!\}_{\ln}}+\left\{\!\left\{\sqrt{1+\frac{9}{4z_2^2}}\right\}\!\right\}-\frac{\left\{\!\left\{\frac{3}{2z_2}\right\}\!\right\}}{\{\!\{z_2\}\!\}}\left(\frac{\{\!\{z_2\}\!\}\left\{\!\left\{\frac{3}{2z_2}\right\}\!\right\}}{\left\{\!\left\{\sqrt{1+\frac{9}{4z_2^2}}\right\}\!\right\}}-\frac{3}{2}\frac{1+\frac{\left\{\!\left\{\frac{3}{2z_2}\right\}\!\right\}}{\left\{\!\left\{\sqrt{1+\frac{9}{4z_2^2}}\right\}\!\right\}}}{\left\{\!\left\{\frac{3}{2z_2}+\sqrt{1+\frac{9}{4z_2^2}}\right\}\!\right\}_{\ln}}\right).
			\end{equation}
		\end{itemize}
	\end{theorem}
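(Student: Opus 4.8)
The plan is to bypass the integral representation \eqref{1.8:int} entirely and work from the purely algebraic reformulation $\mathcal{E} = [\![\mathcal{F}(z_2)]\!]/[\![z_2]\!]$ established in \eqref{1.8:1}, where $\mathcal{F}(z_2) = z_2 h(1/z_2) - \int^{1/z_2}\frac{e'(x)}{x}\,dx$. The key observation is that the antiderivative appearing here is precisely $S + \ln\rho$, which has already been computed in closed form for each of the four EOSs in the corresponding Remark; combined with $z_2 h(1/z_2) = 1 + z_2 + z_2\,e(1/z_2)$ (from \eqref{eq:gEOS} with $\theta = 1/z_2$) and the explicit $e(\theta)$ recorded in each Remark, this yields a closed-form scalar function $\mathcal{F}(z_2)$ for every EOS. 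The entire theorem then reduces to expanding the single jump $[\![\mathcal{F}(z_2)]\!]$ and dividing by $[\![z_2]\!]$.

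First I would write down $\mathcal{F}(z_2)$ for each EOS and simplify its purely rational part. A fortunate feature is that the algebraic (non-logarithmic, non-radical) terms collapse dramatically: for ID-EOS and RC-EOS they reduce to a constant (e.g.\ $\mathcal{F}_{ID}(z_2) = z_2 + \mathrm{const} + \frac{1}{\Gamma-1}\ln z_2$, and for RC-EOS the two rational fractions sum to the constant $3$), while for IP-EOS and TM-EOS the terms $z_2$ and $-z_2$ coming from $z_2\,e(1/z_2)$ cancel, leaving a single radical term $z_2\sqrt{1 + c/z_2^2}$ (with $c = 4$ or $c = \tfrac94$) plus a constant and a logarithmic term. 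Since constants are annihilated by $[\![\cdot]\!]$, only the genuinely nontrivial pieces survive the jump.

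Next I would expand $[\![\mathcal{F}(z_2)]\!]$ using the jump calculus \eqref{product}--\eqref{sqrt} together with the logarithmic-mean identity $[\![\ln a]\!] = [\![a]\!]/\{\!\{a\}\!\}_{\ln}$ from \eqref{logAVG}. Every term of the form $\ln(\alpha z_2 + \beta)$ contributes $\alpha[\![z_2]\!]/\{\!\{\alpha z_2 + \beta\}\!\}_{\ln}$, so after dividing by $[\![z_2]\!]$ it produces exactly the logarithmic-mean terms appearing in \eqref{e_hat:ideal}--\eqref{e_hat:3}. For ID-EOS and RC-EOS this finishes the proof after a one-line division. For IP-EOS and TM-EOS, the radical term is split by \eqref{product} as $[\![z_2\sqrt{1+c/z_2^2}]\!] = \{\!\{z_2\}\!\}[\![\sqrt{1+c/z_2^2}]\!] + [\![z_2]\!]\{\!\{\sqrt{1+c/z_2^2}\}\!\}$; the radical jump is then evaluated by \eqref{sqrt} and \eqref{square}, while the log-of-a-sum term $\ln(a/z_2 + \sqrt{1+c/z_2^2})$ is handled by applying \eqref{division} to $a/z_2$ and \eqref{sqrt} to the radical inside \eqref{logAVG}.

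I expect the radical EOSs (IP and TM) to be the main obstacle. The difficulty is twofold. First, one must verify that each radical jump and each log-of-a-sum jump is proportional to $[\![z_2]\!]$, so that the final division is legitimate; this hinges on \eqref{division} turning $[\![a/z_2]\!]$ into $-\{\!\{a/z_2\}\!\}[\![z_2]\!]/\{\!\{z_2\}\!\}$ and on \eqref{sqrt}--\eqref{square} extracting a clean factor of $[\![z_2]\!]$ with $z_2$-dependent averaged coefficients. Second, the resulting averages $\{\!\{a/z_2\}\!\}$, $\{\!\{\sqrt{1+c/z_2^2}\}\!\}$, $\{\!\{z_2\}\!\}$, and the logarithmic mean of the sum must be regrouped to reproduce precisely the nested structure stated in \eqref{e_hat:2}--\eqref{e_hat:3} (in particular, the cross term $-\{\!\{a/z_2\}\!\}^2/\{\!\{\sqrt{1+c/z_2^2}\}\!\}$ emerges from the radical piece, and the logarithmic-mean factor multiplying $2(1 + \{\!\{a/z_2\}\!\}/\{\!\{\sqrt{1+c/z_2^2}\}\!\})$ emerges from the log-of-a-sum piece). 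This bookkeeping, rather than any conceptual step, is the technical heart of the argument.
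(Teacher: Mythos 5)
Your proposal is correct and follows essentially the same route as the paper: since $\mathcal{F}(z_2)=z_2h-\int^{1/z_2}\frac{e'(x)}{x}dx=W_1-\ln z_1$, computing $[\![\mathcal{F}(z_2)]\!]/[\![z_2]\!]$ is literally the paper's computation of $([\![W_1]\!]-[\![\ln z_1]\!])/[\![z_2]\!]$, carried out with the same closed-form $S$ from the Remarks and the same jump identities \eqref{product}--\eqref{sqrt}, \eqref{logAVG}. Your observations about the collapse of the rational parts (e.g.\ the two RC-EOS fractions summing to $3$, the cancellation of $\pm z_2$ for IP/TM) and about where the bookkeeping burden lies for the radical EOSs all match the paper's derivation.
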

	
	\begin{proof}
		We verify the formulas of ${\mathcal E}$ for the four special EOSs separately, by first rewriting the jump of $W_1$  and then substituting it into \eqref{1.8} to calculate ${\mathcal E}$. One can also follow \eqref{1.8:int} to give a direct calculation of $\mathcal{E}$. 
		
	 \textbf{ID-EOS}: From \eqref{generalEntropyVar} and \eqref{IDS}, we can reformulate the jump of $W_1$ as follows:
			\begin{equation*}
				\begin{aligned}
					[\![W_1]\!] &=\left[\!\left[\frac{\Gamma}{\Gamma-1}+z_2 -S_{ID}\right]\!\right]=[\![z_2]\!]-[\![{S}_{ID}]\!].\\
					&=[\![z_2]\!]-\left[\!\left[-\ln(z_1)-\frac{1}{\Gamma-1}\ln(z_2)\right]\!\right]\\
					&=[\![z_2]\!]+[\![\ln(z_1)]\!]+\frac{1}{\Gamma-1}[\![\ln(z_2)]\!].
				\end{aligned}
			\end{equation*}
			Substituting it into \eqref{1.8}, we have
			\begin{equation} \label{ID1.8}
				\begin{aligned}
					{\mathcal E}&=\frac{[\![z_2]\!]+[\![\ln(z_1)]\!]+\frac{1}{\Gamma-1}[\![\ln(z_2)]\!]-[\![\ln(z_1)]\!]}{[\![z_2]\!]}\\
					&=1+\frac{[\![\ln(z_2)]\!]}{(\Gamma-1)[\![z_2]\!]}
					\\
					&
					\overset{\eqref{logAVG}}{=}1+\frac{1}{(\Gamma-1)\{\!\{z_2\}\!\}_{\ln}}.
				\end{aligned}    
			\end{equation} 
			Since the function $e(\theta)=\frac{\theta}{\Gamma-1}$ is very simple, the explicit form of 
			  $\mathcal{E}$ can also be easily derived by using the integral formulation \eqref{1.8:int}: 
			\begin{align*}
			    \mathcal{E} &= 1+\frac{1}{\Gamma-1}\int_0^1\frac{1}{z_{2,L}+s(z_{2,R}-z_{2,L})}ds\\
			    &=1+\frac{[\![\ln(z_2)]\!]}{[\![z_2]\!]}\overset{\eqref{logAVG}}{=}1+\frac{1}{(\Gamma-1)\{\!\{z_2\}\!\}_{\ln}},
			\end{align*}
			which is consistent with \eqref{ID1.8}.

			\textbf{RC-EOS}: From \eqref{generalEntropyVar}, we can derive the jump term of $W_1$ by
			\begin{equation*}
				\begin{aligned}
					[\![W_1]\!]&=\left[\!\left[z_2h-{S}_{RC}\right]\!\right]\\
					&=\left[\!\left[4+z_2+\ln(z_1)+3\ln(z_2)-\frac{3}{2}\ln(2z_2+3)\right]\!\right]\\
					&=[\![z_2]\!]+[\![\ln(z_1)]\!]+3[\![\ln(z_2)]\!]-\frac{3}{2}\left[\!\left[\ln(2z_2+3)\right]\!\right].
				\end{aligned}
			\end{equation*}
			Substituting it into \eqref{1.8}, we have
			\begin{equation} \label{RC1.8}
				\begin{aligned}
					{\mathcal E}&=\frac{[\![z_2]\!]+[\![\ln(z_1)]\!]+3[\![\ln(z_2)]\!]-\frac{3}{2}[\![\ln(2z_2+3)]\!]-[\![\ln(z_1)]\!]}{[\![z_2]\!]}\\
					&=1+3\frac{[\![\ln(z_2)]\!]}{[\![z_2]\!]}-\frac{[\![\ln(2z_2+3)]\!]}{[\![z_2]\!]}
					\\
					&
					\overset{\eqref{logAVG}}{=}1+\frac{3}{\{\!\{z_2\}\!\}_{\ln}}-\frac{3}{\{\!\{2z_2+3\}\!\}_{\ln}}.
				\end{aligned}    
			\end{equation}
			On the other hand,	
				since the function $e(\theta)=\frac{3\theta(3\theta+1)}{3\theta+2}$ is fairly simple, the explicit form of 
				$\mathcal{E}$ can also be easily derived by using the integral formulation \eqref{1.8:int}: 
			\begin{align*}
			    \mathcal{E} &= 1+\int_0^1\frac{3(3+z_{2,L}+s(z_{2,R}-z_{2,L}))}{(2(z_{2,L}+s(z_{2,R}-z_{2,L}))+3)(z_{2,L}+s(z_{2,R}-z_{2,L}))}ds\\
			    &=1+\int_0^1\left(\frac{3}{z_{2,L}+s(z_{2,R}-z_{2,L})}-\frac{3}{2(z_{2,L}+s(z_{2,R}-z_{2,L}))+3}\right)ds\\
			    &=1+\frac{3[\![\ln(z_2)]\!]}{[\![z_2]\!]}-\frac{3[\![\ln(2z_2+3)]\!]}{2[\![z_2]\!]}\overset{\eqref{logAVG}}{=}1+\frac{3}{\{\!\{z_2\}\!\}_{\ln}}-\frac{3}{\{\!\{2z_2+3\}\!\}_{\ln}},
			\end{align*}
			which is consistent with \eqref{RC1.8}.

		 \textbf{IP-EOS}: From \eqref{generalEntropyVar}, we can derive the jump term of $W_1$ by
			\begin{align}
			    \nonumber
				[\![W_1]\!]&=\left[\!\left[z_2h-{S}_{IP}\right]\!\right]
				\\ \label{IP1.8}
				&=\left[\!\left[2+z_2\sqrt{1+\frac{4}{z_2^2}}+\ln(z_1)+\ln(z_2)-2\ln\left(\frac{2}{z_2}+\sqrt{1+\frac{4}{z_2^2}}\right)\right]\!\right]
				\\ \nonumber
				&\overset{\eqref{product}}{=}[\![z_2]\!]\left\{\!\left\{\sqrt{1+\frac{4}{z_2^2}}\right\}\!\right\}+\{\!\{z_2\}\!\}\left[\!\left[\sqrt{1+\frac{4}{z_2^2}}\right]\!\right]+[\![\ln(z_1)]\!]+[\![\ln(z_2)]\!]-2\left[\!\left[\ln\left(\frac{2}{z_2}+\sqrt{1+\frac{4}{z_2^2}}\right)\right]\!\right]
				\\ \nonumber
				&\overset{\eqref{logAVG}}{=}[\![z_2]\!]\left\{\!\left\{\sqrt{1+\frac{4}{z_2^2}}\right\}\!\right\}+\{\!\{z_2\}\!\}\left[\!\left[\sqrt{1+\frac{4}{z_2^2}}\right]\!\right]+[\![\ln(z_1)]\!]+[\![\ln(z_2)]\!]-2\frac{\left[\!\left[\frac{2}{z_2}+\sqrt{1+\frac{4}{z_2^2}}\right]\!\right]}{\left\{\!\left\{\frac{2}{z_2}+\sqrt{1+\frac{4}{z_2^2}}\right\}\!\right\}_{\ln}}
				\\ \nonumber
				&\overset{\eqref{sqrt}}{=}[\![z_2]\!]\left\{\!\left\{\sqrt{1+\frac{4}{z_2^2}}\right\}\!\right\}+\{\!\{z_2\}\!\}\frac{\left[\!\left[1+\left(\frac{2}{z_2}\right)^2\right]\!\right]}{2\left\{\!\left\{\sqrt{1+\frac{4}{z_2^2}}\right\}\!\right\}}+[\![\ln(z_1)]\!]+[\![\ln(z_2)]\!]
				\\ \nonumber
				&\qquad-2\frac{\left[\!\left[\frac{2}{z_2}\right]\!\right]}{\left\{\!\left\{\frac{2}{z_2}+\sqrt{1+\frac{4}{z_2^2}}\right\}\!\right\}_{\ln}}-\frac{2}{\left\{\!\left\{\frac{2}{z_2}+\sqrt{1+\frac{4}{z_2^2}}\right\}\!\right\}_{\ln}}\frac{\left[\!\left[1+\left(\frac{2}{z_2}\right)^2\right]\!\right]}{2\left\{\!\left\{\sqrt{1+\frac{4}{z_2^2}}\right\}\!\right\}}
				\\ \nonumber
				&\overset{\eqref{square}}{=}[\![z_2]\!]\left\{\!\left\{\sqrt{1+\frac{4}{z_2^2}}\right\}\!\right\}+\{\!\{z_2\}\!\}\frac{\left\{\!\left\{\frac{2}{z_2}\right\}\!\right\}\left[\!\left[\frac{2}{z_2}\right]\!\right]}{\left\{\!\left\{\sqrt{1+\frac{4}{z_2^2}}\right\}\!\right\}}+[\![\ln(z_1)]\!]+[\![\ln(z_2)]\!]-2\frac{\left[\!\left[\frac{2}{z_2}\right]\!\right]}{\left\{\!\left\{\frac{2}{z_2}+\sqrt{1+\frac{4}{z_2^2}}\right\}\!\right\}_{\ln}}
				\\ \nonumber
				&\qquad -\frac{2}{\left\{\!\left\{\frac{2}{z_2}+\sqrt{1+\frac{4}{z_2^2}}\right\}\!\right\}_{\ln}}\frac{\left\{\!\left\{\frac{2}{z_2}\right\}\!\right\}\left[\!\left[\frac{2}{z_2}\right]\!\right]}{\left\{\!\left\{\sqrt{1+\frac{4}{z_2^2}}\right\}\!\right\}}
				\\ \nonumber
				&\overset{\eqref{division}}{=}[\![z_2]\!]\left\{\!\left\{\sqrt{1+\frac{4}{z_2^2}}\right\}\!\right\}-\left(\frac{\{\!\{z_2\}\!\}\left\{\!\left\{\frac{2}{z_2}\right\}\!\right\}}{\left\{\!\left\{\sqrt{1+\frac{4}{z_2^2}}\right\}\!\right\}}-\frac{2\left(1+\frac{\left\{\!\left\{\frac{2}{z_2}\right\}\!\right\}}{\left\{\!\left\{\sqrt{1+\frac{4}{z_2^2}}\right\}\!\right\}}\right)}{\left\{\!\left\{\frac{2}{z_2}+\sqrt{1+\frac{4}{z_2^2}}\right\}\!\right\}_{\ln}}\right)\frac{[\![z_2]\!]\left\{\!\left\{\frac{2}{z_2}\right\}\!\right\}}{\{\!\{z_2\}\!\}}
				\\ \nonumber
				&\qquad+[\![\ln(z_1)]\!]+[\![\ln(z_2)]\!].
			\end{align}
			Substituting it into \eqref{1.8}, we have
			\begin{align*}
					{\mathcal E}&=\frac{[\![z_2]\!]\left\{\!\left\{\sqrt{1+\frac{4}{z_2^2}}\right\}\!\right\}-\left(\frac{\{\!\{z_2\}\!\}\left\{\!\left\{\frac{2}{z_2}\right\}\!\right\}}{\left\{\!\left\{\sqrt{1+\frac{4}{z_2^2}}\right\}\!\right\}}-\frac{2\left(1+\frac{\left\{\!\left\{\frac{2}{z_2}\right\}\!\right\}}{\left\{\!\left\{\sqrt{1+\frac{4}{z_2^2}}\right\}\!\right\}}\right)}{\left\{\!\left\{\frac{2}{z_2}+\sqrt{1+\frac{4}{z_2^2}}\right\}\!\right\}_{\ln}}\right)\frac{[\![z_2]\!]\left\{\!\left\{\frac{2}{z_2}\right\}\!\right\}}{\{\!\{z_2\}\!\}}+[\![\ln(z_2)]\!]}{[\![z_2]\!]}\\
					&\overset{\eqref{logAVG}}{=}\left\{\!\left\{\sqrt{1+\frac{4}{z_2^2}}\right\}\!\right\}-\frac{\left\{\!\left\{\frac{2}{z_2}\right\}\!\right\}}{\{\!\{z_2\}\!\}}\left(\frac{\{\!\{z_2\}\!\}\left\{\!\left\{\frac{2}{z_2}\right\}\!\right\}}{\left\{\!\left\{\sqrt{1+\frac{4}{z_2^2}}\right\}\!\right\}}-\frac{2\left(1+\frac{\left\{\!\left\{\frac{2}{z_2}\right\}\!\right\}}{\left\{\!\left\{\sqrt{1+\frac{4}{z_2^2}}\right\}\!\right\}}\right)}{\left\{\!\left\{\frac{2}{z_2}+\sqrt{1+\frac{4}{z_2^2}}\right\}\!\right\}_{\ln}}\right)+\frac{1}{\{\!\{z_2\}\!\}_{\ln}}.
			\end{align*}

			 \textbf{TM-EOS}: From \eqref{generalEntropyVar}, we can derive the jump term of $W_1$ by
			\begin{align}
			    \nonumber
				[\![W_1]\!]&=\left[\!\left[z_2h-{S}_{TM}\right]\!\right]
				\\ \label{TM1.8}
				&=\left[\!\left[\frac{5}{2}+z_2\sqrt{1+\frac{9}{4z_2^2}}+\ln(z_1)+\frac{3}{2}\ln(z_2)-\frac{3}{2}\ln\left(\frac{3}{2z_2}+\sqrt{1+\frac{9}{4z_2^2}}\right)\right]\!\right]
				\\ \nonumber
				&\overset{\eqref{product}}{=}[\![z_2]\!]\left\{\!\left\{\sqrt{1+\frac{9}{4z_2^2}}\right\}\!\right\}+\{\!\{z_2\}\!\}\left[\!\left[\sqrt{1+\frac{9}{4z_2^2}}\right]\!\right]+[\![\ln(z_1)]\!]+\frac{3}{2}[\![\ln(z_2)]\!]
				\\ \nonumber
				&\qquad{-\frac{3}{2}\left[\!\left[\ln\left(\frac{3}{2z_2}+\sqrt{1+\frac{9}{4z_2^2}}\right)\right]\!\right]}
				\\ \nonumber
				&\overset{\eqref{logAVG}}{=}[\![z_2]\!]\left\{\!\left\{\sqrt{1+\frac{9}{4z_2^2}}\right\}\!\right\}+\{\!\{z_2\}\!\}\left[\!\left[\sqrt{1+\frac{9}{4z_2^2}}\right]\!\right]+[\![\ln(z_1)]\!]+\frac{3}{2}[\![\ln(z_2)]\!]-\frac{3}{2}\frac{\left[\!\left[\frac{3}{2z_2}+\sqrt{1+\frac{9}{4z_2^2}}\right]\!\right]}{\left\{\!\left\{\frac{3}{2z_2}+\sqrt{1+\frac{9}{4z_2^2}}\right\}\!\right\}_{\ln}}
				\\ \nonumber
				&\overset{\eqref{sqrt}}{=}[\![z_2]\!]\left\{\!\left\{\sqrt{1+\frac{9}{4z_2^2}}\right\}\!\right\}+\{\!\{z_2\}\!\}\frac{\left[\!\left[1+\left(\frac{3}{2z_2}\right)^2\right]\!\right]}{2\left\{\!\left\{\sqrt{1+\frac{9}{4z_2^2}}\right\}\!\right\}}+[\![\ln(z_1)]\!]+\frac{3}{2}[\![\ln(z_2)]\!]
				\\ \nonumber
				&\qquad-\frac{3}{2}\frac{\left[\!\left[\frac{3}{2z_2}\right]\!\right]}{\left\{\!\left\{\frac{3}{2z_2}+\sqrt{1+\frac{9}{4z_2^2}}\right\}\!\right\}_{\ln}}-\frac{3}{2\left\{\!\left\{\frac{3}{2z_2}+\sqrt{1+\frac{9}{4z_2^2}}\right\}\!\right\}_{\ln}}\frac{\left[\!\left[1+\left(\frac{3}{2z_2}\right)^2\right]\!\right]}{2\left\{\!\left\{\sqrt{1+\frac{9}{4z_2^2}}\right\}\!\right\}}
				\\ \nonumber
				&\overset{\eqref{square}}{=}[\![z_2]\!]\left\{\!\left\{\sqrt{1+\frac{9}{4z_2^2}}\right\}\!\right\}+\{\!\{z_2\}\!\}\frac{\left\{\!\left\{\frac{3}{2z_2}\right\}\!\right\}\left[\!\left[\frac{3}{2z_2}\right]\!\right]}{\left\{\!\left\{\sqrt{1+\frac{9}{4z_2^2}}\right\}\!\right\}}+[\![\ln(z_1)]\!]+\frac{3}{2}[\![\ln(z_2)]\!]-\frac{3}{2}\frac{\left[\!\left[\frac{3}{2z_2}\right]\!\right]}{\left\{\!\left\{\frac{3}{2z_2}+\sqrt{1+\frac{9}{4z_2^2}}\right\}\!\right\}_{\ln}}
				\\ \nonumber
				&\qquad -\frac{3}{2\left\{\!\left\{\frac{3}{2z_2}+\sqrt{1+\frac{9}{4z_2^2}}\right\}\!\right\}_{\ln}}\frac{\left\{\!\left\{\frac{3}{2z_2}\right\}\!\right\}\left[\!\left[\frac{3}{2z_2}\right]\!\right]}{\left\{\!\left\{\sqrt{1+\frac{9}{4z_2^2}}\right\}\!\right\}}
				\\ \nonumber
				&\overset{\eqref{division}}{=}[\![z_2]\!]\left\{\!\left\{\sqrt{1+\frac{9}{4z_2^2}}\right\}\!\right\}-\left(\frac{\{\!\{z_2\}\!\}\left\{\!\left\{\frac{3}{2z_2}\right\}\!\right\}}{\left\{\!\left\{\sqrt{1+\frac{9}{4z_2^2}}\right\}\!\right\}}-\frac{\frac{3}{2}\left(1+\frac{\left\{\!\left\{\frac{3}{2z_2}\right\}\!\right\}}{\left\{\!\left\{\sqrt{1+\frac{9}{4z_2^2}}\right\}\!\right\}}\right)}{\left\{\!\left\{\frac{3}{2z_2}+\sqrt{1+\frac{9}{4z_2^2}}\right\}\!\right\}_{\ln}}\right)\frac{[\![z_2]\!]\left\{\!\left\{\frac{3}{2z_2}\right\}\!\right\}}{\{\!\{z_2\}\!\}}
				\\ \nonumber
				&\qquad+[\![\ln(z_1)]\!]+\frac{3}{2}[\![\ln(z_2)]\!].
			\end{align}
			Substituting it into \eqref{1.8}, we have
			\begin{align*}
				{\mathcal E}&=\frac{[\![z_2]\!]\left\{\!\left\{\sqrt{1+\frac{9}{4z_2^2}}\right\}\!\right\}-\left(\frac{\{\!\{z_2\}\!\}\left\{\!\left\{\frac{3}{2z_2}\right\}\!\right\}}{\left\{\!\left\{\sqrt{1+\frac{9}{4z_2^2}}\right\}\!\right\}}-\frac{\frac{3}{2}\left(1+\frac{\left\{\!\left\{\frac{3}{2z_2}\right\}\!\right\}}{\left\{\!\left\{\sqrt{1+\frac{9}{4z_2^2}}\right\}\!\right\}}\right)}{\left\{\!\left\{\frac{3}{2z_2}+\sqrt{1+\frac{9}{4z_2^2}}\right\}\!\right\}_{\ln}}\right)\frac{[\![z_2]\!]\left\{\!\left\{\frac{3}{2z_2}\right\}\!\right\}}{\{\!\{z_2\}\!\}}+\frac{3}{2}[\![\ln(z_2)]\!]}{[\![z_2]\!]}\\
				&\overset{\eqref{logAVG}}{=}\left\{\!\left\{\sqrt{1+\frac{9}{4z_2^2}}\right\}\!\right\}-\frac{\left\{\!\left\{\frac{3}{2z_2}\right\}\!\right\}}{\{\!\{z_2\}\!\}}\left(\frac{\{\!\{z_2\}\!\}\left\{\!\left\{\frac{3}{2z_2}\right\}\!\right\}}{\left\{\!\left\{\sqrt{1+\frac{9}{4z_2^2}}\right\}\!\right\}}-\frac{\frac{3}{2}\left(1+\frac{\left\{\!\left\{\frac{3}{2z_2}\right\}\!\right\}}{\left\{\!\left\{\sqrt{1+\frac{9}{4z_2^2}}\right\}\!\right\}}\right)}{\left\{\!\left\{\frac{3}{2z_2}+\sqrt{1+\frac{9}{4z_2^2}}\right\}\!\right\}_{\ln}}\right)+\frac{3}{2\{\!\{z_2\}\!\}_{\ln}}.
			\end{align*}

	The proof is completed. 
	\end{proof}

\begin{remark}
	It is easy to verify that the two-point EC numerical flux $\widetilde{\textbf{F}}^{EC}_1 ( {\mathbf U}_L, {\mathbf U}_R )$ in \eqref{1D2ptECFlux} is consistent with the flux function ${\bf F}_1({\bf U})$ in the {\rm 1D RHD} equations, and ${\mathcal E}-1=\int_0^1e\left(\frac{1}{z_{2,L}+s(z_{2,R}-z_{2,L})}\right)ds$ is consistent with the specific internal energy $e$. 
\end{remark}
\begin{remark}
	It is worth mentioning that 
	the choice of parameter variables in \eqref{1DalgeVar} follows \cite{wu2020entropy} and is different from \cite{duan2019high}. 
Thanks to this choice, we obtain the EC numerical flux $\widetilde{\textbf{F}}^{EC}_1 ( {\mathbf U}_L, {\mathbf U}_R )$ in a unified form \eqref{1D2ptECFlux} for general Synge-type EOS. Moreover, in the case of ID-EOS, 
the expressions of our EC numerical flux are simpler than those obtained in \cite{duan2019high} via a different set of  parameter variables. 
\end{remark}

\subsection{Entropy conservative schemes}
In this subsection, we construct EC schemes for the 1D RHD equations. To avoid confusing subscripts, we use $x$ to denote the 1D spatial coordinate, $\mathbf{F}$ to denote the 1D flux function ${\mathbf F}_1$, and $q$ to represent the entropy flux associated with the convex entropy function $\eta$ in the $x$-direction. The spatial domain is divided into cells $I_i=(x_{i-\frac{1}{2}},x_{i+\frac{1}{2}})$ by uniform meshes $x_1<x_2<\cdots<x_N$, with the mesh size $\Delta x=x_{i+1}-x_{i}$. A semi-discrete finite difference scheme of the 1D RHD equations can be written as
\begin{equation} \label{2.1}
	\frac{d}{dt}\mathbf{U}_i(t) +\frac{\widehat{\mathbf{F}}_{i+\frac{1}{2}}(t)-\widehat{\mathbf{F}}_{i-\frac{1}{2}}(t)}{\Delta x}=0,
\end{equation}
where $\mathbf{U}_i(t)\approx \mathbf{U}(x_i,t)$, and the numerical flux $\widehat{\mathbf{F}}$ is consistent with the flux $\mathbf{F}$.

\begin{definition}[\cite{tadmor2003entropy}]
	The semi-discrete scheme \eqref{2.1} is EC if its numerical solutions satisfy a discrete entropy equality
	\begin{equation} \label{2.2}
		\frac{d}{dt}\eta(\mathbf{U}_i) +\frac{1}{\Delta x}\left(\widetilde{q}_{i+\frac{1}{2}}-\widetilde{q}_{i-\frac{1}{2}}\right)=0
	\end{equation}
	for some numerical entropy flux $\widetilde{q}$ consistent with the entropy flux $q$.
\end{definition}

\subsubsection{Second-order entropy conservative schemes}

If we take $\widehat{F}_{i+\frac{1}{2}}$ as the two-point EC numerical flux  $\widetilde{\mathbf{F}}^{EC}_1(\mathbf{U}_i,\mathbf{U}_{i+1})$ in \eqref{1D2ptECFlux}, then the scheme \eqref{2.1} becomes
\begin{equation} \label{2.3}
	\frac{d\mathbf{U}_i}{dt} =-\frac{\widetilde{\mathbf{F}}^{EC}_1(\mathbf{U}_i,\mathbf{U}_{i+1})-\widetilde{\mathbf{F}}^{EC}_1(\mathbf{U}_{i-1},\mathbf{U}_i)}{\Delta x},
\end{equation}
which is second-order accurate. 
 To verify the EC property of this scheme, we 
 follow the framework of Tadmor \cite{tadmor1987numerical,tadmor2003entropy} to 
  show the discrete entropy equality \eqref{2.2}. Note that 
\begin{equation} \label{2ndECComTemp}
	\frac{d}{dt}\eta(\mathbf{U}_i)=\eta'(\mathbf{U}_i)\frac{d\mathbf{U}_i}{dt}=-\mathbf{W}_i^\top\left(\frac{\widetilde{\mathbf{F}}^{EC}_1(\mathbf{U}_i,\mathbf{U}_{i+1})-\widetilde{\mathbf{F}}^{EC}_1(\mathbf{U}_{i-1},\mathbf{U}_i)}{\Delta x}\right). 
\end{equation}
Recalling that the definition of jump and arithmetic average operators in \eqref{jump} and \eqref{arithmeticAVG}, one obtains 
\begin{equation} \label{notationTemp}
	\mathbf{W}_i=\{\!\{\mathbf{W}\}\!\}_{i+\frac{1}{2}}-\frac{1}{2}[\![\mathbf{W}]\!]_{i+\frac{1}{2}}=\{\!\{\mathbf{W}\}\!\}_{i-\frac{1}{2}}+\frac{1}{2}[\![\mathbf{W}]\!]_{i-\frac{1}{2}}.
\end{equation}
Combining \eqref{2ndECComTemp}--\eqref{notationTemp} with the property \eqref{ECdef} of the two-point EC flux, we have
\begin{equation*}
	\begin{aligned}
		\frac{d}{dt}\eta(\mathbf{U}_i)&=-\frac{\Big(\{\!\{\mathbf{W}\}\!\}_{i+\frac{1}{2}}-\frac{1}{2}[\![\mathbf{W}]\!]_{i+\frac{1}{2}}\Big)^\top\widetilde{\mathbf{F}}^{EC}_1(\mathbf{U}_i,\mathbf{U}_{i+1})-\Big(\{\!\{\mathbf{W}\}\!\}_{i-\frac{1}{2}}+\frac{1}{2}[\![\mathbf{W}]\!]_{i-\frac{1}{2}}\Big)^\top\widetilde{\mathbf{F}}^{EC}_1(\mathbf{U}_{i-1},\mathbf{U}_i)}{\Delta x}\\
		&=-\frac{\{\!\{\mathbf{W}\}\!\}_{i+\frac{1}{2}}^\top\widetilde{\mathbf{F}}^{EC}_1(\mathbf{U}_i,\mathbf{U}_{i+1})-\{\!\{\mathbf{W}\}\!\}_{i-\frac{1}{2}}^\top\widetilde{\mathbf{F}}^{EC}_1(\mathbf{U}_{i-1},\mathbf{U}_i)-\frac{1}{2}\Big([\![\psi_1]\!]_{i+\frac{1}{2}}+[\![\psi_1]\!]_{i-\frac{1}{2}}\Big)}{\Delta x}\\
		&=-\frac{1}{\Delta x}\left(\left(\{\!\{\mathbf{W}\}\!\}_{i+\frac{1}{2}}^\top\widetilde{\mathbf{F}}^{EC}_1(\mathbf{U}_i,\mathbf{U}_{i+1})-\{\!\{\psi_1\}\!\}_{i+\frac{1}{2}}\right)-\left(\{\!\{\mathbf{W}\}\!\}_{i-\frac{1}{2}}^\top\widetilde{\mathbf{F}}^{EC}_1(\mathbf{U}_{i-1},\mathbf{U}_i)-\{\!\{\psi_1\}\!\}_{i-\frac{1}{2}}\right)\right)\\
	\end{aligned}    
\end{equation*}
If we take the numerical entropy flux $\widetilde{q}_{i+\frac{1}{2}}$ as 
\begin{equation} \label{2ndECentropyFlux}
	  \widetilde{q} (\mathbf{U}_i,\mathbf{U}_{i+1}) :=\{\!\{\mathbf{W}\}\!\}_{i+\frac{1}{2}}^\top \widetilde{\mathbf{F}}^{EC}_1(\mathbf{U}_i,\mathbf{U}_{i+1})-\{\!\{\psi_1\}\!\}_{i+\frac{1}{2}},
\end{equation}
then the discrete entropy equality \eqref{2.2} is satisfied. Moreover, the numerical entropy flux   \eqref{2ndECentropyFlux} is clearly consistent with the entropy flux $q$. Therefore, the scheme \eqref{2.3} is a second-order EC scheme with the corresponding numerical entropy flux given by \eqref{2ndECentropyFlux}.

\subsubsection{High-order entropy conservative schemes}
The semi-discrete EC scheme \eqref{2.3} is only second-order accurate in space. To obtain higher-order EC schemes, we can consider a linear combination of the two-point EC fluxes (cf.~\cite{lefloch2002fully}):
\begin{equation} \label{2.5}
	\widetilde{\mathbf{F}}_{i+\frac{1}{2}}^{2k}=\sum\limits_{r=1}^k\alpha_{k,r}\sum\limits_{s=0}^{r-1}\widetilde{\mathbf{F}}^{EC}_1(\mathbf{U}_{i-s},\mathbf{U}_{i-s+r}),
\end{equation} 
where the constants $\{\alpha_{k,r}\}$ satisfy
\begin{equation} \label{2.6}
	\sum\limits_{r=1}^kr\alpha_{k,r}=1,\quad\quad\sum\limits_{r=1}^{k}r^{2s-1}\alpha_{k,r}=0,\ s=2,\cdots,k.
\end{equation}
For example, the fourth- and sixth-order EC fluxes are given by
\begin{equation*}
	\begin{aligned}
		\widetilde{\mathbf{F}}_{i+\frac{1}{2}}^4=&\frac{4}{3}\widetilde{\mathbf{F}}^{EC}_1(\mathbf{U}_i,\mathbf{U}_{i+1})-\frac{1}{6}\left(\widetilde{\mathbf{F}}^{EC}_1(\mathbf{U}_i,\mathbf{U}_{i+2})+\widetilde{\mathbf{F}}^{EC}_1(\mathbf{U}_{i-1},\mathbf{U}_{i+1})\right),\\
		\widetilde{\mathbf{F}}_{i+\frac{1}{2}}^6=&\frac{3}{2}\widetilde{\mathbf{F}}^{EC}_1(\mathbf{U}_i,\mathbf{U}_{i+1})-\frac{3}{10}\left(\widetilde{\mathbf{F}}^{EC}_1(\mathbf{U}_i,\mathbf{U}_{i+2})+\widetilde{\mathbf{F}}^{EC}_1(\mathbf{U}_{i-1},\mathbf{U}_{i+1})\right)\\+&\frac{1}{30}\left(\widetilde{\mathbf{F}}^{EC}_1(\mathbf{U}_i,\mathbf{U}_{i+3})+\widetilde{\mathbf{F}}^{EC}_1(\mathbf{U}_{i-1},\mathbf{U}_{i+2})+\widetilde{\mathbf{F}}^{EC}_1(\mathbf{U}_{i-2},\mathbf{U}_{i+1})\right).
	\end{aligned}
\end{equation*}

If we take the numerical flux $\widehat{\mathbf{F}}_{i+\frac{1}{2}}$ in \eqref{2.1} as 
$\widetilde{\mathbf{F}}_{i+\frac{1}{2}}^{2k}$, then we obtain a 
$2k$th-order semi-discrete scheme
\begin{equation} \label{2.7}
	\frac{d\mathbf{U}_i}{dt}=-\frac{\widetilde{\mathbf{F}}_{i+\frac{1}{2}}^{2k}-\widetilde{\mathbf{F}}_{i-\frac{1}{2}}^{2k}}{\Delta x}. 
\end{equation}
Following \cite{lefloch2002fully}, one can verify that the high-order 
scheme \eqref{2.7} is EC with the corresponding numerical entropy flux given by
\begin{equation} \label{2.8}
	\widetilde{q}_{i+\frac{1}{2}}^{2k}=\sum\limits_{r=1}^k\alpha_{k,r}\sum\limits_{s=0}^{r-1}\widetilde{q}(\mathbf{U}_{i-s},\mathbf{U}_{i-s+r}),
\end{equation}
where the constants $\{\alpha_{k,r}\}$ are defined in \eqref{2.6}, and $\widetilde{q}$ is defined in \eqref{2ndECentropyFlux}.

\subsection{Entropy stable schemes}

The above EC schemes may produce oscillations when solutions of the RHD equations contain discontinuities. 
Hence, we need to add some dissipation terms to  
guarantee the entropy stability \cite{tadmor1987numerical,tadmor2003entropy}.

If the numerical flux in \eqref{2.1} is taken as
\begin{equation} \label{3.1}
	\widehat{\mathbf F}_{i+\frac{1}{2}}=\widetilde{\mathbf F}_{i+\frac{1}{2}}-\frac{1}{2}\mathbf{D}_{i+\frac{1}{2}}\ [\![\mathbf{W}]\!]_{i+\frac{1}{2}},
\end{equation}
where $\widetilde{\mathbf F}_{i+\frac{1}{2}}$ is an EC flux, and ${\mathbf D}_{i+\frac{1}{2}}$ is a positive semi-definite matrix, 
then the scheme \eqref{2.1} is first-order accurate and satisfies the discrete entropy inequality
\begin{equation} \label{3.2}
	\frac{d}{dt}\eta(\mathbf{U}_i)+\frac{1}{\Delta x}\Big(\widehat{q}_{i+\frac{1}{2}}-\widehat{q}_{i-\frac{1}{2}}\Big)=-\frac{1}{4\Delta x} \Big([\![\mathbf{W}]\!]_{i+\frac{1}{2}}^\top\mathbf{D}_{i+\frac{1}{2}}\ [\![\mathbf{W}]\!]_{i+\frac{1}{2}}+[\![\mathbf{W}]\!]_{i-\frac{1}{2}}^\top\mathbf{D}_{i-\frac{1}{2}}\ [\![\mathbf{W}]\!]_{i-\frac{1}{2}}\Big)\leq 0
\end{equation}
with 
\begin{equation} \label{3.4}
	\widehat{q}_{i+\frac{1}{2}} :=\widetilde{q}_{i+\frac{1}{2}}-\frac{1}{2}\{\!\{\mathbf{W}\}\!\}_{i+\frac{1}{2}}^\top\mathbf{D}_{i+\frac{1}{2}}[\![\mathbf{W}]\!]_{i+\frac{1}{2}}.
\end{equation}
Therefore, the resulting scheme \eqref{2.1} with \eqref{3.1} is ES, and the corresponding numerical entropy flux is given by \eqref{3.4}. 

In the following, we will discuss how to define the positive semi-definite matrix ${\mathbf D}_{i+\frac{1}{2}}$ and how to generalize the first-order ES scheme to design high-order ES schemes.

\subsubsection{Dissipation matrix}
The positive semi-definite matrix in equation \eqref{3.1} is referred to as the dissipation matrix. It can be defined as follows:
\begin{equation} \label{3.3}
	\mathbf{D}_{i+\frac{1}{2}}:=\mathbf{R}_{i+\frac{1}{2}}\left|\mathbf{\Lambda}_{i+\frac{1}{2}}\right|\mathbf{R}_{i+\frac{1}{2}}^\top,
\end{equation}
where the matrix $\mathbf{R}$ is formed by the suitably scaled right eigenvectors of the Jacobian matrix $\frac{\partial {\mathbf F}_1({\mathbf U})}{\partial {\mathbf U}}$ of the 1D RHD system, and it satisfies
\begin{equation} \label{scaledMatExist}
	\frac{\partial\mathbf{F}_1}{\partial\mathbf{U}}=\mathbf{R}\mathbf{\Lambda}\mathbf{R}^{-1},\quad\quad \frac{\partial \mathbf{U}}{\partial \mathbf{W}}=\mathbf{R}\mathbf{R}^\top.
\end{equation}
The formula of $\mathbf{R}$ is derived in Theorem \ref{thm:1DDissipationMat}. 
In \eqref{scaledMatExist}, the diagonal matrix $\mathbf{\Lambda} = {\rm diag}\{ \lambda_1, \lambda_2, \lambda_3 \}$, where 
	\begin{equation*}
	\lambda_1 = \frac{v_1-c_s}{1-v_1c_s},\qquad
	\lambda_2 = v_1,\qquad
	\lambda_3 = \frac{v_1+c_s}{1+v_1c_s}.
\end{equation*}
are the three eigenvalues of the Jacobian matrix $\frac{\partial {\mathbf F}_1({\mathbf U})}{\partial {\mathbf U}}$. 
There are two common ways to define $|\mathbf{\Lambda}|$ in \eqref{3.3}:
\begin{equation} \label{Roe-type}
	|\mathbf{\Lambda}| = {\rm diag}\{|\lambda_1|,|\lambda_2|,|\lambda_3|\},
\end{equation}
or 
\begin{equation}\label{Rusanov-type}
	|\mathbf{\Lambda}| = \max\{|\lambda_1|,|\lambda_2|,|\lambda_3|\}\mathbf{I}. 
\end{equation}
The definition \eqref{Roe-type} gives the Roe-type dissipation term, while 
 \eqref{Rusanov-type} leads to the Rusanov-type dissipation term.

\begin{theorem} \label{thm:1DDissipationMat}
For the 1D RHD system with general Synge-type EOS \eqref{eq:gEOS}, the scaled eigenvector matrix ${\mathbf R}$ satisfying \eqref{scaledMatExist} is given by
		\begin{equation} \label{1DDissMat}
			\mathbf{R}:= \tilde{\mathbf{R}}\sqrt{\tilde{\mathbf{D}}}:=
			\begin{pmatrix}
				1 & 1 & 1 \\
				(v_1-c_s)h\gamma & \Big(h-\theta\left(1+e'(\theta)\right)\Big)\gamma v_1 & (v_1+c_s)h\gamma \\
				(1-v_1c_s)h\gamma & \Big(h-\theta\left(1+e'(\theta)\right)\Big)\gamma & (1+v_1c_s)h\gamma 
			\end{pmatrix}
			\begin{pmatrix}
				\sqrt{d_1} & 0 & 0 \\
				0 & \sqrt{d_2} & 0 \\
				0 & 0 & \sqrt{d_3} 
			\end{pmatrix},
		\end{equation}
	where the scaling coefficients $d_j,\ j = 1,2,3$, are defined as  
	\begin{equation} \label{1DdissipationMatrixCoeff}
		\left\{~ 
		\begin{aligned}
			d_1 &= \left(\frac{\rho e'(\theta)}{2(1+e'(\theta))}-\frac{\rho\theta v_1}{2c_sh}\right)\gamma,\\
			d_2 &= \frac{\rho\gamma}{1+e'(\theta)},\\
			d_3 &= \left(\frac{\rho e'(\theta)}{2(1+e'(\theta))}+\frac{\rho\theta v_1}{2c_sh}\right)\gamma.
		\end{aligned}
	\right.
	\end{equation} 
\end{theorem}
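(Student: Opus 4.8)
The plan is to verify the two defining relations in \eqref{scaledMatExist} directly, exploiting the fact that every quantity is explicit in the primitive variables $\mathbf{V}$ even though the conservative Jacobian $\frac{\partial\mathbf{F}_1}{\partial\mathbf{U}}$ itself is not. A preliminary reduction simplifies the first identity: since $\sqrt{\tilde{\mathbf D}}$ is diagonal and therefore commutes with the diagonal matrix $\mathbf\Lambda$, we have $\mathbf R\mathbf\Lambda\mathbf R^{-1}=\tilde{\mathbf R}\sqrt{\tilde{\mathbf D}}\,\mathbf\Lambda\,\sqrt{\tilde{\mathbf D}}^{-1}\tilde{\mathbf R}^{-1}=\tilde{\mathbf R}\mathbf\Lambda\tilde{\mathbf R}^{-1}$. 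Hence the first relation in \eqref{scaledMatExist} is equivalent to showing that the three columns $\tilde{\mathbf r}_1,\tilde{\mathbf r}_2,\tilde{\mathbf r}_3$ of the unscaled matrix $\tilde{\mathbf R}$ are right eigenvectors of $\frac{\partial\mathbf F_1}{\partial\mathbf U}$ with eigenvalues $\lambda_1,\lambda_2,\lambda_3$; the scaling coefficients $d_j$ play no role here and are pinned down only by the second relation. Along the way I would also confirm that each $d_j>0$, so that $\sqrt{\tilde{\mathbf D}}$ is real and $\mathbf R$ is invertible.

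For the eigenvector step, I would use the chain rule $\frac{\partial\mathbf F_1}{\partial\mathbf U}=\frac{\partial\mathbf F_1}{\partial\mathbf V}\frac{\partial\mathbf V}{\partial\mathbf U}$, so that $\frac{\partial\mathbf F_1}{\partial\mathbf U}\tilde{\mathbf r}_j=\lambda_j\tilde{\mathbf r}_j$ is equivalent to the generalized eigenproblem $\frac{\partial\mathbf F_1}{\partial\mathbf V}\mathbf s_j=\lambda_j\frac{\partial\mathbf U}{\partial\mathbf V}\mathbf s_j$ with $\tilde{\mathbf r}_j=\frac{\partial\mathbf U}{\partial\mathbf V}\mathbf s_j$, where $\frac{\partial\mathbf F_1}{\partial\mathbf V}$ and $\frac{\partial\mathbf U}{\partial\mathbf V}$ are the explicit matrices in \eqref{Fi_V} and \eqref{U_V}. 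Substituting each candidate and repeatedly eliminating the sound speed via $c_s^2=\frac{\theta(1+e'(\theta))}{he'(\theta)}$ from \eqref{cs_theta} reduces the verification to scalar identities; the contact field $\tilde{\mathbf r}_2$ with $\lambda_2=v_1$ is immediate, while the acoustic vectors $\tilde{\mathbf r}_{1,3}$ require combining the $\pm c_s$ contributions. Positivity of the coefficients \eqref{1DdissipationMatrixCoeff} follows from the causality condition \eqref{assumption:1_theta}: writing $\frac{\theta}{c_s h}=\sqrt{\frac{\theta e'(\theta)}{h(1+e'(\theta))}}$ and using $\frac{\theta}{h}=c_s^2\frac{e'(\theta)}{1+e'(\theta)}$, one obtains $\frac{\rho\theta|v_1|}{2c_sh}<\frac{\rho e'(\theta)}{2(1+e'(\theta))}$ because $v_1^2c_s^2<1$, which gives $d_1,d_3>0$, and $d_2>0$ is clear.

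The main obstacle is the scaling identity. Here
\begin{equation*}
\mathbf R\mathbf R^\top=\tilde{\mathbf R}\tilde{\mathbf D}\tilde{\mathbf R}^\top=\sum_{j=1}^{3}d_j\,\tilde{\mathbf r}_j\tilde{\mathbf r}_j^\top
\end{equation*}
must be matched against $\frac{\partial\mathbf U}{\partial\mathbf W}=\big(\frac{\partial\mathbf W}{\partial\mathbf U}\big)^{-1}=\eta_{\mathbf U\mathbf U}^{-1}$. I would compute $\frac{\partial\mathbf U}{\partial\mathbf W}$ explicitly, either by inverting the Hessian $\eta_{\mathbf U\mathbf U}$ obtained in Theorem~\ref{thm:strict convex entropy -DS}, or more directly through $\frac{\partial\mathbf U}{\partial\mathbf W}=\frac{\partial\mathbf U}{\partial\mathbf V}\big(\frac{\partial\mathbf W}{\partial\mathbf V}\big)^{-1}$ using \eqref{U_V} together with the $\mathbf V$-gradient of $\mathbf W$ from \eqref{generalEntropyVar}, yielding a symmetric $3\times3$ matrix. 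Matching then amounts to six scalar equations, checked entrywise after inserting \eqref{1DdissipationMatrixCoeff} and eliminating $c_s$ via \eqref{cs_theta}. The delicate point is that the coefficients intertwine $e'(\theta)$, $h$, and $c_s$ in a way that forces the particular antisymmetric split $\frac{\rho e'(\theta)}{2(1+e'(\theta))}\mp\frac{\rho\theta v_1}{2c_sh}$ between $d_1$ and $d_3$. A convenient way to organize the algebra is to note that the combinations $d_1+d_3=\frac{\rho e'(\theta)\gamma}{1+e'(\theta)}$ and $d_3-d_1=\frac{\rho\theta v_1\gamma}{c_sh}$ decouple the even (contact-like) and odd (acoustic) parts of the outer-product sum; as a consistency check, the $(1,1)$ entry gives $d_1+d_2+d_3=\rho\gamma=D$, matching $\big(\frac{\partial\mathbf U}{\partial\mathbf W}\big)_{11}$. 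Once both relations in \eqref{scaledMatExist} are confirmed, $\mathbf R$ is a valid scaled eigenvector matrix, $\mathbf D_{i+\frac12}$ in \eqref{3.3} is symmetric positive semidefinite, and the proof is complete.
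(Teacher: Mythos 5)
Your proposal is correct and follows essentially the same route as the paper: reduce the first relation in \eqref{scaledMatExist} to the observation that post-multiplying the eigenvector matrix $\tilde{\mathbf R}$ by a diagonal matrix preserves the eigendecomposition, then compute $\frac{\partial\mathbf U}{\partial\mathbf W}=\frac{\partial\mathbf U}{\partial\mathbf V}\bigl(\frac{\partial\mathbf W}{\partial\mathbf V}\bigr)^{-1}$ by the chain rule and match $\mathbf R\mathbf R^\top=\sum_j d_j\tilde{\mathbf r}_j\tilde{\mathbf r}_j^\top$ against it entrywise using \eqref{cs_theta}. The only differences are cosmetic: the paper simply cites the literature for the eigenvector property of $\tilde{\mathbf R}$ rather than re-deriving it, and your added positivity check $d_1,d_3>0$ (via $c_s|v_1|<1$) is a worthwhile supplement that the paper's proof omits.
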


	\begin{proof}
		Note that $\tilde{\mathbf{R}}$ defined in \eqref{1DDissMat} is a right eigenvector matrix of the Jacobian matrix $\frac{\partial {\mathbf F}_1({\mathbf U})}{\partial {\mathbf U}}$; see   \cite{zhao2013runge}. 
		Since $\sqrt{\tilde{\mathbf{D}}}$ is a diagonal matrix, we know that 
		$\mathbf{R}= \tilde{\mathbf{R}}\sqrt{\tilde{\mathbf{D}}}$ is also a right eigenvector matrix of  $\frac{\partial {\mathbf F}_1({\mathbf U})}{\partial {\mathbf U}}$. Thus, $\mathbf{R}$ satisfies $ \frac{\partial\mathbf{F}_1}{\partial\mathbf{U}}=\mathbf{R}\mathbf{\Lambda}\mathbf{R}^{-1}$. 
		 We only need to verify that $\mathbf{R}$ satisfies  
		  $\frac{\partial \mathbf{U}}{\partial \mathbf{W}}=\mathbf{R}\mathbf{R}^\top$. 
		  Next, we would like to derive the formula of $\frac{\partial \mathbf{U}}{\partial \mathbf{W}}$. 
		  Since $\mathbf{U}$ cannot be explicitly formulated by $\mathbf{W}$, we derive $\frac{\partial \mathbf{U}}{\partial \mathbf{W}}$ by the chain rule
		\begin{equation} \label{U_W_chainRule}
			\frac{\partial \mathbf{U}}{\partial \mathbf{W}}=\frac{\partial \mathbf{U}}{\partial \mathbf{V}}\frac{\partial \mathbf{V}}{\partial \mathbf{W}},
		\end{equation}
		where $\frac{\partial \mathbf{V}}{\partial \mathbf{W}}$ is the inverse of $\frac{\partial\mathbf{W}}{\partial\mathbf{V}}$.
		As $\mathbf{W}$ is explicitly expressed by $\mathbf{V}$, a direct calculation gives 
		\begin{equation} \label{W_V:1D}
			\frac{\partial\mathbf{W}}{\partial\mathbf{V}} = \begin{pmatrix}
				\frac{h}{\rho\theta} & 0 & \frac{\theta-h}{\rho\theta^2}\\
				\frac{\gamma v_1}{\rho\theta} & \frac{\gamma^3}{\theta} & -\frac{\gamma v_1}{\rho\theta^2}\\
				-\frac{\gamma}{\rho\theta} & -\frac{\gamma^3 v_1}{\theta} & \frac{\gamma}{\rho\theta^2}
			\end{pmatrix},
		\end{equation}
		and we then obtain the inverse $\frac{\partial \mathbf{V}}{\partial \mathbf{W}}$ as
		\begin{equation} \label{V_W:1D}
			\frac{\partial\mathbf{V}}{\partial\mathbf{W}} = \begin{pmatrix}
				\rho & \rho\gamma v_1(h-\theta) & \rho\gamma(h-\theta)\\
				0 & \frac{\theta}{\gamma} & \frac{\theta v_1}{\gamma}\\
				\rho\theta & \rho h\gamma\theta v_1 & \rho h \gamma\theta
			\end{pmatrix}.
		\end{equation}
		Combining \eqref{V_W:1D} with \eqref{U_V} for the case $d=1$, we can compute $\frac{\partial \mathbf{U}}{\partial\mathbf{W}}$ by \eqref{U_W_chainRule} as
		\begin{equation} \label{U_W:1D}
			\frac{\partial\mathbf{U}}{\partial\mathbf{W}} = \begin{pmatrix}
				\rho\gamma & \rho h\gamma^2 v_1 & \rho(-\theta+h\gamma^2) \\
				\rho h \gamma^2 v_1 & \rho\gamma^3\left(\theta^2v_1^2(1+e'(\theta))+\theta h+h^2v_1^2\right) & \rho\gamma^3 v_1\left(\theta^2(1+e'(\theta))+h^2+\theta hv_1^2\right)\\
				\rho(-\theta+h\gamma^2) & \rho\gamma^3 v_1\left(\theta^2(1+e'(\theta))+h^2+\theta hv_1^2\right) & \rho\gamma^3\left(\theta^2(1+e'(\theta))+h^2-2\theta h+3\theta hv_1^2\right)
			\end{pmatrix}.
		\end{equation}
		Then we substitute the scaling coefficients $d_j,\ j = 1,2,3$, in \eqref{1DdissipationMatrixCoeff} into the scaling eigenvector matrix \eqref{1DDissMat} to compute $\mathbf{R}\mathbf{R}^\top$. 
		Let ${\bf r}_i$ be the $i$th row of the scaled eigenvector matrix. We have 
		\begin{align*}
		    {\bf r}_1 {\bf r}_1^\top &= d_1+d_2+d_3=\frac{\rho e'(\theta)\gamma}{1+e'(\theta)}+\frac{\rho\gamma}{1+e'(\theta)}=\rho\gamma=\left(\frac{\partial\mathbf{U}}{\partial\mathbf{W}}\right)_{1,1},\\
		    {\bf r}_1 {\bf r}_2^\top &= (v_1-c_s)h\gamma d_1+\left(h-\theta(1+e'(\theta)\right)\gamma v_1d_2+(v_1+c_s)h\gamma d_3\\
		    &=v_1h\gamma^2\frac{\rho e'(\theta)}{1+e'(\theta)}+c_sh\gamma^2\frac{\rho\theta v_1}{c_sh}+\frac{\rho h\gamma^2v_1}{1+e'(\theta)}-\rho\gamma^2v_1\theta
		    =\rho h\gamma^2v_1=\left(\frac{\partial\mathbf{U}}{\partial\mathbf{W}}\right)_{1,2},\\
		    {\bf r}_1 {\bf r}_3^\top &= (1-v_1c_s)h\gamma d_1+\left(h-\theta(1+e'(\theta)\right)\gamma d_2+(1+v_1c_s)h\gamma d_3\\
		    &=h\gamma^2\frac{\rho e'(\theta)}{1+e'(\theta)}+v_1c_sh\gamma\frac{\rho\theta v_1\gamma}{c_sh}+\frac{\rho h\gamma^2}{1+e'(\theta)}-\rho\gamma^2\theta
		    =\rho h\gamma^2-\rho\theta=\left(\frac{\partial\mathbf{U}}{\partial\mathbf{W}}\right)_{1,3},\\
		    {\bf r}_2 {\bf r}_2^\top &= (v_1-c_s)^2h^2\gamma^2d_1+\left(h-\theta(1+e'(\theta)\right)^2\gamma^2v_1^2d_2+(v_1+c_s)^2h^2\gamma^2d_3\\
		    &=(v_1^2+c_s^2)h^2\gamma^3\frac{\rho e'(\theta)}{1+e'(\theta)}+2v_1c_sh^2\gamma^3\frac{\rho\theta v_1}{c_sh}+h^2\gamma^2v_1^2\frac{\rho\gamma}{1+e'(\theta)}\\
		    &\quad+\rho\theta^2(1+e'(\theta))\gamma^3v_1^2-2\theta h(1+e'(\theta))\gamma^2v_1^2\frac{\rho\gamma}{1+e'(\theta)}\\
		    &= \rho h^2\gamma^3v_1^2+\rho\theta h\gamma^3+\rho\theta^2(1+e'(\theta))\gamma^3v_1^2\\
		    &=\rho\gamma^3\left(h^2v_1^2+\theta h+\theta^2 v_1^2(1+e'(\theta))\right)=\left(\frac{\partial\mathbf{U}}{\partial\mathbf{W}}\right)_{2,2},\\
		    {\bf r}_2 {\bf r}_3^\top &= (v_1-c_s)(1-v_1c_s)h^2\gamma^2d_1+\left(h-\theta(1+e'(\theta)\right)^2\gamma^2v_1d_2+(v_1+c_s)(1+v_1c_s)h^2\gamma^2d_3\\
		    &=h^2\gamma^3v_1\frac{\rho e'(\theta)}{1+e'(\theta)}\left(1+c_s^2\right)+c_s(1+v_1^2)h^2\gamma^3\frac{\rho\theta v_1}{c_sh}+\frac{\rho\gamma^3v_1}{1+e'(\theta)}\left(h-\theta(1+e'(\theta))\right)^2\\
		    &=\rho\gamma^3v_1\left(\frac{h^2e'(\theta)}{1+e'(\theta)}+h\theta(2+v_1^2)+\frac{\left(h-\theta(1+e'(\theta))\right)^2}{1+e'(\theta)}\right)\\
		    &=\rho\gamma^3v_1\left(h^2+\theta^2(1+e'(\theta))+h\theta v_1^2\right)=\left(\frac{\partial\mathbf{U}}{\partial\mathbf{W}}\right)_{2,3},\\
		    {\bf r}_3 {\bf r}_3^\top &= (1-v_1c_s)^2h^2\gamma^2d_1+\left(h-\theta(1+e'(\theta))\right)^2\gamma^2d_2+(1+v_1c_s)^2h^2\gamma^2d_3\\
		    &=(1+v_1^2c_s^2)h^2\gamma^2\frac{\rho e'(\theta)\gamma}{1+e'(\theta)}+2v_1c_sh^2\gamma^2\frac{\rho\theta v_1\gamma}{c_sh}+\left(h-\theta(1+e'(\theta))\right)^2\frac{\rho\gamma^3}{1+e'(\theta)}\\
		    &=\rho\gamma^3\left(\frac{h^2e'(\theta)}{1+e'(\theta)}\left(1+\frac{\theta(1+e'(\theta)v_1^2}{he'(\theta)}\right)+2\theta v_1^2h+\frac{\left(h-\theta(1+e'(\theta))\right)^2}{1+e'(\theta)}\right)\\
		    &=\rho\gamma^3\left(h^2+3\theta hv_1^2-2\theta h+\theta^2(1+e'(\theta))\right)=\left(\frac{\partial\mathbf{U}}{\partial\mathbf{W}}\right)_{3,3}.
		\end{align*}
		Noting that  both matrices ${\bf R} {\bf R}^\top$ and $\frac{\partial\mathbf{U}}{\partial\mathbf{W}}$ are symmetric, we have
		\begin{equation*}
		    {\bf r}_2 {\bf r}_1^\top = \left(\frac{\partial\mathbf{U}}{\partial\mathbf{W}}\right)_{2,1},\quad
		    {\bf r}_3 {\bf r}_1^\top = \left(\frac{\partial\mathbf{U}}{\partial\mathbf{W}}\right)_{3,1},\quad
		   {\bf r}_3 {\bf r}_2^\top = \left(\frac{\partial\mathbf{U}}{\partial\mathbf{W}}\right)_{3,2}.
		\end{equation*}
		Hence, we have verified that $\frac{\partial \mathbf{U}}{\partial \mathbf{W}}=\mathbf{R}\mathbf{R}^\top$. The proof is completed.

	\end{proof}

	The dissipation matrix $\mathbf{D}_{i+\frac{1}{2}}$ is defined at the cell interface $x_{i+\frac12}$. In order to calculate it, we need to estimate the ``averaged'' states at 
	$x_{i+\frac12}$. Following the discussion in \cite{chandrashekar2013kinetic} for the non-relativistic Euler equations, we seek an appropriate average for $\mathbf{D}_{i+\frac{1}{2}}$, such that the resulting ES scheme can accurately resolve stationary contact discontinuities. Consider the following initial condition
	\begin{equation} \label{SCD:init}
		\rho(x,0) = \left\{
		\begin{aligned}
			&\rho_L,\quad x<0, \\
			&\rho_R,\quad x>0,
		\end{aligned}
		\right.
		\quad\quad
		v_1(x,0) = 0,\quad\quad
		p(x,0) = p = const,
	\end{equation}
	which represents a stationary contact discontinuity corresponding to the $\lambda_2-$field. 
	For the above initial condition, our EC numerical flux reduces to 
	$\widetilde{\mathbf{F}}_{i+\frac{1}{2}}=
		(0, p, 0 )^\top.$ 
	Hence, in order to preserve the stationary contact discontinuity, we require for all $i$ that 
	$\widehat{\mathbf{F}}_{i+\frac{1}{2}}=
	(0, p, 0 )^\top,$ or equivalently, 
	\begin{equation} \label{SCD:relation}
		\mathbf{D}_{i+\frac{1}{2}}\left[\!\left[\mathbf{W}\right]\!\right]_{i+\frac{1}{2}}={\mathbf 0}. 
	\end{equation}

	\begin{theorem} \label{thm:enthalpyEvaluation}
		Assume that the ``averaged" specific enthalpy $h_{i+\frac{1}{2}}$ in the dissipation matrix $\mathbf{D}_{i+\frac{1}{2}}$ satisfies
		\begin{equation} \label{3.13}
			h_{i+\frac{1}{2}}  
			={\mathcal E}_{i+\frac{1}{2}}+\frac{1}{\left\{\!\left\{ z_2 \right\}\!\right\}_{\ln,{i+\frac{1}{2}}}},
		\end{equation}
		where $z_2=\frac{\rho}{p}=\frac{1}{\theta}$ and the calculation of ${\mathcal E}$ is given in Theorem \ref{thm:1De}. Then, the ES scheme \eqref{2.1} with numerical flux \eqref{3.1} and Roe-type dissipation term can exactly resolve stationary contact discontinuities.
	\end{theorem}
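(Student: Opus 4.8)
The plan is to verify the equivalent requirement \eqref{SCD:relation}, namely $\mathbf{D}_{i+\frac{1}{2}}[\![\mathbf{W}]\!]_{i+\frac{1}{2}} = \mathbf{0}$, to which the discussion preceding the theorem has already reduced the exact resolution of a stationary contact discontinuity. The first step is to specialize every ingredient of $\mathbf{D}_{i+\frac{1}{2}} = \mathbf{R}_{i+\frac{1}{2}}|\mathbf{\Lambda}_{i+\frac{1}{2}}|\mathbf{R}_{i+\frac{1}{2}}^\top$ to the data \eqref{SCD:init}. Since $v_1 \equiv 0$ on both sides, any consistent interface average yields $v_1 = 0$ and hence $\gamma = 1$ at $x_{i+\frac{1}{2}}$, so the eigenvalues collapse to $\lambda_1 = -c_s$, $\lambda_2 = 0$, $\lambda_3 = c_s$. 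With the Roe-type choice \eqref{Roe-type} this gives $|\mathbf{\Lambda}_{i+\frac{1}{2}}| = \mathrm{diag}\{c_s, 0, c_s\}$. I would stress that the vanishing middle entry $|\lambda_2| = 0$ is precisely what annihilates the $\lambda_2$-eigenvector contribution, so the Roe-type is essential here; the Rusanov-type \eqref{Rusanov-type} replaces $|\mathbf{\Lambda}|$ by $c_s\mathbf{I}$ and would retain that contribution.

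Next I would compute $\mathbf{R}_{i+\frac{1}{2}}^\top[\![\mathbf{W}]\!]_{i+\frac{1}{2}}$. Because $W_2 = \gamma v_1/\theta = 0$ on both sides, the jump simplifies to $[\![\mathbf{W}]\!] = ([\![W_1]\!], 0, [\![W_3]\!])^\top$. Evaluating the scaled eigenvector columns in \eqref{1DDissMat} at $v_1 = 0$, $\gamma = 1$, the first and third columns become $\sqrt{d_1}(1, -c_s h, h)^\top$ and $\sqrt{d_3}(1, c_s h, h)^\top$, where $h$ is the interface-averaged enthalpy $h_{i+\frac{1}{2}}$ built into $\mathbf{R}_{i+\frac{1}{2}}$. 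Dotting each column with $[\![\mathbf{W}]\!]$ kills the middle entry and leaves the first and third components of $\mathbf{R}^\top[\![\mathbf{W}]\!]$ both proportional (with factors $\sqrt{d_1}$ and $\sqrt{d_3}$) to the single quantity $[\![W_1]\!] + h_{i+\frac{1}{2}}[\![W_3]\!]$. After applying $|\mathbf{\Lambda}_{i+\frac{1}{2}}|$ the middle component is discarded, so $\mathbf{D}_{i+\frac{1}{2}}[\![\mathbf{W}]\!]$ equals $c_s\,([\![W_1]\!]+h_{i+\frac{1}{2}}[\![W_3]\!])$ times a fixed combination of the first and third eigenvectors. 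Hence the whole statement reduces to the scalar identity $[\![W_1]\!] + h_{i+\frac{1}{2}}[\![W_3]\!] = 0$.

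The final step establishes this identity using the prescribed averaging \eqref{3.13}. From \eqref{generalEntropyVar} at $\gamma = 1$ one has $[\![W_3]\!] = -[\![z_2]\!]$, while rearranging \eqref{1.8} (with $z_1 = \rho$) gives $[\![W_1]\!] = \mathcal{E}_{i+\frac{1}{2}}[\![z_2]\!] + [\![\ln z_1]\!]$. Substituting $h_{i+\frac{1}{2}} = \mathcal{E}_{i+\frac{1}{2}} + 1/\{\!\{z_2\}\!\}_{\ln}$ cancels the $\mathcal{E}_{i+\frac{1}{2}}[\![z_2]\!]$ terms and yields $[\![W_1]\!]+h_{i+\frac{1}{2}}[\![W_3]\!] = [\![\ln\rho]\!] - [\![z_2]\!]/\{\!\{z_2\}\!\}_{\ln}$. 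Invoking the logarithmic-mean identity $[\![z_2]\!]/\{\!\{z_2\}\!\}_{\ln} = [\![\ln z_2]\!]$ and then $z_2 = \rho/p$ with $p$ constant across the contact (so $[\![\ln z_2]\!] = [\![\ln\rho]\!]$), the right-hand side vanishes. This forces $\mathbf{D}_{i+\frac{1}{2}}[\![\mathbf{W}]\!]_{i+\frac{1}{2}} = \mathbf{0}$ for every $i$, hence $\widehat{\mathbf{F}}_{i+\frac{1}{2}} = (0,p,0)^\top$ and the stationary contact discontinuity is preserved exactly.

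The main obstacle is not the eigenstructure bookkeeping but the precise scalar cancellation in the last step: the enthalpy average \eqref{3.13} must be chosen so that the $\mathcal{E}_{i+\frac{1}{2}}$ contribution is eliminated and the residual density jump collapses through the logarithmic mean. Identifying \eqref{3.13} as the correct closure — rather than an arithmetic or some other mean — is the crux, and it hinges on reading off the relation \eqref{1.8} to express $[\![W_1]\!]$ through $[\![z_2]\!]$ and $[\![\ln\rho]\!]$. I would also make explicit that constant pressure across the $\lambda_2$-field is what finally sends $[\![\ln z_2]\!] - [\![\ln\rho]\!]$ to zero.
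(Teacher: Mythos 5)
Your proposal is correct and follows essentially the same route as the paper: specialize the dissipation matrix at $v_1=0$ so the Roe-type $|\mathbf{\Lambda}|=\mathrm{diag}\{c_s,0,c_s\}$ kills the contact mode, reduce $\mathbf{D}_{i+\frac12}[\![\mathbf{W}]\!]_{i+\frac12}=\mathbf{0}$ to the scalar identity $[\![W_1]\!]+h_{i+\frac12}[\![W_3]\!]=[\![z_2h-S]\!]-h_{i+\frac12}[\![z_2]\!]=0$, and verify it via \eqref{1.8}, the logarithmic-mean identity, and constancy of the pressure. The only cosmetic difference is that you isolate the scalar residual first and then cancel, whereas the paper computes $h_{i+\frac12}$ forward to show it equals $[\![W_1]\!]/[\![z_2]\!]$; the content is identical.
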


	\begin{proof}
		For the stationary contact wave \eqref{SCD:init} with $v_1=0$, the dissipation matrix $\mathbf{D}$ reduces to 
		\begin{equation} \label{3.5}
			\begin{aligned}
				\mathbf{D} &=\begin{pmatrix}
					1 & 1 & 1 \\
					-c_sh & 0 & c_sh \\
					h & h-\theta\left(1+e'(\theta)\right) & h 
				\end{pmatrix}
				\begin{pmatrix}
					c_s & 0 & 0 \\
					0 & 0 & 0 \\
					0 & 0 & c_s  
				\end{pmatrix}
				\begin{pmatrix}
					d_1 & 0 & 0 \\
					0 & d_2 & 0 \\
					0 & 0 & d_3 
				\end{pmatrix}
				\begin{pmatrix}
					1 & 1 & 1 \\
					-c_sh & 0 & c_sh \\
					h & h-\theta\left(1+e'(\theta)\right) & h 
				\end{pmatrix}^\top
			\end{aligned}
		\end{equation}
		with 
		\begin{equation} \label{3.12}
			d_1=d_3=\frac{\rho e'(\theta)}{2(1+e'(\theta))},\quad d_2 = \frac{\rho}{1+e'(\theta)}.
		\end{equation}
		Taking $v_1=0$ and $z_2 = \frac{1}{\theta}$, 
		the entropy variables in \eqref{generalEntropyVar} become  
		\begin{equation} \label{3.6}
			\mathbf{W} = \begin{pmatrix}
				z_2h-S & 0 & -z_2
			\end{pmatrix}^\top.
		\end{equation}
		It follows that 
		\begin{equation} \label{3.10}
			\mathbf{D}_{i+\frac{1}{2}}\left[\!\left[\mathbf{W}\right]\!\right]_{i+\frac{1}{2}} = (c_s)_{i+\frac{1}{2}}\ \frac{\rho_{i+\frac{1}{2}} e'(\theta_{i+\frac{1}{2}})}{1+e'(\theta_{i+\frac{1}{2}})}\left(\left[\!\left[z_2 h-S\right]\!\right]_{i+\frac{1}{2}}-h_{i+\frac{1}{2}}\left[\!\left[z_2\right]\!\right]_{i+\frac{1}{2}}\right)\begin{pmatrix}
				1\\
				0\\
				h_{i+\frac{1}{2}}
			\end{pmatrix}.
		\end{equation}
		For the stationary contact wave \eqref{SCD:init} with constant pressure, we have
		\begin{equation} \label{SCD:pconst}
			[\![\ln p]\!]_{i+\frac12} = 0,\qquad [\![\ln z_2]\!]_{i+\frac12} = [\![\ln \rho]\!]_{i+\frac12}-[\![\ln p]\!]_{i+\frac12}=[\![\ln \rho]\!]_{i+\frac12}.
		\end{equation}
		If $h_{i+\frac12}$ satisfies \eqref{3.13}, we obtain 
			\begin{align*}
			h_{i+\frac{1}{2}} &= 
			{\mathcal E}_{i+\frac{1}{2}}+\frac{1}{\left\{\!\left\{ z_2 \right\}\!\right\}_{\ln,{i+\frac{1}{2}}}}
			\\ &
		 \overset{\eqref{logAVG}}{=}  {\mathcal E}_{i+\frac{1}{2}} 
		 + \frac{\left[\!\left[\ln (z_2) \right]\!\right]_{i+\frac{1}{2}}}{\left[\!\left[z_2\right]\!\right]_{i+\frac{1}{2}}}
		 \\ &
		  \overset{\eqref{SCD:pconst}}{=} {\mathcal E}_{i+\frac{1}{2}}+\frac{\left[\!\left[\ln(\rho)\right]\!\right]_{i+\frac{1}{2}}}{\left[\!\left[z_2\right]\!\right]_{i+\frac{1}{2}}}
		  \\
		  &
		 \overset{\eqref{1.8}}{=} 
			 \frac{\left[\!\left[W_1-\ln(\rho)\right]\!\right]_{i+\frac{1}{2}}}{\left[\!\left[z_2\right]\!\right]_{i+\frac{1}{2}}}+\frac{\left[\!\left[\ln(\rho)\right]\!\right]_{i+\frac{1}{2}}}{\left[\!\left[z_2\right]\!\right]_{i+\frac{1}{2}}} 
			 \\ &
			 = \frac{\left[\!\left[W_1\right]\!\right]_{i+\frac{1}{2}}}{\left[\!\left[z_2\right]\!\right]_{i+\frac{1}{2}}}
			 =  \frac{\left[\!\left[z_2 h-S\right]\!\right]_{i+\frac{1}{2}}}{\left[\!\left[z_2\right]\!\right]_{i+\frac{1}{2}}},
	\end{align*}
	which implies 
	$$
	\left[\!\left[z_2 h-S\right]\!\right]_{i+\frac{1}{2}}=h_{i+\frac{1}{2}}\left[\!\left[z_2\right]\!\right]_{i+\frac{1}{2}}.
	$$
	This together with \eqref{3.10} yields $\mathbf{D}_{i+\frac{1}{2}}\left[\!\left[\mathbf{W}\right]\!\right]_{i+\frac{1}{2}} = {\mathbf 0}$. 
	The proof is completed.
	\end{proof}

The formula \eqref{3.13} determines the averaged state $h_{i+\frac{1}{2}}$, with 
$(z_2)_{i+\frac{1}{2}} = \left\{\!\left\{ z_2 \right\}\!\right\}_{\ln,{i+\frac{1}{2}}}$. 
We take $\theta_{i+\frac{1}{2}}=\frac{1}{\left\{\!\left\{ z_2 \right\}\!\right\}_{\ln,{i+\frac{1}{2}}}}$. 
The rest-mass density $\rho$ and the velocity $v_1$ can be evaluated by either the arithmetic or logarithmic average. In this paper, we choose the logarithmic average for $\rho_{i+\frac{1}{2}}$ and the arithmetic average for $(v_1)_{i+\frac{1}{2}}$. Other
quantities in $\mathbf{D}_{i+\frac{1}{2}}$, such as $c_s$, $\gamma$, $e'(\theta)$, are computed by using the averaged  states $\theta_{i+\frac{1}{2}}, \rho_{i+\frac{1}{2}}$, and $(v_1)_{i+\frac{1}{2}}$.

	\begin{remark}
		For ID-EOS \eqref{ID-EOS}, the evaluation for $h_{i+\frac{1}{2}}$ in the dissipation matrix $\mathbf{D}_{i+\frac{1}{2}}$ in \cite{duan2019high} is calculated by taking $(z_2)_{i+\frac{1}{2}}$ as the  logarithmic average. This is consistent with our result in Theorem \ref{thm:enthalpyEvaluation}, because 
		\begin{align*} 
			h_{i+\frac{1}{2}} &= {\mathcal E}_{i+\frac{1}{2}}+\frac{1}{\left\{\!\left\{ z_2 \right\}\!\right\}_{\ln,{i+\frac{1}{2}}}} 
			\\
			&\overset{\eqref{e_hat:ideal}}{=} 1+\frac{1}{(\Gamma-1)\left\{\!\left\{ z_2 \right\}\!\right\}_{\ln,{i+\frac{1}{2}}}}+\frac{1}{\left\{\!\left\{ z_2 \right\}\!\right\}_{\ln,{i+\frac{1}{2}}}}
			\\
			&=1+\frac{\Gamma}{(\Gamma-1)\left\{\!\left\{ z_2 \right\}\!\right\}_{\ln,{i+\frac{1}{2}}}}.
		\end{align*}
		
	\end{remark}

\subsubsection{High-order entropy stable schemes}
As mentioned previously, the numerical scheme \eqref{2.1} using the numerical flux \eqref{3.1} is only first-order accurate.  This is due to the calculation of the jump $\left[\!\left[\mathbf{W}\right]\!\right]_{i+\frac{1}{2}}$ at the cell interface $x_{i+\frac{1}{2}}$ using only $\mathbf{W}_{i}$ and $\mathbf{W}_{i+1}$. To achieve higher-order accuracy for the ES schemes, it is necessary to estimate the jump more precisely \cite{fjordholm2012arbitrarily}. This can be accomplished by employing ENO or WENO reconstruction techniques for the scaled entropy variables ${\bm \omega}:=\mathbf{R}_{i+\frac{1}{2}}^\top\mathbf{W}$. The reconstructed values of the scaled entropy variables ${\bm \omega}$, denoted by ${\bm \omega}_{i+\frac{1}{2}}^-$ and ${\bm \omega}_{i+\frac{1}{2}}^+$ for the left and right limiting values at the interface $x_{i+\frac{1}{2}}$, respectively. For the ENO-based method \cite{fjordholm2012arbitrarily}, the corresponding $2k$th-order ES flux is defined by adding the $2k$th-order dissipation terms to the $2k$th-order EC flux:
\begin{equation} \label{ENObased}
	\widehat{\mathbf{F}}_{i+\frac{1}{2}}^{2k}=\widetilde{\mathbf{F}}_{i+\frac{1}{2}}^{2k}-\frac{1}{2}\mathbf{R}_{i+\frac{1}{2}}\left|\mathbf{\Lambda}_{i+\frac{1}{2}}\right|\left \llangle{\bm \omega}\right\rrangle_{i+\frac{1}{2}}^{ENO},
\end{equation}
where $\widetilde{\mathbf{F}}_{i+\frac{1}{2}}^{2k}$ is the $2k$th-order EC flux defined in \eqref{2.5}, $\mathbf{R}_{i+\frac{1}{2}}$ and $\mathbf{\Lambda}_{i+\frac{1}{2}}$ are defined in \eqref{3.3}, and $\left \llangle{\bm \omega}\right\rrangle_{i+\frac{1}{2}}^{ENO}:={\bm \omega}_{i+\frac{1}{2}}^+-{\bm \omega}_{i+\frac{1}{2}}^-$ denotes the jump of the scaled entropy variables at the interface $x_{i+\frac{1}{2}}$.
For the scheme \eqref{2.1} using the $2k$th-order ES numerical flux defined in \eqref{ENObased}, we have
\begin{equation} \label{ENOESComTemp}
	\begin{aligned}
		\frac{d}{dt}\eta(\mathbf{U}_i)
		&=-\frac{\mathbf{W}_i^\top\Big(\widetilde{\mathbf{F}}_{i+\frac{1}{2}}^{2k}-\widetilde{\mathbf{F}}_{i-\frac{1}{2}}^{2k}\Big)}{\Delta x} +\frac{\mathbf{W}_i^\top\Big(\mathbf{R}_{i+\frac{1}{2}}\left|\mathbf{\Lambda}_{i+\frac{1}{2}}\right|\left\llangle{\bm \omega}\right\rrangle_{i+\frac{1}{2}}^{ENO}-\mathbf{R}_{i-\frac{1}{2}}\left|\mathbf{\Lambda}_{i-\frac{1}{2}}\right|\left\llangle{\bm \omega}\right\rrangle_{i-\frac{1}{2}}^{ENO}\Big)}{2\Delta x}\\
		&=-\frac{\widehat{q}^{2k}_{i+\frac{1}{2}}-\widehat{q}^{2k}_{i-\frac{1}{2}}+\frac{1}{4} \left([\![{\bm \omega}]\!]_{i+\frac{1}{2}}^\top\left|\mathbf{\Lambda}_{i+\frac{1}{2}}\right|\left\llangle{\bm \omega}\right\rrangle_{i+\frac{1}{2}}^{ENO}+[\![{\bm \omega}]\!]_{i-\frac{1}{2}}^\top\left|\mathbf{\Lambda}_{i-\frac{1}{2}}\right|\left\llangle{\bm \omega}\right\rrangle_{i-\frac{1}{2}}^{ENO}\right)}{\Delta x},
	\end{aligned}
\end{equation}
where 
\begin{equation} \label{high-orderESThm}
	\widehat{q}_{i+\frac{1}{2}}^{2k}=\widetilde{q}_{i+\frac{1}{2}}^{2k}-\frac{1}{2}\{\!\{\mathbf{W}\}\!\}_{i+\frac{1}{2}}^\top\mathbf{R}_{i+\frac{1}{2}}\left|\mathbf{\Lambda}_{i+\frac{1}{2}}\right|\left\llangle{\bm \omega}\right\rrangle_{i+\frac{1}{2}}^{ENO}
\end{equation}
with $\widetilde{q}_{i+\frac{1}{2}}^{2k}$ defined in \eqref{2.8}.
Since the ENO reconstruction satisfies the sign property \cite{fjordholm2013eno}:
\begin{equation} \label{signproperty}
	{\rm sign}(\left\llangle{\bm \omega}\right\rrangle_{i+\frac{1}{2}}^{ENO})={\rm sign}([\![{\bm \omega}]\!]_{i+\frac{1}{2}}),
\end{equation}
then we have
\begin{equation*}
	\frac{d}{dt}\eta(\mathbf{U}_i)+\frac{1}{\Delta x}\left(\widehat{q}_{i+\frac{1}{2}}^{2k}-\widehat{q}_{i-\frac{1}{2}}^{2k}\right)=-\frac{1}{4\Delta x}  \Big([\![{\bm \omega}]\!]_{i+\frac{1}{2}}^\top\left|\mathbf{\Lambda}_{i+\frac{1}{2}}\right|\left\llangle{\bm \omega}\right\rrangle_{i+\frac{1}{2}}^{ENO}+[\![{\bm \omega}]\!]_{i-\frac{1}{2}}^\top\left|\mathbf{\Lambda}_{i-\frac{1}{2}}\right|\left\llangle{\bm \omega}\right\rrangle_{i-\frac{1}{2}}^{ENO}\Big)\leq 0,
\end{equation*}
which implies the discrete entropy inequality \eqref{3.2} for the numerical entropy flux \eqref{high-orderESThm}. Hence, the scheme \eqref{2.1} with the ENO-based numerical flux \eqref{ENObased} is ES.

For the WENO-based method, we follow the idea in \cite{biswas2018low}. 
The WENO-based high-order accurate ES flux is defined as
\begin{equation} \label{WENObased}
	\widehat{\mathbf{F}}_{i+\frac{1}{2}}^{2k}=\widetilde{\mathbf{F}}_{i+\frac{1}{2}}^{2k}-\frac{1}{2}\mathbf{R}_{i+\frac{1}{2}}\left|\mathbf{\Lambda}_{i+\frac{1}{2}}\right|\left \llangle{\bm \omega}\right\rrangle_{i+\frac{1}{2}}^{WENO},
\end{equation}
where the $lth$ component of the jump $\left \llangle{\bm \omega}\right\rrangle_{i+\frac{1}{2}}^{WENO}$ of the scaled entropy variable at the interface $x_{i+\frac{1}{2}}$ is defined by
\begin{equation*}
	\left\llangle{\omega}_l\right\rrangle_{i+\frac{1}{2}}^{WENO}=\theta_{l,i+\frac{1}{2}}\left({\omega}_{l,i+\frac{1}{2}}^+-{\omega}_{l,i+\frac{1}{2}}^-\right)
\end{equation*}
with
\begin{equation} \label{switchop}
	\theta_{l,i+\frac{1}{2}}:=\left\{
	\begin{aligned}
		&1,\quad \mbox{if}~({\omega}_{l,i+\frac{1}{2}}^+-{ \omega}_{l,i+\frac{1}{2}}^-)[\![\omega_l]\!]_{i+\frac{1}{2}}>0, \\
		&0,\quad {\rm otherwise}.
	\end{aligned}
	\right.
\end{equation}
The switch operator $\theta_{l,i+\frac{1}{2}}$ in \eqref{switchop} is introduced to ensure the sign property
\begin{equation} \label{WENOsignproperty}
	{\rm sign}(\left\llangle{\bm \omega}\right\rrangle_{i+\frac{1}{2}}^{WENO})={\rm sign}([\![{\bm \omega}]\!]_{i+\frac{1}{2}}).
\end{equation}
Hence, by using the same approach as ENO-based method, we can verify that the scheme \eqref{2.1} with the WENO-based numerical flux \eqref{WENObased} is ES, and the corresponding numerical entropy flux is given by 
\begin{equation} \label{WENOhigh-orderESThm}
	\widehat{q}_{i+\frac{1}{2}}^{2k}=\widetilde{q}_{i+\frac{1}{2}}^{2k}-\frac{1}{2}\{\!\{\mathbf{W}\}\!\}_{i+\frac{1}{2}}^\top\mathbf{R}_{i+\frac{1}{2}}\left|\mathbf{\Lambda}_{i+\frac{1}{2}}\right|\left\llangle{\bm \omega}\right\rrangle_{i+\frac{1}{2}}^{WENO}. 
\end{equation}

\section{2D entropy stable schemes}\label{section:4}

The EC and ES schemes for the 2D RHD equations can be constructed in a dimension-by-dimension fashion, and the construction is analogous to the 1D case. Hence we only present the derivation of two-point EC fluxes and the dissipation matrix, which are the key ingredients of EC and ES schemes.

\subsection{Two-point entropy conservative flux}

In this subsection, we derive a unified formula of two-point EC numerical fluxes for the 2D RHD equations with  general Synge-type EOS \eqref{eq:gEOS}, based on the entropy variables \eqref{lem:generalEOS:entropyVarPot} and the entropy potential \eqref{generalEntropyPot}. To design a simple two-point EC numerical flux, we choose a set of variables $\textbf{z}=(z_1,z_2,z_3,z_4)^\top$ as
\begin{equation} \label{2DalgeVar}
	z_1 = \rho,\quad z_2 = \frac{\rho}{p},\quad z_3 = \gamma v_1,\quad z_4 = \gamma v_2.
\end{equation}

\begin{theorem} \label{thm:2DECflux}
	The two-point EC numerical fluxes  for the 2D RHD equations with general Synge-type EOS \eqref{eq:gEOS} can be written into a unified form as
	\begin{equation} \label{2D2ptECFlux_x}
		\widetilde{\textbf{F}_1}^{EC}(\mathbf{U}_L,\mathbf{U}_R)=\bigg(\{\!\{z_1\}\!\}_{\ln}\{\!\{z_3\}\!\},\ \widehat{\rho h}\{\!\{z_3\}\!\}^2+\frac{\{\!\{z_1\}\!\}}{\{\!\{z_2\}\!\}},\ \widehat{\rho h}\{\!\{z_3\}\!\}\{\!\{z_4\}\!\},\ \widehat{\rho h}\{\!\{\gamma\}\!\}\{\!\{z_3\}\!\}\bigg)^\top,
	\end{equation}
	and
	\begin{equation} \label{2D2ptECFlux_y}
		\widetilde{\textbf{F}_2}^{EC}(\mathbf{U}_L,\mathbf{U}_R)=\bigg(\{\!\{z_1\}\!\}_{\ln}\{\!\{z_4\}\!\},\ \widehat{\rho h}\{\!\{z_3\}\!\}\{\!\{z_4\}\!\},\ \widehat{\rho h}\{\!\{z_4\}\!\}^2+\frac{\{\!\{z_1\}\!\}}{\{\!\{z_2\}\!\}},\  \widehat{\rho h}\{\!\{\gamma\}\!\}\{\!\{z_4\}\!\}\bigg)^\top
	\end{equation}
	with 
	\begin{equation*}
	    \widehat{\rho h}:=\frac{\frac{\{\!\{z_1\}\!\}}{\{\!\{z_2\}\!\}}+\{\!\{z_1\}\!\}_{\ln}{\mathcal E}}{\{\!\{\gamma\}\!\}^2-\{\!\{z_3\}\!\}^2-\{\!\{z_4\}\!\}^2},
	\end{equation*}
	where $\mathcal E$ is defined in \eqref{1.8}, and a unified formulation of $\mathcal E$ is given by \eqref{1.8:int} via an integral. 
		The explicit calculation of ${\mathcal E}$ depends on the particular choice of the EOS (see Theorem \ref{thm:1De}). 
\end{theorem}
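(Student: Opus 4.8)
The plan is to follow exactly the strategy used in the proof of Theorem~\ref{thm:ECflux}, generalized from three to four parameter variables. First I would express the entropy variables $\mathbf{W}$ of \eqref{generalEntropyVar} and the potential fluxes $\psi_1,\psi_2$ of \eqref{generalEntropyPot} in terms of the set $\mathbf{z}=(z_1,z_2,z_3,z_4)^\top$ chosen in \eqref{2DalgeVar}. Using $1/\theta=z_2$ and the relation $\gamma=\sqrt{1+z_3^2+z_4^2}$, which follows from $\gamma^2|\mathbf{v}|^2=z_3^2+z_4^2$, one obtains
\[
\mathbf{W}=\left(z_2h-S,\ z_2z_3,\ z_2z_4,\ -z_2\sqrt{1+z_3^2+z_4^2}\right)^\top,\qquad \psi_1=z_1z_3,\qquad \psi_2=z_1z_4.
\]

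Next I would rewrite each jump $[\![W_j]\!]$ and $[\![\psi_i]\!]$ in terms of the jumps and arithmetic averages of the $z_k$, using the algebraic identities \eqref{product}, \eqref{square}, \eqref{sqrt} together with the defining relation \eqref{1.8} for $\mathcal{E}$ (so that $[\![W_1]\!]=\mathcal{E}[\![z_2]\!]+[\![\ln z_1]\!]$). The only genuinely new computation relative to the 1D case is the jump of $W_4=-z_2\gamma$: since $\gamma$ now depends on both in-plane velocities, applying \eqref{sqrt} and \eqref{square} to $1+z_3^2+z_4^2$ gives $[\![\gamma]\!]=\bigl(\{\!\{z_3\}\!\}[\![z_3]\!]+\{\!\{z_4\}\!\}[\![z_4]\!]\bigr)/\{\!\{\gamma\}\!\}$, so that $[\![W_4]\!]$ couples \emph{both} $[\![z_3]\!]$ and $[\![z_4]\!]$.

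I would then impose the EC condition \eqref{ECdef}, i.e.\ $[\![\mathbf{W}]\!]^\top\widetilde{\mathbf{F}}_1^{EC}=[\![\psi_1]\!]$, substitute all the jump expressions, and collect the coefficients of the four independent jumps $[\![z_1]\!],[\![z_2]\!],[\![z_3]\!],[\![z_4]\!]$. Setting each coefficient to zero produces a $4\times4$ linear system for the flux components $\bigl(\widetilde{F}_1^{(1)},\dots,\widetilde{F}_1^{(4)}\bigr)$: the $[\![z_1]\!]$-coefficient fixes $\widetilde{F}_1^{(1)}=\{\!\{z_1\}\!\}_{\ln}\{\!\{z_3\}\!\}$; the $[\![z_4]\!]$- and $[\![z_3]\!]$-coefficients express $\widetilde{F}_1^{(3)}$ and $\widetilde{F}_1^{(2)}$ in terms of $\widetilde{F}_1^{(4)}$; and inserting these into the $[\![z_2]\!]$-coefficient closes the system on $\widetilde{F}_1^{(4)}$, producing precisely the factor $\widehat{\rho h}$ of \eqref{2D2ptECFlux_x} with denominator $\{\!\{\gamma\}\!\}^2-\{\!\{z_3\}\!\}^2-\{\!\{z_4\}\!\}^2$. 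This recovers \eqref{2D2ptECFlux_x}, and by the $z_3\leftrightarrow z_4$ symmetry of the whole construction the EC condition for $i=2$ yields \eqref{2D2ptECFlux_y}. The integral reformulation \eqref{1.8:int} of $\mathcal{E}$ was already established in Theorem~\ref{thm:ECflux} and carries over verbatim, so nothing new is needed there.

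The main obstacle I anticipate is the bookkeeping in the coefficient-collection step: because $[\![W_4]\!]$ now contributes to both the $[\![z_3]\!]$ and $[\![z_4]\!]$ columns, the resulting $4\times4$ system is more coupled than the $3\times3$ system of the 1D proof. The key observation that keeps it tractable is that the system remains triangular for the right elimination order, solving first for $\widetilde{F}_1^{(1)}$, then expressing $\widetilde{F}_1^{(3)}$ and $\widetilde{F}_1^{(2)}$ in terms of $\widetilde{F}_1^{(4)}$, and finally closing on $\widetilde{F}_1^{(4)}$, so that no genuine $4\times4$ matrix inversion is required.
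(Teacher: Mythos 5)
Your proposal is correct and follows essentially the same route as the paper's proof: express $\mathbf{W}$, $\psi_1$, $\psi_2$ in the parameter variables \eqref{2DalgeVar}, decompose the jumps via \eqref{product}--\eqref{sqrt} and \eqref{1.8}, impose \eqref{ECdef}, collect the coefficients of $[\![z_1]\!],\dots,[\![z_4]\!]$, and solve the resulting linear system, with the $x_2$-flux obtained by the $z_3\leftrightarrow z_4$ symmetry. Your observation that $[\![W_4]\!]$ couples both $[\![z_3]\!]$ and $[\![z_4]\!]$ and that the system is nonetheless solvable by a simple elimination order matches exactly what the paper does.
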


\begin{proof}
	By using the set of variables \eqref{2DalgeVar}, we can express the entropy variables and the entropy potential as
	\begin{equation*}
		\mathbf{W}=\left(z_2h-S, z_3z_2, z_4z_2, -z_2\sqrt{1+z_3^2+z_4^2}\right)^\top,
	\end{equation*}
	\begin{equation*}
		\psi_1=z_1z_3,\qquad \psi_2=z_1z_4.
	\end{equation*}
	Then we write the jumps of entropy variables \textbf{W} and the potential fluxes $\psi_1,\ \psi_2$ in terms of jumps and arithmetic averages of the variables \eqref{2DalgeVar} as follows:
		\begin{equation} \label{thm6:temp}
			\begin{aligned}
				\relax[\![W_1]\!] &\overset{\eqref{1.8}}{=}{\mathcal E}[\![z_2]\!]+[\![\ln{z_1}]\!],\\
				[\![W_2]\!] &= [\![z_3z_2]\!] \overset{\eqref{product}}{=} \{\!\{z_3\}\!\}[\![z_2]\!]+[\![z_3]\!]\{\!\{z_2\}\!\},\\
				[\![W_3]\!] &= [\![z_4z_2]\!] \overset{\eqref{product}}{=} \{\!\{z_4\}\!\}[\![z_2]\!]+[\![z_4]\!]\{\!\{z_2\}\!\},\\
				[\![W_4]\!] &= \left[\!\left[-z_2\sqrt{1+z_3^2+z_4^2}\right]\!\right]
				\\
				& \overset{\eqref{product}}{=}-[\![z_2]\!]\{\!\{\gamma\}\!\}-\{\!\{z_2\}\!\}\left[\!\left[\sqrt{1+z_3^2+z_4^2}\right]\!\right]\\
				&\overset{\eqref{sqrt}}{=}-[\![z_2]\!]\{\!\{\gamma\}\!\}-\{\!\{z_2\}\!\}\frac{[\![1+z_3^2+z_4^2]\!]}{2\{\!\{\gamma\}\!\}}\\
				&\overset{\eqref{square}}{=} -[\![z_2]\!]\{\!\{\gamma\}\!\}-\frac{\{\!\{z_2\}\!\}\left(\{\!\{z_3\}\!\}[\![z_3]\!]+\{\!\{z_4\}\!\}[\![z_4]\!]\right)}{\{\!\{\gamma\}\!\}},\\
				[\![\psi_1]\!] &= [\![z_1z_3]\!] \overset{\eqref{product}}{=} \{\!\{z_1\}\!\}[\![z_3]\!]+[\![z_1]\!]\{\!\{z_3\}\!\},\\
				[\![\psi_2]\!] &= [\![z_1z_4]\!] \overset{\eqref{product}}{=} \{\!\{z_1\}\!\}[\![z_4]\!]+[\![z_1]\!]\{\!\{z_4\}\!\},
			\end{aligned}
	\end{equation}
According to Definition \ref{def:EC}, the two-point EC numerical fluxes $\widetilde{\textbf{F}}^{EC}_1 ( {\mathbf U}_L, {\mathbf U}_R )=:\left(\widetilde{F}^{(1)}_{1},\widetilde{F}^{(2)}_{1},\widetilde{F}^{(3)}_{1},\widetilde{F}^{(4)}_{1}\right)^\top$ and $\widetilde{\textbf{F}}^{EC}_2 ( {\mathbf U}_L, {\mathbf U}_R )=:\left(\widetilde{F}^{(1)}_{2},\widetilde{F}^{(2)}_{2},\widetilde{F}^{(3)}_{2},\widetilde{F}^{(4)}_{2}\right)^\top$ for the 2D RHD system satisfy 
\begin{equation} \label{EClinearsystem2D}
\begin{aligned}
    &[\![W_1]\!]\widetilde{F}^{(1)}_{1}+[\![W_2]\!]\widetilde{F}^{(2)}_{1}+[\![W_3]\!]\widetilde{F}^{(3)}_{1}+[\![W_4]\!]\widetilde{F}^{(4)}_{1}=[\![\psi_1]\!],\\
    &[\![W_1]\!]\widetilde{F}^{(1)}_{2}+[\![W_2]\!]\widetilde{F}^{(2)}_{2}+[\![W_3]\!]\widetilde{F}^{(3)}_{2}+[\![W_4]\!]\widetilde{F}^{(4)}_{2}=[\![\psi_2]\!].
\end{aligned}
\end{equation}
Substituting \eqref{thm6:temp} into \eqref{EClinearsystem2D}, we obtain 
\begin{align*}
\left({\mathcal E}[\![z_2]\!]+[\![\ln{z_1}]\!]\right)\widetilde{F}^{(1)}_{1}&+\left(\{\!\{z_3\}\!\}[\![z_2]\!]+[\![z_3]\!]\{\!\{z_2\}\!\}\right)\widetilde{F}^{(2)}_{1}+\left(\{\!\{z_4\}\!\}[\![z_2]\!]+[\![z_4]\!]\{\!\{z_2\}\!\}\right)\widetilde{F}^{(3)}_{1}\\
&-\left([\![z_2]\!]\{\!\{\gamma\}\!\}+\frac{\{\!\{z_2\}\!\}\left(\{\!\{z_3\}\!\}[\![z_3]\!]+\{\!\{z_4\}\!\}[\![z_4]\!]\right)}{\{\!\{\gamma\}\!\}}\right)\widetilde{F}^{(4)}_{1}=\{\!\{z_1\}\!\}[\![z_3]\!]+[\![z_1]\!]\{\!\{z_3\}\!\},
\end{align*}
and
\begin{align*}
\left({\mathcal E}[\![z_2]\!]+[\![\ln{z_1}]\!]\right)\widetilde{F}^{(1)}_{2}&+\left(\{\!\{z_3\}\!\}[\![z_2]\!]+[\![z_3]\!]\{\!\{z_2\}\!\}\right)\widetilde{F}^{(2)}_{2}+\left(\{\!\{z_4\}\!\}[\![z_2]\!]+[\![z_4]\!]\{\!\{z_2\}\!\}\right)\widetilde{F}^{(3)}_{2}\\
&-\left([\![z_2]\!]\{\!\{\gamma\}\!\}+\frac{\{\!\{z_2\}\!\}\left(\{\!\{z_3\}\!\}[\![z_3]\!]+\{\!\{z_4\}\!\}[\![z_4]\!]\right)}{\{\!\{\gamma\}\!\}}\right)\widetilde{F}^{(4)}_{2}=\{\!\{z_1\}\!\}[\![z_4]\!]+[\![z_1]\!]\{\!\{z_4\}\!\}.
\end{align*}
Collecting the terms containing $[\![z_1]\!],[\![z_2]\!],[\![z_3]\!]$, and $[\![z_4]\!]$, respectively, the above two equations can be reformulated as
\begin{align*}
&\left(\frac{\widetilde{F}^{(1)}_{1}}{\{\!\{z_1\}\!\}_{\ln}}-\{\!\{z_3\}\!\}\right)[\![z_1]\!]+\left(\mathcal{E}\widetilde{F}^{(1)}_{1}+\{\!\{z_3\}\!\}\widetilde{F}^{(2)}_{1}+\{\!\{z_4\}\!\}\widetilde{F}^{(3)}_{1}-\{\!\{\gamma\}\!\}\widetilde{F}^{(4)}_{1}\right)[\![z_2]\!]\\
&+\left(\{\!\{z_2\}\!\}\widetilde{F}^{(2)}_{1}-\frac{\{\!\{z_2\}\!\}\{\!\{z_3\}\!\}}{\{\!\{\gamma\}\!\}}\widetilde{F}^{(4)}_{1}-\{\!\{z_1\}\!\}\right)[\![z_3]\!]+\left(\{\!\{z_2\}\!\}\widetilde{F}^{(3)}_{1}-\frac{\{\!\{z_2\}\!\}\{\!\{z_4\}\!\}}{\{\!\{\gamma\}\!\}}\widetilde{F}^{(4)}_{1}\right)[\![z_4]\!]=0,
\end{align*}
and
\begin{align*}
&\left(\frac{\widetilde{F}^{(1)}_{2}}{\{\!\{z_1\}\!\}_{\ln}}-\{\!\{z_4\}\!\}\right)[\![z_1]\!]+\left(\mathcal{E}\widetilde{F}^{(1)}_{2}+\{\!\{z_3\}\!\}\widetilde{F}^{(2)}_{2}+\{\!\{z_4\}\!\}\widetilde{F}^{(3)}_{2}-\{\!\{\gamma\}\!\}\widetilde{F}^{(4)}_{2}\right)[\![z_2]\!]\\
&+\left(\{\!\{z_2\}\!\}\widetilde{F}^{(2)}_{2}-\frac{\{\!\{z_2\}\!\}\{\!\{z_3\}\!\}}{\{\!\{\gamma\}\!\}}\widetilde{F}^{(4)}_{2}\right)[\![z_3]\!]+\left(\{\!\{z_2\}\!\}\widetilde{F}^{(3)}_{2}-\frac{\{\!\{z_2\}\!\}\{\!\{z_4\}\!\}}{\{\!\{\gamma\}\!\}}\widetilde{F}^{(4)}_{2}-\{\!\{z_1\}\!\}\right)[\![z_4]\!]=0.
\end{align*}
Hence, the coefficients of $[\![z_1]\!],[\![z_2]\!],[\![z_3]\!]$ should all equal zero. Specifically, we have
\begin{equation*}
\left\{
\begin{aligned}
&\ \frac{\widetilde{F}^{(1)}_{1}}{\{\!\{z_1\}\!\}_{\ln}} = \{\!\{z_3\}\!\},\\
&\ \mathcal{E}\widetilde{F}^{(1)}_{1}+\{\!\{z_3\}\!\}\widetilde{F}^{(2)}_{1}+\{\!\{z_4\}\!\}\widetilde{F}^{(3)}_{1}-\{\!\{\gamma\}\!\}\widetilde{F}^{(4)}_{1}=0,\\
&\ \{\!\{z_2\}\!\}\widetilde{F}^{(2)}_{1}-\frac{\{\!\{z_2\}\!\}\{\!\{z_3\}\!\}}{\{\!\{\gamma\}\!\}}\widetilde{F}^{(4)}_{1}=\{\!\{z_1\}\!\},\\
&\ \{\!\{z_2\}\!\}\widetilde{F}^{(3)}_{1}-\frac{\{\!\{z_2\}\!\}\{\!\{z_4\}\!\}}{\{\!\{\gamma\}\!\}}\widetilde{F}^{(4)}_{1}=0,
\end{aligned}
\right.
\end{equation*}
and
\begin{equation*}
\left\{
\begin{aligned}
&\ \frac{\widetilde{F}^{(1)}_{2}}{\{\!\{z_1\}\!\}_{\ln}} = \{\!\{z_4\}\!\},\\
&\ \mathcal{E}\widetilde{F}^{(2)}_{1}+\{\!\{z_3\}\!\}\widetilde{F}^{(2)}_{2}+\{\!\{z_4\}\!\}\widetilde{F}^{(3)}_{2}-\{\!\{\gamma\}\!\}\widetilde{F}^{(4)}_{2}=0,\\
&\ \{\!\{z_2\}\!\}\widetilde{F}^{(2)}_{2}-\frac{\{\!\{z_2\}\!\}\{\!\{z_3\}\!\}}{\{\!\{\gamma\}\!\}}\widetilde{F}^{(4)}_{2}=0,\\
&\ \{\!\{z_2\}\!\}\widetilde{F}^{(3)}_{2}-\frac{\{\!\{z_2\}\!\}\{\!\{z_4\}\!\}}{\{\!\{\gamma\}\!\}}\widetilde{F}^{(4)}_{2}=\{\!\{z_1\}\!\},
\end{aligned}
\right.
\end{equation*}
Solving the above two linear systems for $\left(\widetilde{F}^{(1)}_{1},\widetilde{F}^{(2)}_{1},\widetilde{F}^{(3)}_{1},\widetilde{F}^{(4)}_{1}\right)^\top$ and $\left(\widetilde{F}^{(1)}_{2},\widetilde{F}^{(2)}_{2},\widetilde{F}^{(3)}_{2},\widetilde{F}^{(4)}_{2}\right)^\top$, respectively, we obtain
\begin{equation*}
\left\{
\begin{aligned}
&\ \widetilde{F}^{(1)}_{1}=\{\!\{z_1\}\!\}_{\ln}\{\!\{z_3\}\!\},\\
&\ \widetilde{F}^{(2)}_{1}=\widehat{\rho h}\{\!\{z_3\}\!\}^2+\frac{\{\!\{z_1\}\!\}}{\{\!\{z_2\}\!\}},\\
&\ \widetilde{F}^{(3)}_{1}=\widehat{\rho h}\{\!\{z_3\}\!\}\{\!\{z_4\}\!\},\\
&\ \widetilde{F}^{(4)}_{1}=\widehat{\rho h}\{\!\{\gamma\}\!\}\{\!\{z_3\}\!\},
\end{aligned}
\right.
\end{equation*}
and
\begin{equation*}
\left\{
\begin{aligned}
&\ \widetilde{F}^{(1)}_{2}=\{\!\{z_1\}\!\}_{\ln}\{\!\{z_4\}\!\},\\
&\ \widetilde{F}^{(2)}_{2}=\widehat{\rho h}\{\!\{z_3\}\!\}\{\!\{z_4\}\!\},\\
&\ \widetilde{F}^{(3)}_{2}=\widehat{\rho h}\{\!\{z_4\}\!\}^2+\frac{\{\!\{z_1\}\!\}}{\{\!\{z_2\}\!\}},\\
&\ \widetilde{F}^{(4)}_{2}=\widehat{\rho h}\{\!\{\gamma\}\!\}\{\!\{z_4\}\!\},
\end{aligned}
\right.
\end{equation*}
which lead to \eqref{2D2ptECFlux_x} and \eqref{2D2ptECFlux_y}, respectively. 
The proof is completed.

\end{proof}

\subsection{Dissipation matrix}
In this subsection, we present the explicit formulas of the dissipation matrices for the 2D RHD equations with  general Synge-type EOS \eqref{eq:gEOS}. In the 2D case, we need two dissipation matrices
	\begin{equation} \label{2DdissMat}
		\mathbf{D}_{1} = \mathbf{R}_{1}\left|\mathbf{\Lambda}_{1}\right|\mathbf{R}_{1}^\top,\qquad 
		\mathbf{D}_{2} = \mathbf{R}_{2}\left|\mathbf{\Lambda}_{2}\right|\mathbf{R}_{2}^\top
	\end{equation}
	corresponding to the $x_1$- and $x_2$-directions, respectively, where the matrices $\mathbf{R}_1$ and $\mathbf{R}_2$ are respectively formed by the suitably scaled right eigenvectors of the Jacobian matrices $\frac{\partial\mathbf{F}_1(\mathbf{U})}{\partial \mathbf{U}}$ and $\frac{\partial\mathbf{F}_2(\mathbf{U})}{\partial \mathbf{U}}$ of the 2D RHD system, and they satisfy
\begin{equation} \label{2DscaledMatExist}
    \frac{\partial\mathbf{F}_k}{\partial\mathbf{U}}=\mathbf{R}_k\mathbf{\Lambda}_k\mathbf{R}_k^{-1},\quad\quad \frac{\partial \mathbf{U}}{\partial \mathbf{W}}=\mathbf{R}_k\mathbf{R}_k^\top,\qquad k = 1,2.
\end{equation}
The formulas of $\mathbf{R}_1$ and $\mathbf{R}_2$ will be derived in Theorem \ref{thm:2DDissipationMat}. In \eqref{2DscaledMatExist}, the diagonal matrix $\mathbf{\Lambda}_1 = {\rm diag}\{\lambda_1^{(1)},\lambda_1^{(2)},\lambda_1^{(3)},\lambda_1^{(4)}\}$, where
\begin{align*}
    \lambda_1^{(1)} &= \frac{v_1(1-c_s^2)-c_s\sqrt{(1-v_1^2-v_2^2)(1-v_1^2-v_2^2c_s^2)}}{1-(v_1^2+v_2^2)c_s^2},\\
    \lambda_1^{(2)} &= \lambda_1^{(3)} = v_1,\\
    \lambda_1^{(4)} &= \frac{v_1(1-c_s^2)+c_s\sqrt{(1-v_1^2-v_2^2)(1-v_1^2-v_2^2c_s^2)}}{1-(v_1^2+v_2^2)c_s^2}
\end{align*}
are the four eigenvalues of the Jacobian matrix $\frac{\partial\mathbf{F}_1(\mathbf{U})}{\partial \mathbf{U}}$, and the diagonal matrix $\mathbf{\Lambda}_2 = {\rm diag}\{\lambda_2^{(1)},\lambda_2^{(2)},\lambda_2^{(3)},\lambda_2^{(4)}\}$, where
\begin{align*}
    \lambda_2^{(1)} &= \frac{v_2(1-c_s^2)-c_s\sqrt{(1-v_1^2-v_2^2)(1-v_2^2-v_1^2c_s^2)}}{1-(v_1^2+v_2^2)c_s^2},\\
    \lambda_2^{(2)} &= \lambda_2^{(3)} = v_2,\\
    \lambda_2^{(4)} &= \frac{v_2(1-c_s^2)+c_s\sqrt{(1-v_1^2-v_2^2)(1-v_2^2-v_1^2c_s^2)}}{1-(v_1^2+v_2^2)c_s^2}
\end{align*}
are the four eigenvalues of the Jacobian matrix $\frac{\partial\mathbf{F}_2(\mathbf{U})}{\partial \mathbf{U}}$.	
	\begin{theorem} \label{thm:2DDissipationMat}
		For the 2D RHD system with general Synge-type EOS \eqref{eq:gEOS}, the $x_1$-directional scaled eigenvector matrix $\mathbf{R}_1$ satisfying \eqref{2DscaledMatExist} is given by
		\begin{equation} \label{2DxDissMat}
			\mathbf{R}_1:= \tilde{\mathbf{R}}_1\sqrt{\tilde{\mathbf{D}}_1}:=
			\begin{pmatrix}
				1 & \frac{1}{\gamma} & \gamma v_2 & 1 \\
				h\gamma{\Delta_{\lambda_1^{(1)}}\lambda_1^{(1)}} & \Delta_{\theta} v_1 & 2h\gamma^2 v_1v_2 & h\gamma{\Delta_{\lambda_1^{(4)}}\lambda_1^{(4)}} \\
				h\gamma v_2 & \Delta_{\theta} v_2 & h(1+2\gamma^2v_2^2) & h\gamma v_2\\
				h\gamma{\Delta_{\lambda_1^{(1)}}} & \Delta_{\theta} & 2h\gamma^2v_2 & h\gamma{\Delta_{\lambda_1^{(4)}}}
			\end{pmatrix}
			\begin{pmatrix}
				\sqrt{d_1^{(1)}} & 0 & 0 & 0\\
				0 & \sqrt{d_1^{(2)}} & 0 & 0\\
				0 & 0 & \sqrt{d_1^{(3)}} & 0\\
				0 & 0 & 0 & \sqrt{d_1^{(4)}}
			\end{pmatrix},
		\end{equation}
		where $\Delta_{\lambda_1^{(1)}}=\frac{1-v_1^2}{1-v_1\lambda_1^{(1)}}$, $\Delta_{\lambda_1^{(4)}} = \frac{1-v_1^2}{1-v_1\lambda_1^{(4)}}$, $\Delta_{\theta} = h-\theta(1+e'(\theta))$, and the scaling coefficients $d_1^{(j)},\ j = 1,2,3,4,$ are defined as
		\begin{equation}\label{2DxdissipationMatrixCoeff}
			\left\{
			\begin{aligned}
				d_1^{(1)} &= \frac{M_1-N_1}{2},\\
				d_1^{(2)} &= \frac{\rho\gamma^3}{1+e'(\theta)},\\
				d_1^{(3)} &= \frac{\rho\theta}{h(1-v_1^2)\gamma},\\
				d_1^{(4)} &= \frac{M_1+N_1}{2},
			\end{aligned}
			\right.
		\end{equation}
		with $$M_1=\rho \gamma\left(\frac{e'(\theta)}{1+e'(\theta)}-\frac{\theta v_2^2}{h(1-v_1^2)}\right), \quad N_1 = \frac{\rho\theta v_1\sqrt{1-v_1^2-v_2^2c_s^2}}{hc_s(1-v_1^2)}.$$
		The scaled eigenvector matrix $\mathbf{R}_2$ satisfying \eqref{2DscaledMatExist} is given by
		\begin{equation} \label{2DyDissMat}
			\mathbf{R}_2:= \tilde{\mathbf{R}}_2\sqrt{\tilde{\mathbf{D}}_2}:=
			\begin{pmatrix}
				1 & \gamma v_1 & \frac{1}{\gamma} & 1 \\
				h\gamma v_1 & h(1+2\gamma^2v_1^2) & \Delta_{\theta} v_1 & h\gamma v_1\\
				h\gamma{\Delta_{\lambda_2^{(1)}}}\lambda_2^{(1)} & 2h\gamma^2v_1 v_2 & \Delta_{\theta} v_2 & h\gamma{\Delta_{\lambda_2^{(4)}}}\lambda_2^{(4)} \\
				h\gamma{\Delta_{\lambda_2^{(1)}}} & 2h\gamma^2v_1 & \Delta_{\theta} & h\gamma{\Delta_{\lambda_2^{(4)}}} 
			\end{pmatrix}
			\begin{pmatrix}
				\sqrt{d_2^{(1)}} & 0 & 0 & 0\\
				0 & \sqrt{d_2^{(2)}} & 0 & 0\\
				0 & 0 & \sqrt{d_2^{(3)}} & 0\\
				0 & 0 & 0 & \sqrt{d_2^{(4)}}
			\end{pmatrix},
		\end{equation}
		where $\Delta_{\lambda_2^{(1)}}=\frac{1-v_2^2}{1-v_2\lambda_2^{(1)}}$, $\Delta_{\lambda_2^{(4)}}=\frac{1-v_2^2}{1-v_2\lambda_2^{(4)}}$, $\Delta_{\theta} = h-\theta(1+e'(\theta))$, and the scaling coefficients $d_2^{(j)},\ j = 1,2,3,4,$ are defined as
		\begin{equation} \label{2DydissipationMatrixCoeff}
			\left\{
			\begin{aligned}
				d_2^{(1)} &= \frac{M_2-N_2}{2},\\
				d_2^{(2)} &= \frac{\rho\theta}{h(1-v_2^2)\gamma},\\
				d_2^{(3)} &= \frac{\rho\gamma^3}{1+e'(\theta)},\\
				d_2^{(4)} &= \frac{M_2+N_2}{2},
			\end{aligned}
			\right.
		\end{equation}
		with $$M_2=\rho \gamma\left(\frac{e'(\theta)}{1+e'(\theta)}-\frac{\theta v_1^2}{h(1-v_2^2)}\right), \quad N_2 = \frac{\rho\theta v_2\sqrt{1-v_2^2-v_1^2c_s^2}}{hc_s(1-v_2^2)}.$$
	\end{theorem}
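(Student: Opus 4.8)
The plan is to follow the two-step verification used in the proof of Theorem~\ref{thm:1DDissipationMat}, now adapted to the four-wave structure of the 2D system. The two conditions in \eqref{2DscaledMatExist} are of different natures: the first, $\frac{\partial\mathbf{F}_k}{\partial\mathbf{U}}=\mathbf{R}_k\mathbf{\Lambda}_k\mathbf{R}_k^{-1}$, is an eigendecomposition, while the second, $\frac{\partial\mathbf{U}}{\partial\mathbf{W}}=\mathbf{R}_k\mathbf{R}_k^\top$, is the scaling identity that pins down the diagonal matrices $\sqrt{\tilde{\mathbf{D}}_k}$. For the first condition, I would invoke the known eigenstructure of the 2D RHD Jacobians (cf.~\cite{zhao2013runge}) to confirm that the unscaled matrices $\tilde{\mathbf{R}}_1$ and $\tilde{\mathbf{R}}_2$ in \eqref{2DxDissMat} and \eqref{2DyDissMat} are right eigenvector matrices of $\frac{\partial\mathbf{F}_1}{\partial\mathbf{U}}$ and $\frac{\partial\mathbf{F}_2}{\partial\mathbf{U}}$, respectively, with the stated eigenvalues. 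Since $\sqrt{\tilde{\mathbf{D}}_k}$ is diagonal, right-multiplication by it merely rescales the columns without altering the eigenspaces, so $\mathbf{R}_k=\tilde{\mathbf{R}}_k\sqrt{\tilde{\mathbf{D}}_k}$ remains an eigenvector matrix and the first relation in \eqref{2DscaledMatExist} holds automatically. As a preliminary, I would also check that all scaling coefficients in \eqref{2DxdissipationMatrixCoeff} and \eqref{2DydissipationMatrixCoeff} are positive, so that the square roots are real; in particular $d_1^{(1)}=\tfrac{1}{2}(M_1-N_1)>0$ is to be deduced from the causality condition \eqref{assumption:1_theta}, the subluminal bound $v_1^2+v_2^2<1$, and \eqref{cs_theta}.

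The essential work is verifying the scaling identity $\frac{\partial\mathbf{U}}{\partial\mathbf{W}}=\mathbf{R}_k\mathbf{R}_k^\top$. As $\mathbf{U}$ cannot be written explicitly in terms of $\mathbf{W}$, I would compute the left-hand side through the chain rule $\frac{\partial\mathbf{U}}{\partial\mathbf{W}}=\frac{\partial\mathbf{U}}{\partial\mathbf{V}}\frac{\partial\mathbf{V}}{\partial\mathbf{W}}$, taking $\frac{\partial\mathbf{U}}{\partial\mathbf{V}}$ from \eqref{U_V} with $d=2$ and obtaining $\frac{\partial\mathbf{V}}{\partial\mathbf{W}}$ as the inverse of $\frac{\partial\mathbf{W}}{\partial\mathbf{V}}$, the latter being directly computable since $\mathbf{W}$ is explicit in $\mathbf{V}$. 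This yields an explicit symmetric $4\times4$ matrix for $\frac{\partial\mathbf{U}}{\partial\mathbf{W}}$. On the right-hand side I would expand $\mathbf{R}_1\mathbf{R}_1^\top=\sum_{j=1}^4 d_1^{(j)}\,\tilde{\mathbf r}_1^{(j)}(\tilde{\mathbf r}_1^{(j)})^\top$, where $\tilde{\mathbf r}_1^{(j)}$ denotes the $j$th column of $\tilde{\mathbf{R}}_1$, and match the two symmetric matrices entry by entry; by symmetry only the ten entries on and above the diagonal need to be checked.

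The main obstacle will be taming the two acoustic columns (the first and fourth), whose weights $d_1^{(1)}$ and $d_1^{(4)}$ carry the square-root term $N_1$ and whose entries involve the eigenvalues $\lambda_1^{(1)},\lambda_1^{(4)}$ and the factors $\Delta_{\lambda_1^{(1)}},\Delta_{\lambda_1^{(4)}}$. The key simplification is that $\lambda_1^{(1)}$ and $\lambda_1^{(4)}$ are the two roots of a common quadratic, so their symmetric functions
\begin{gather*}
\lambda_1^{(1)}+\lambda_1^{(4)}=\frac{2v_1(1-c_s^2)}{1-(v_1^2+v_2^2)c_s^2},\\
\lambda_1^{(1)}\lambda_1^{(4)}=\frac{v_1^2-c_s^2(1-v_2^2)}{1-(v_1^2+v_2^2)c_s^2}
\end{gather*}
are rational and simple. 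Crucially, the $N_1$ term cancels in the symmetric combination $d_1^{(1)}+d_1^{(4)}=M_1$ and survives linearly in $d_1^{(4)}-d_1^{(1)}=N_1$; together with the sound-speed identity \eqref{cs_theta}, these facts let the square roots disappear from every entry of $\mathbf{R}_1\mathbf{R}_1^\top$ and reduce each entry to the corresponding rational expression in $\frac{\partial\mathbf{U}}{\partial\mathbf{W}}$. Once the $x_1$-directional identity is established, the $x_2$-directional one for $\mathbf{R}_2$ follows by the symmetry $v_1\leftrightarrow v_2$ together with the corresponding permutation of the eigenvector columns, so no separate computation is required.
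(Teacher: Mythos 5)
Your proposal follows essentially the same route as the paper's proof: invoke the known eigenstructure so that the first relation in \eqref{2DscaledMatExist} holds automatically after diagonal rescaling, compute $\frac{\partial\mathbf{U}}{\partial\mathbf{W}}$ by the chain rule through the primitive variables, and verify $\mathbf{R}_1\mathbf{R}_1^\top$ entry by entry by exploiting the fact that the acoustic weights $d_1^{(1)},d_1^{(4)}=\tfrac{1}{2}(M_1\mp N_1)$ pair the square root in $N_1$ against the square root in the antisymmetric combinations of the acoustic-column entries (the paper records exactly these as the identities \eqref{t1t4}--\eqref{t1tpmt4t} for $t_1\pm t_4$, $\tilde t_1\pm\tilde t_4$, etc.), with $\mathbf{R}_2$ then obtained from the $v_1\leftrightarrow v_2$ permutation symmetry rather than by a separate computation. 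The only substantive addition is your preliminary positivity check on the scaling coefficients, which the paper leaves implicit.
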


	\begin{proof}
		Note that $\tilde{\mathbf{R}}_1$ defined in \eqref{2DxDissMat} is a right eigenvector matrix of the Jacobian matrix $\frac{\partial {\mathbf F}_1({\mathbf U})}{\partial {\mathbf U}}$; see   \cite{zhao2013runge}. Since $\sqrt{\mathbf{\tilde{D}}_1}$ is a diagonal matrix, we know that $\mathbf{R}_1=\mathbf{\tilde{R}}_1\sqrt{\mathbf{\tilde{D}}_1}$ is also a right eigenvector matrix of  $\frac{\partial {\mathbf F}_1({\mathbf U})}{\partial {\mathbf U}}$. Hence, $\mathbf{R}_1$ satisfies $ \frac{\partial\mathbf{F}_1}{\partial\mathbf{U}}=\mathbf{R}_1\mathbf{\Lambda}_1\mathbf{R}_1^{-1}$. We only need to verify that $\mathbf{R}_1$ satisfies $\frac{\partial \mathbf{U}}{\partial \mathbf{W}}=\mathbf{R}_1\mathbf{R}_1^\top$. Since $\mathbf{U}$ cannot be explicitly formulated by $\mathbf{W}$, we derive $\frac{\partial\mathbf{U}}{\partial\mathbf{W}}$ by the chain rule \eqref{U_W_chainRule}. As $\mathbf{W}$ can be explicitly formulated by $\mathbf{V}$, a direct calculation leads to 
		\begin{equation*} 
			\frac{\partial\mathbf{W}}{\partial\mathbf{V}} = \begin{pmatrix}
				\frac{h}{\rho\theta} & 0 & 0 & \frac{\theta-h}{\rho\theta^2}\\
				\frac{\gamma v_1}{\rho\theta} & \frac{\gamma^3(1-v_2^2)}{\theta} & \frac{\gamma^3v_1v_2}{\theta} & -\frac{\gamma v_1}{\rho\theta^2}\\
				\frac{\gamma v_2}{\rho\theta} & \frac{\gamma^3v_1v_2}{\theta} & \frac{\gamma^3(1-v_1^2)}{\theta} & -\frac{\gamma v_2}{\rho\theta^2}\\
				-\frac{\gamma}{\rho\theta} & -\frac{\gamma^3 v_1}{\theta} & -\frac{\gamma^3 v_2}{\theta} & \frac{\gamma}{\rho\theta^2}
			\end{pmatrix},
		\end{equation*}
		whose inverse matrix is given by 
		\begin{equation} \label{V_W:2D}
			\frac{\partial\mathbf{V}}{\partial\mathbf{W}} = \begin{pmatrix}
				\rho & \rho\gamma v_1(h-\theta) & \rho\gamma v_2(h-\theta) & \rho\gamma(h-\theta)\\
				0 & \frac{\theta}{\gamma} & 0 & \frac{\theta v_1}{\gamma}\\
				0 & 0 & \frac{\theta}{\gamma} & \frac{\theta v_2}{\gamma}\\
				\rho\theta & \rho h\gamma\theta v_1 & \rho h\gamma\theta v_2 & \rho h \gamma\theta
			\end{pmatrix}.
		\end{equation}
		Combining \eqref{V_W:2D} with \eqref{U_V} for the case $d=2$, we can compute $\frac{\partial \mathbf{U}}{\partial\mathbf{W}}$ by \eqref{U_W_chainRule} as
		
\begin{equation} \label{U_W:2D}
	\frac{\partial\mathbf{U}}{\partial\mathbf{W}}=\begin{pmatrix}
		\rho\gamma & \rho h\gamma^2 v_1 & \rho h\gamma^2 v_2 & \rho(-\theta+h\gamma^2) \\
		\rho h \gamma^2 v_1 & \rho\gamma^3\sigma_1 & \rho\gamma^3v_1v_2\sigma_2 & \rho\gamma^3 v_1\sigma_4\\
		\rho h \gamma^2 v_2 & \rho\gamma^3v_1v_2\sigma_2 & \rho\gamma^3\sigma_3 & \rho\gamma^3 v_2\sigma_4\\
		\rho(-\theta+h\gamma^2) & \rho\gamma^3 v_1\sigma_4 & \rho\gamma^3 v_2\sigma_4 & \rho\gamma^3\left(\sigma_4-2\theta h\left(1-(v_1^2+v_2^2)\right)\right)
	\end{pmatrix}
\end{equation}
with 
\begin{equation*}
	\begin{aligned}
		\sigma_1 &= \theta^2v_1^2(1+e'(\theta))+\theta h+h^2v_1^2-\theta hv_2^2,\qquad \sigma_2 = \theta^2(1+e'(\theta))+\theta h+h^2,\\
		\sigma_3 &= \theta^2v_2^2(1+e'(\theta))+\theta h+h^2v_2^2-\theta hv_1^2,\qquad \sigma_4 = \theta^2(1+e'(\theta))+h^2+\theta h(v_1^2+v_2^2).
	\end{aligned}
\end{equation*}
Let ${\bf r}_i$ be the $ith$ row of the scaled eigenvector matrix $\mathbf{R}_1$. In the following, we would like to verify the relation $\frac{\partial\mathbf{U}}{\partial\mathbf{W}}=\mathbf{R}_1\mathbf{R}_1^\top$ by calculating ${\bf r}_i {\bf r}_j^\top,\ i,j = 1,2,3,4$, and then comparing the results with \eqref{U_W:2D}. 
	Since the Lorentz factor couples the velocities $v_1$ and $v_2$, the structures of the eigenvectors and eigenvalues are much more complicated than the 1D case, making the verification more difficult. 
	To simplify our calculation, we first observe the following identities 
\begin{align*}
    t_1&:=\frac{\lambda_1^{(1)}}{1-v_1\lambda_1^{(1)}}
    =\frac{(1-c_s^2)v_1-c_s\sqrt{(1-v_1^2-v_2^2)(1-v_1^2-v_2^2c_s^2)}}{(1-v_1^2-v_2^2c_s^2)+c_sv_1\sqrt{(1-v_1^2-v_2^2)(1-v_1^2-v_2^2c_s^2)}},\\
    t_4&:=\frac{\lambda_1^{(4)}}{1-v_1\lambda_1^{(4)}}
    =\frac{(1-c_s^2)v_1+c_s\sqrt{(1-v_1^2-v_2^2)(1-v_1^2-v_2^2c_s^2)}}{(1-v_1^2-v_2^2c_s^2)-c_sv_1\sqrt{(1-v_1^2-v_2^2)(1-v_1^2-v_2^2c_s^2)}},\\
    \tilde{t}_1&:=\frac{1}{1-v_1\lambda_1^{(1)}}=\left(1-\frac{v_1\left((1-c_s^2)v_1-c_s\sqrt{(1-v_1^2-v_2^2)(1-v_1^2-v_2^2c_s^2)}\right)}{1-(v_1^2+v_2^2)c_s^2}\right)^{-1},\\
    \tilde{t}_4&:=\frac{1}{1-v_1\lambda_1^{(1)}}=\left(1-\frac{v_1\left((1-c_s^2)v_1+c_s\sqrt{(1-v_1^2-v_2^2)(1-v_1^2-v_2^2c_s^2)}\right)}{1-(v_1^2+v_2^2)c_s^2}\right)^{-1},
\end{align*}
from which we can further deduce that
\begin{align} 
    \label{t1t4}
    t_1+t_4 &= \frac{2v_1}{1-v_1^2},\qquad
    t_4-t_1 = \frac{2c_s\sqrt{1-v_1^2-v_2^2}}{(1-v_1^2)\sqrt{1-v_1^2-v_2^2c_s^2}},
    \\ \label{t1t4sq}
    t_1^2+t_4^2 &= \frac{2}{(1-v_1^2)^2}\left(v_1^2+\frac{c_s^2(1-v_1^2-v_2^2)}{(1-v_1^2-v_2^2c_s^2}\right),\qquad
    t_4^2-t_1^2 = \frac{4c_sv_1\sqrt{1-v_1^2-v_2^2}}{(1-v_1^2)^2\sqrt{1-v_1^2-v_2^2c_s^2}},
    \\ \label{tildet1t4}
    \tilde{t}_1+\tilde{t}_4 &= \frac{2}{1-v_1^2},\qquad
    \tilde{t}_4-\tilde{t}_1 = \frac{2c_sv_1\sqrt{1-v_1^2-v_2^2}}{(1-v_1^2)\sqrt{1-v_1^2-v_2^2c_s^2}},
    \\ \label{tildet1t4sq}
    \tilde{t}_1^2+\tilde{t}_4^2 &= \frac{2}{(1-v_1^2)^2}\left(1+\frac{c_s^2v_1^2(1-v_1^2-v_2^2)}{1-v_1^2-v_2^2c_s^2}\right),\qquad
    \tilde{t}_4^2-\tilde{t}_1^2 = \frac{4c_sv_1\sqrt{1-v_1^2-v_2^2}}{(1-v_1^2)^2\sqrt{1-v_1^2-v_2^2c_s^2}},
    \\ \label{t1tpmt4t}
    t_1\tilde{t}_1+t_4\tilde{t}_4 &= \frac{2v_1}{(1-v_1^2)^2}\left(1+\frac{c_s^2(1-v_1^2-v_2^2)}{1-v_1^2-v_2^2c_s^2}\right),\qquad
    t_4\tilde{t}_4-t_1\tilde{t}_1 = \frac{2c_s(1+v_1^2)\sqrt{1-v_1^2-v_2^2}}{(1-v_1^2)^2\sqrt{1-v_1^2-v_2^2c_s^2}}.
\end{align}
Using \eqref{cs_theta} and \eqref{t1t4}--\eqref{t1tpmt4t}, we calculate 
		\begin{align*}
		    {\bf r}_1 {\bf r}_1^\top &= d_1^{(1)}+\frac{d_1^{(2)}}{\gamma^2}+\gamma^2v_2^2d_1^{(3)}+d_1^{(4)}=M_1+\frac{\rho \gamma}{1+e'(\theta)}+\frac{\rho\theta\gamma v_2^2}{h(1-v_1^2)}\\
		    &=\frac{\rho e'(\theta) \gamma}{1+e'(\theta)}+\frac{\rho \gamma}{1+e'(\theta)}=\rho\gamma=\left(\frac{\partial\mathbf{U}}{\partial\mathbf{W}}\right)_{1,1},\\
		    {\bf r}_1 {\bf r}_2^\top &= h\gamma(1-v_1^2) t_1d_1^{(1)}+\frac{\Delta_{\theta} v_1}{\gamma}d_1^{(2)}+2h\gamma^3v_1v_2^2d_1^{(3)}+h\gamma(1-v_1^2) t_4d_1^{(4)}\\
		    &=\frac{M_1}{2}h\gamma (1-v_1^2)(t_1+t_4)+\frac{N_1}{2}h\gamma (1-v_1^2)(t_4-t_1)+\frac{\rho\gamma^2\Delta_{\theta}v_1}{1+e'(\theta)}+\frac{2\rho\theta\gamma^2v_1v_2^2}{1-v_1^2}\\
		    &\overset{\eqref{t1t4}}{=}\rho h\gamma^2v_1\left(\frac{e'(\theta)}{1+e'(\theta)}-\frac{\theta v_2^2}{h(1-v_1^2)}\right)+\frac{\rho\theta v_1}{1-v_1^2}+\frac{\rho\gamma^2\Delta_{\theta}v_1}{1+e'(\theta)}+\frac{2\rho\theta\gamma^2v_1v_2^2}{1-v_1^2}\\
		    &=\rho\gamma^2v_1\left(h-\theta+\frac{\theta v_2^2}{1-v_1^2}\right)+\frac{\rho\theta v_1}{1-v_1^2}
		    =\rho h\gamma^2v_1 = \left(\frac{\partial\mathbf{U}}{\partial\mathbf{W}}\right)_{1,2},\\
		   {\bf r}_1 {\bf r}_3^\top &= h\gamma v_2d_1^{(1)}+\frac{\Delta_{\theta}v_2}{\gamma}d_1^{(2)}+h\gamma v_2(1+\gamma^2v_2^2)d_1^{(3)}+h\gamma v_2d_1^{(4)}\\
		    &=\rho h\gamma^2 v_2\left(\frac{e'(\theta)}{1+e'(\theta)}-\frac{\theta v_2^2}{h(1-v_1^2)}\right)+\frac{\rho\gamma^2\Delta_{\theta}v_2}{1+e'(\theta)}+\frac{\rho\theta v_2}{1-v_1^2}(1+2\gamma^2v_2^2)\\
		    &=\rho\gamma^2v_2(h-\theta)+\frac{\rho\theta v_2}{1-v_1^2}(1+\gamma^2v_2^2)
		    =\rho h\gamma^2v_2=\left(\frac{\partial\mathbf{U}}{\partial\mathbf{W}}\right)_{1,3},\\
		    {\bf r}_1 {\bf r}_4^\top &= h\gamma(1-v_1^2) \tilde{t}_1d_1^{(1)}+\frac{\Delta_{\theta}}{\gamma}d_1^{(2)}+2h\gamma^3v_2^2d_1^{(3)}+h\gamma(1-v_1^2) \tilde{t}_4d_1^{(4)}\\
		    &=\frac{M_1}{2}h\gamma (1-v_1^2)(\tilde{t}_1+\tilde{t}_4)+\frac{N_1}{2}h\gamma (1-v_1^2)(\tilde{t}_4-\tilde{t}_1)+\frac{\rho\gamma^2\Delta_{\theta}}{1+e'(\theta)}+\frac{2\rho\theta\gamma^2v_2^2}{1-v_1^2}\\
		    &\overset{\eqref{tildet1t4}}{=}\rho h\gamma^2\left(\frac{e'(\theta)}{1+e'(\theta)}-\frac{\theta v_2^2}{h(1-v_1^2)}\right)+\frac{\rho\theta v_1^2}{1-v_1^2}+\frac{\rho\gamma^2\Delta_{\theta}}{1+e'(\theta)}+\frac{2\rho\theta\gamma^2v_2^2}{1-v_1^2}\\
		    &=\rho\gamma^2(h-\theta)+\rho\theta\gamma^2(v_1^2+v_2^2)
		    =\rho h\gamma^2-\rho\theta = \left(\frac{\partial\mathbf{U}}{\partial\mathbf{W}}\right)_{1,4},\\
		    {\bf r}_2 {\bf r}_2^\top &= h^2\gamma^2(1-v_1^2)^2t_1^2d_1^{(1)}+\Delta_{\theta}^2v_1^2d_1^{(2)}+4h^2\gamma^4v_1^2v_2^2d_1^{(3)}+h^2\gamma^2(1-v_1^2)^2t_4^2d_1^{(4)}\\
		    &=h^2\gamma^2(1-v_1^2)^2\left(\frac{M_1}{2}(t_1^2+t_4^2)+\frac{N_1}{2}(t_4^2-t_1^2)\right)+\frac{\rho\gamma^3\Delta_{\theta}^2v_1^2}{1+e'(\theta)}+\frac{4\rho\theta h\gamma^3v_1^2v_2^2}{1-v_1^2}\\
		    &\overset{\eqref{t1t4sq}}{=}h^2\gamma^2\left(\rho\gamma\left(\frac{e'(\theta)}{1+e'(\theta)}-\frac{\theta v_2^2}{h(1-v_1^2)}\right)\left(v_1^2+\frac{c_s^2}{\gamma^2(1-v_1^2-v_2^2c_s^2}\right)+\frac{2\rho\theta v_1^2}{h\gamma(1-v_1^2)}\right)\\
		    &\quad +\frac{\rho\gamma^3\Delta_{\theta}^2v_1^2}{1+e'(\theta)}+\frac{4\rho\theta h\gamma^3v_1^2v_2^2}{1-v_1^2}\\
		    &=\rho h^2\gamma^3\left(v_1^2+\frac{c_s^2}{\gamma^2(1-v_1^2-v_2^2c_s^2}\right)\frac{e'(\theta)}{1+e'(\theta)}+\frac{\rho\gamma^3\Delta_{\theta}^2v_1^2}{1+e'(\theta)}\\
		    &\quad -\rho h^2\gamma^3\left(v_1^2+\frac{c_s^2}{\gamma^2(1-v_1^2-v_2^2c_s^2}\right)\frac{\theta v_2^2}{h(1-v_1^2)}+\frac{2\rho h\gamma\theta v_1^2}{1-v_1^2}+\frac{4\rho\theta h\gamma^3v_1^2v_2^2}{1-v_1^2}\\
		    &\overset{\eqref{cs_theta}}{=}\rho\gamma^3\left(h^2v_1^2+\theta^2(1+e'(\theta)v_1^2+\frac{\theta hv_1^2v_2^2}{1-v_1^2}+\frac{\theta h}{\gamma^2(1-v_1^2)}\right)\\
		    &=\rho\gamma^3\left(h^2v_1^2+\theta^2(1+e'(\theta)v_1^2+\theta h(1-v_2^2)\right) = \left(\frac{\partial\mathbf{U}}{\partial\mathbf{W}}\right)_{2,2},\\
		    {\bf r}_2 {\bf r}_3^\top &= h^2\gamma^2v_2(1-v_1^2)t_1d_1^{(1)}+\Delta_{\theta}^2v_1v_2d_1^{(2)}+2h^2\gamma^2v_1v_2(1+2\gamma^2v_2^2)d_1^{(3)}+h^2\gamma^2v_2(1-v_1^2)t_4d_1^{(4)}\\
		    &=h^2\gamma^2v_2(1-v_1^2)\left(\frac{M_1}{2}(t_1+t_4)+\frac{N_1}{2}(t_4-t_1)\right)+\frac{\rho\gamma^3\Delta_{\theta}^2v_1v_2}{1+e'(\theta)}+\frac{2\rho\theta h\gamma v_1v_2(1+2\gamma^2v_2^2)}{1-v_1^2}\\
		    &\overset{\eqref{t1t4}}{=}\rho h^2\gamma^3v_1v_2\left(\frac{e'(\theta)}{1+e'(\theta)}-\frac{\theta v_2^2}{h(1-v_1^2)}\right)+\frac{\rho\theta h\gamma v_1v_2}{1-v_1^2}+\frac{\rho\gamma^3\Delta_{\theta}^2v_1v_2}{1+e'(\theta)}+\frac{2\rho\theta h\gamma v_1v_2(1+2\gamma^2v_2^2)}{1-v_1^2}\\
		    &=\rho \gamma^3v_1v_2\left(h^2-2\theta h+\theta^2(1+e'(\theta))\right)+\frac{3\rho\theta h\gamma v_1v_2}{1-v_1^2}(1+\gamma^2v_2^2)\\
		    &=\rho\gamma^3v_1v_2\left(h^2+\theta h+\theta^2(1+e'(\theta))\right)=\left(\frac{\partial\mathbf{U}}{\partial\mathbf{W}}\right)_{2,3},\\
		    {\bf r}_2 {\bf r}_4^\top &= h^2\gamma^2(1-v_1^2)^2t_1\tilde{t}_1d_1^{(1)}+\Delta_{\theta}^2v_1d_1^{(2)}+4h^2\gamma^4v_1v_2^2d_1^{(3)}+h^2\gamma^2(1-v_1^2)^2t_4\tilde{t}_4\\
		    &=h^2\gamma^2(1-v_1^2)^2\left(\frac{M_1}{2}(t_1\tilde{t}_1+t_4\tilde{t}_4)+\frac{N_1}{2}(t_4\tilde{t}_4-t_1\tilde{t}_1\right)+\frac{\rho\gamma^3\Delta_{\theta}^2v_1}{1+e'(\theta)}+\frac{4\rho\theta h\gamma^3v_1v_2^2}{1-v_1^2}\\
		    &\overset{\eqref{t1tpmt4t}}{=}h^2\gamma^2v_1\left(\rho\gamma\left(\frac{e'(\theta)}{1+e'(\theta)}-\frac{\theta v_2^2}{h(1-v_1^2)}\right)\left(1+\frac{c_s^2(1-v_1^2-v_2^2)}{1-v_1^2-v_2^2c_s^2}\right)+\frac{\rho\theta(1+v_1^2)}{h\gamma(1-v_1^2)}\right)\\
		    &\quad +\frac{\rho\gamma^3\Delta_{\theta}^2v_1}{1+e'(\theta)}+\frac{4\rho\theta h\gamma^3v_1v_2^2}{1-v_1^2}\\
		    &=\rho h^2\gamma^3v_1\left(1+\frac{c_s^2(1-v_1^2-v_2^2)}{1-v_1^2-v_2^2c_s^2}\right)\frac{e'(\theta)}{1+e'(\theta)}+\frac{\rho\gamma^3\Delta_{\theta}^2v_1}{1+e'(\theta)}\\
		    &\quad -\frac{\rho h\gamma^3\theta v_1v_2^2}{1-v_1^2}\left(1+\frac{c_s^2(1-v_1^2-v_2^2)}{1-v_1^2-v_2^2c_s^2}\right)+\frac{\rho h\gamma\theta v_1(1+v_1^2)}{1-v_1^2}+\frac{4\rho\theta h\gamma^3v_1v_2^2}{1-v_1^2}\\
		    &\overset{\eqref{cs_theta}}{=}\rho\gamma^3v_1\left(h^2-2\theta h+\theta^2(1+e'(\theta))\right)+\frac{\rho h\gamma\theta v_1}{1-v_1^2}(2+v_1^2+3\gamma^2v_2^2)\\
		    &=\rho\gamma^3v_1\left(h^2+\theta h(v_1^2+v_2^2)+\theta^2(1+e'(\theta))\right)=\left(\frac{\partial\mathbf{U}}{\partial\mathbf{W}}\right)_{2,4},\\
		    {\bf r}_3 {\bf r}_3^\top &= h^2\gamma^2v_2^2d_1^{(1)}+\Delta_{\theta}^2v_2^2d_1^{(2)}+h^2(1+2\gamma^2v_2^2)^2d_1^{(3)}+h^2\gamma^2v_2^2d_1^{(4)}\\
		    &=\rho\gamma^3v_2^2\left(\frac{h^2e'(\theta)}{1+e'(\theta)}+\frac{\Delta_{\theta}^2}{1+e'(\theta)}\right)+\frac{\rho\theta h}{\gamma(1-v_1^2)}\left(-\gamma^4v_2^4+(1+2\gamma^2v_2^2)^2\right)\\
		    &=\rho\gamma^3(h^2v_2^2+\theta h(1-v_1^2)+\theta^2(1+e'(\theta))v_2^2) = \left(\frac{\partial\mathbf{U}}{\partial\mathbf{W}}\right)_{3,3},\\
		    {\bf r}_3 {\bf r}_4^\top &=  h^2\gamma^2v_2(1-v_1^2)\tilde{t}_1d_1^{(1)}+\Delta_{\theta}^2v_2d_1^{(2)}+2h^2\gamma^2v_2(1+2\gamma^2v_2^2)d_1^{(3)}+h^2\gamma^2v_2(1-v_1^2)\tilde{t}_4d_1^{(4)}\\
		    &=h^2\gamma^2v_2(1-v_1^2)\left(\frac{M_1}{2}(\tilde{t}_1+\tilde{t}_4)+\frac{N_1}{2}(\tilde{t}_4-\tilde{t}_1)\right)+\frac{\rho\gamma^3\Delta_{\theta}^2v_2}{1+e'(\theta)}+\frac{2\rho\theta h\gamma v_2(1+2\gamma^2v_2^2)}{1-v_1^2}\\
		    &\overset{\eqref{tildet1t4}}{=}h^2\gamma^2v_2\left(\rho\gamma\left(\frac{e'(\theta)}{1+e'(\theta)}-\frac{\theta v_2^2}{h(1-v_1^2)}\right)+\frac{\rho\theta v_1^2}{h\gamma(1-v_1^2)}\right)+\frac{\rho\gamma^3\Delta_{\theta}^2v_2}{1+e'(\theta)}+\frac{2\rho\theta h\gamma v_2(1+2\gamma^2v_2^2)}{1-v_1^2}\\
		    &=\rho\gamma^3v_2\left(\frac{h^2e'(\theta)}{1+e'(\theta)}+\frac{\Delta_{\theta}^2}{1+e'(\theta)}\right)+\frac{\rho\theta h\gamma v_2}{1-v_1^2}\left(2(1+2\gamma^2v_2^2)-\gamma^2v_2^2+v_1^2\right)\\
		    &=\rho\gamma^3v_2\left(h^2+\theta h(v_1^2+v_2^2)+\theta^2(1+e'(\theta))\right)=\left(\frac{\partial\mathbf{U}}{\partial\mathbf{W}}\right)_{3,4},\\
		    {\bf r}_4 {\bf r}_4^\top &= h^2\gamma^2(1-v_1^2)^2\tilde{t}_1^2d_1^{(1)}+\Delta_{\theta}^2d_1^{(2)}+4h^2\gamma^4v_2^2d_1^{(3)}+h^2\gamma^2(1-v_1^2)^2\tilde{t}_4^2d_1^{(4)}\\
		    &=h^2\gamma^2(1-v_1^2)^2\left(\frac{M_1}{2}(\tilde{t}_1^2+\tilde{t}_4^2)+\frac{N_1}{2}(\tilde{t}_4^2-\tilde{t}_1^2)\right)+\frac{\rho\gamma^3\Delta_{\theta}^2}{1+e'(\theta)}+\frac{4\rho\theta h\gamma^3v_2^2}{1-v_1^2}\\
		    &\overset{\eqref{tildet1t4sq}}{=}h^2\gamma^2\left(\rho\gamma\left(\frac{e'(\theta)}{1+e'(\theta)}-\frac{\theta v_2^2}{h(1-v_1^2)}\right)\left(1+\frac{c_s^2v_1^2(1-v_1^2-v_2^2)}{1-v_1^2-v_2^2c_s^2}\right)+\frac{2\rho\theta v_1^2}{h\gamma(1-v_1^2)}\right)\\
		    &\quad +\frac{\rho\gamma^3\Delta_{\theta}^2}{1+e'(\theta)}+\frac{4\rho\theta h\gamma^3v_2^2}{1-v_1^2}\\
		    &=\rho\gamma^3\left(\frac{h^2e'(\theta)}{1+e'(\theta)}+\frac{\Delta_{\theta}^2}{1+e'(\theta)}\right)+\frac{\rho\theta h\gamma}{1-v_1^2}\left(2v_1^2+3\gamma^2v_2^2\right)+\frac{\rho\gamma v_1^2}{1-v_1^2-v_2^2c_s^2}\left(\frac{h^2e'(\theta)c_s^2}{1+e'(\theta)}-\frac{\theta hv_2^2c_s^2}{1-v_1^2}\right)\\
		    &\overset{\eqref{cs_theta}}{=}\rho\gamma^3(h^2-2\theta h+\theta^2(1+e'(\theta))+\frac{3\rho\gamma h \theta}{1-v_1^2}(v_1^2+\gamma^2v_2^2)\\
		    &=\rho\gamma^3(h^2-2\theta h+\theta^2(1+e'(\theta))+3\theta h(v_1^2+v_2^2))=\left(\frac{\partial\mathbf{U}}{\partial\mathbf{W}}\right)_{4,4}.
		\end{align*}
		Noting that both matrices $\mathbf{R}_1\mathbf{R}_1^\top$ and $\frac{\partial\mathbf{U}}{\partial\mathbf{W}}$ in \eqref{U_W:2D} are symmetric, we have
		\begin{align*}
		    {\bf r}_2 {\bf r}_1^\top &= \left(\frac{\partial\mathbf{U}}{\partial\mathbf{W}}\right)_{2,1},\quad
		    {\bf r}_3 {\bf r}_1^\top = \left(\frac{\partial\mathbf{U}}{\partial\mathbf{W}}\right)_{3,1},\quad
		    {\bf r}_3 {\bf r}_2^\top = \left(\frac{\partial\mathbf{U}}{\partial\mathbf{W}}\right)_{3,2},\\
		    {\bf r}_4 {\bf r}_1^\top &= \left(\frac{\partial\mathbf{U}}{\partial\mathbf{W}}\right)_{4,1},\quad
		    {\bf r}_4 {\bf r}_2^\top = \left(\frac{\partial\mathbf{U}}{\partial\mathbf{W}}\right)_{4,2},\quad
		    {\bf r}_4 {\bf r}_3^\top = \left(\frac{\partial\mathbf{U}}{\partial\mathbf{W}}\right)_{4,3}.
		\end{align*}
		Hence, we have verified that $\frac{\partial \mathbf{U}}{\partial \mathbf{W}}=\mathbf{R}_1\mathbf{R}_1^\top$.  Next, in order to verify $\frac{\partial \mathbf{U}}{\partial \mathbf{W}}=\mathbf{R}_2\mathbf{R}_2^\top$, we consider the rotation matrix  
		$$T=
		\begin{pmatrix}
		    1 & 0 & 0 & 0\\
		    0 & 0 & 1 & 0\\
		    0 & 1 & 0 & 0\\
		    0 & 0 & 0 & 1
		\end{pmatrix}.$$
		We observe that 
		\begin{align*}
		    \tilde{\mathbf{R}}_2(\mathbf{V}) &= T\tilde{\mathbf{R}}_1(T\mathbf{V})T,\qquad
		    \tilde{\mathbf{D}}_2(\mathbf{V}) = T\tilde{\mathbf{D}}_1(T\mathbf{V})T,\qquad
		    \mathbf{R}_1(T\mathbf{V})\mathbf{R}_1(T\mathbf{V})^\top=T\frac{\partial\mathbf{U}}{\partial\mathbf{W}}T,\\
		    \mathbf{\Lambda}_2(\mathbf{V})&=\mathbf{\Lambda}_1(T\mathbf{V}), \qquad
		    \mathbf{F}_2(\mathbf{V})=T\mathbf{F}_1(\mathbf{T\mathbf{V}}), \qquad 
		    \frac{\partial\mathbf{F}_2(\mathbf{U})}{\partial\mathbf{U}}=T\frac{\partial\mathbf{F}_1(T\mathbf{U})}{\partial\mathbf{U}}=T\frac{\partial\mathbf{F}_1(T\mathbf{U})}{\partial T\mathbf{U}}T.
		\end{align*}
		Then we have
		\begin{align*}
		    \mathbf{R}_2(\mathbf{V})\mathbf{\Lambda}_2(\mathbf{V})\mathbf{R}_2^{-1}(\mathbf{V})&=\tilde{\mathbf{R}}_2(\mathbf{V})\sqrt{\tilde{\mathbf{D}}_2(\mathbf{V})}\mathbf{\Lambda}_2(\mathbf{V})\left(\sqrt{\tilde{\mathbf{D}}_2(\mathbf{V})}\right)^{-1}\tilde{\mathbf{R}}_2^{-1}(\mathbf{V})\\
		    &=T\tilde{\mathbf{R}}_1(T\mathbf{V})T\mathbf{\Lambda}_1(T\mathbf{V})T\tilde{\mathbf{R}}_1^{-1}(T\mathbf{V})T\\
		    &=T\tilde{\mathbf{R}}_1(T\mathbf{V})\sqrt{\tilde{\mathbf{D}}_1(T\mathbf{V})}\mathbf{\Lambda}_1(T\mathbf{V})\left(\sqrt{\tilde{\mathbf{D}}_1(T\mathbf{V})}\right)^{-1}\tilde{\mathbf{R}}_1^{-1}(T\mathbf{V})T\\
		    &=T\mathbf{R}_1(T\mathbf{V})\mathbf{\Lambda}_1(T\mathbf{V})\mathbf{R}_1^{-1}(T\mathbf{V})T
		    =T\frac{\partial\mathbf{F}_1(T\mathbf{U})}{\partial T\mathbf{U}}T=\frac{\partial\mathbf{F}_2(\mathbf{U})}{\partial\mathbf{U}},
		\end{align*}
		and
		\begin{align*}
		    \mathbf{R}_2(\mathbf{V})\mathbf{R}_2(\mathbf{V})^\top &= 
		    \tilde{\mathbf{R}}_2(\mathbf{V})\tilde{\mathbf{D}}_2(\mathbf{V})\tilde{\mathbf{R}}_2(\mathbf{V})^\top
		    =T\tilde{\mathbf{R}}_1(T\mathbf{V})\tilde{\mathbf{D}}_1(T\mathbf{V})\tilde{\mathbf{R}}_1(T\mathbf{V})^\top T\\
		    &=T\mathbf{R}_1(T\mathbf{V})\mathbf{R}_1(T\mathbf{V})^\top T
		    =T^2\frac{\partial\mathbf{U}}{\partial\mathbf{W}}T^2=\frac{\partial\mathbf{U}}{\partial\mathbf{W}}.
		\end{align*}
		The proof is completed.

\end{proof}

	Since the dissipation matrices \eqref{2DxDissMat} and \eqref{2DyDissMat} are defined at the cell interfaces, we should estimate the quantities in the dissipation matrices by some ``averged'' states. Similar to the 1D case in Theorem \ref{thm:enthalpyEvaluation}, we evaluate $h_{i+\frac{1}{2}}$ appropriately to obtain an accurate resolution of stationary contact discontinuities. The averages of the other quantities are consistent with the choice in the 1D case.

\begin{remark} \label{rmk:RRK}
	Up to now, we have achieved semi-discrete EC and ES schemes for the RHD equations, which can be written into an ODE system  
	    \begin{equation}\label{eq:ODEsemi}
		\frac{d}{dt}\mathbf{\mathcal{U}} = \mathbf{\mathcal{L}}(\mathbf{\mathcal{U}}), 
	\end{equation}
	where $\mathbf{\mathcal{U}}$ and $\mathbf{\mathcal{L}}(\mathbf{\mathcal{U}})$ in the 1D case read 
	\begin{equation*}
		\mathbf{\mathcal{U}} = \left(\mathbf{U}_1,\ldots,\mathbf{U}_N\right)^\top,\qquad
		\mathbf{\mathcal{L}}(\mathbf{\mathcal{U}}) = \left(\frac{\bar{\mathbf{F}}_{\frac{1}{2}}-\bar{\mathbf{F}}_{\frac{3}{2}}}{\Delta x},\ldots,\frac{\bar{\mathbf{F}}_{i-\frac{1}{2}}-\bar{\mathbf{F}}_{i+\frac{1}{2}}}{\Delta x},\ldots,\frac{\bar{\mathbf{F}}_{N-\frac{1}{2}}-\bar{\mathbf{F}}_{N+\frac{1}{2}}}{\Delta x}\right)^\top,
	\end{equation*}
	where $\bar{\mathbf{F}} = \widetilde{\mathbf{F}}$ for EC schemes and $\bar{\mathbf{F}} = \widehat{\mathbf{F}}$ for ES schemes.
	The semi-discrete system \eqref{eq:ODEsemi} can be further discretized in time by using some Runge--Kutta (RK) methods, for example, the classic third-order strong-stability-preserving RK (SSP-RK3) method:
\begin{equation}\label{eq:SSP-RK3}
	    \begin{aligned}
		\mathbf{\mathcal{U}}^{(1)} &= \mathbf{\mathcal{U}}^n + \Delta t \mathcal{L}(\mathbf{\mathcal{U}}^n),\\
		\mathbf{\mathcal{U}}^{(2)} &= \frac{3}{4}\mathbf{\mathcal{U}}^n + \frac{1}{4} \left(\mathbf{\mathcal{U}}^{(1)}+\Delta t\mathcal{L}(\mathbf{\mathcal{U}}^{(1)})\right),\\
		\mathbf{\mathcal{U}}^{n+1} &= \frac{1}{3}\mathbf{\mathcal{U}}^n + \frac{2}{3} \left(\mathbf{\mathcal{U}}^{(2)}+\Delta t\mathcal{L}(\mathbf{\mathcal{U}}^{(2)})\right).
	\end{aligned}
\end{equation}	
	The semi-discrete ES schemes coupled with SSP-RK3 method work well for many benchmark problems, but the rigorous analysis of the fully discrete ES property is yet unavailable in theory. 
	 To obtain the fully discrete, provably EC/ES schemes, one can employ the relaxation RK (RRK) method developed in \cite{ketcheson2019relaxation, ranocha2020relaxation}. For example, we can use the third order RRK (RRK3) method \cite{ranocha2020relaxation}: 
\begin{equation}\label{eq:RRK3}
	    \begin{aligned}
        \mathbf{\mathcal{U}}^{(1)} &= \mathbf{\mathcal{U}}^n + \Delta t \mathcal{L}(\mathbf{\mathcal{U}}^n),\\
        \mathbf{\mathcal{U}}^{(2)} &= \frac{3}{4}\mathbf{\mathcal{U}}^n + \frac{1}{4} \left(\mathbf{\mathcal{U}}^{(1)}+\Delta t\mathcal{L}(\mathbf{\mathcal{U}}^{(1)})\right),\\
        \mathbf{\mathcal{U}}^{n+1} &= \mathbf{\mathcal{U}}^n + \frac{1}{6}\gamma_n\Delta t \left(\mathcal{L}(\mathbf{\mathcal{U}}^n)+\mathcal{L}(\mathbf{\mathcal{U}}^{(1)})+4\mathcal{L}(\mathbf{\mathcal{U}}^{(2)})\right),
    \end{aligned}
\end{equation}
    where $\gamma_n$ is the relaxation parameter. 
    Define the total entropy $\mathcal{E}:=\sum\limits_{i=1}^N\eta(\mathbf{U}_i)\Delta x$, then the relaxation parameter $\gamma_n$ is computed in each time step by solving the following scalar algebraic equation
    \begin{equation*}
        r(\gamma) = \mathcal{E}(\mathbf{\mathcal{U}}^n+\gamma \mathbf{d}^n)-\mathcal{E}(\mathbf{\mathcal{U}}^n) - \gamma \epsilon=0,
    \end{equation*}
    where
    \begin{align*}
        \mathbf{d}^n &= \frac{\Delta t}{6}\left(\mathcal{L}(\mathbf{\mathcal{U}}^n)+\mathcal{L}(\mathbf{\mathcal{U}}^{(1)})+4\mathcal{L}(\mathbf{\mathcal{U}}^{(2)})\right),\\
        \epsilon &= \frac{\Delta t}{6}\sum\limits_{i=1}^N\left(\mathbf{W}(\mathbf{U}_i^n)^\top \mathcal{L}_i(\mathbf{\mathcal{U}}^n)+\mathbf{W}(\mathbf{U}_i^{(1)})^\top\mathcal{L}_i(\mathbf{\mathcal{U}}^{(1)})+4\mathbf{W}(\mathbf{U}_i^{(2)})^\top \mathcal{L}_i(\mathbf{\mathcal{U}}^{(2)}) \right) \Delta x
    \end{align*}
	with $\mathcal{L}_i(\mathbf{\mathcal{U}}):= \frac{1}{\Delta x} ( \bar{\mathbf{F}}_{i-\frac{1}{2}}-\bar{\mathbf{F}}_{i+\frac{1}{2}} )$. 
\end{remark}


\section{Numerical experiments}
\label{section:5}

In this section, we present a series of numerical experiments to demonstrate the accuracy and effectiveness of our high-order accurate EC and ES schemes for 1D and 2D RHD with various special EOSs. Specifically, we investigate the sixth-order and fourth-order accurate EC schemes, referred to as EC6 and EC4 respectively, as well as the fifth-order accurate ES scheme with WENO-based numerical flux (abbreviated as ES5), and the fourth-order accurate ES scheme with ENO-based numerical flux (abbreviated as ES4). To obtain the fully discrete schemes, we use either RRK3 \eqref{eq:RRK3} or SSP-RK3 \eqref{eq:SSP-RK3} for time discretization. To illustrate the importance of ES property, we will also present a comparison between our EC/ES schemes and a non-EC, non-ES scheme.
Unless otherwise specified, the CFL number for all tests is set as 0.4, and the Rusanov-type dissipation term \eqref{Rusanov-type} is used. The choice of EOS will be specified in each test case.

\subsection{One-dimensional examples}

\begin{example}[Accuracy test]\label{Ex5.1.1}
In this example, we evaluate the accuracy of our high-order EC and ES schemes for 1D RHD using a smooth problem with the initial data provided by
$$
\textbf{V}(x,0) = \left(1+0.2{\rm sin}(x),0.2,1\right)^\top.
$$
The periodic boundary conditions and TM-EOS \eqref{hEOS3} are employed. The exact solution of this problem is given by 
\begin{equation*}
    \textbf{V}(x,t) = \left(1+0.2{\rm sin}\left(x-0.2t\right),0.2,1\right)^\top, 
\end{equation*}
which describes a sine wave propagating in the domain $[0,2\pi]$.

To examine the spatial accuracy, we set the mesh size as $\Delta x=\frac{2\pi}{N}$ with a varying number of uniformly distributed grids $N \in \{10,20,40,80,160\}$. 
We test both RRK3 and SSP-RK3 methods for time discretization.
The time step-size is chosen to match the spatial accuracy, with $\Delta t = 0.4\Delta x^2$ for EC6, $\Delta t = 0.4\Delta x^\frac{5}{3}$ for ES5, and $\Delta t = 0.4\Delta x^\frac{4}{3}$ for EC4 and ES4, respectively. 
The quadruple precision is used for implementation.  
The $l^1$ and $l^2$ errors at $t=1.5$ in the rest-mass density $\rho$ and the corresponding rates are shown in Table \ref{table:1DEC6ES5_EOS} for EC6 and ES5, and in Table \ref{table:1DEC4ES4_EOS_3} for EC4 and ES4, respectively. 
As shown in the tables, the schemes achieve the expected convergence orders.  
We also verify the time evolution of discrete total entropy $\sum_i\eta(\mathbf{U}_i(t))\Delta x$ up to $t=1.5$. As shown in Figures \ref{Fig:evolutionDiscreteEntropy_EC6ES5} and \ref{Fig:evolutionDiscreteEntropy_EC4ES4}, the discrete total entropy decreases for ES5 and ES4 schemes, owing to their numerical dissipation mechanism, but this effect reduces as the number of grids increases. For EC6 and EC4 schemes, the discrete total entropy remains nearly unchanged with time, as expected.

\begin{table}[htb]
	\begin{center}
		\caption{$l^1$ and $l^2$ errors in $\rho$ and the corresponding convergence rates for EC6 and ES5 at different grid resolutions. }\label{table:1DEC6ES5_EOS}
		\begin{tabular}{c|c||c|c|c|c||c|c|c|c} 
			\hline
			\multirow{2}{*}{} & \multirow{2}{*}{$N$} & \multicolumn{4}{c||}{EC6} & \multicolumn{4}{c}{ES5}\\
			\cline{3-10}
			& & $l^1$ error & order & $l^2$ error & order  
			& $l^1$ error & order & $l^2$ error & order   \\
			\hline
			\multirow{5}{*}{SSP-RK3} & 10 & 1.7104e-04 & - & 9.5550e-05 & - & 6.0735e-03 & - & 2.6810e-03 & -  \\
			& 20 & 3.4854e-06 & 5.6169 & 2.0375e-06 & 5.5514 & 2.9496e-04 & 4.3639 & 1.4836e-04 & 4.1757   \\
			& 40 & 5.8181e-08 & 5.9046 & 3.4831e-08 & 5.8703 & 1.0087e-05 & 4.8699 & 5.4064e-06 & 4.7782   \\
			& 80 & 9.2642e-10 & 5.9727 & 5.5718e-10 & 5.9661 & 3.5354e-07 & 4.8345 & 1.9520e-07 & 4.7916   \\
			& 160 & 1.4706e-11 & 5.9772 & 8.7673e-12 & 5.9899 & 1.1270e-08 & 4.9713 & 6.1611e-09 & 4.9856     \\
			\hline
			\multirow{5}{*}{RRK3} & 10 & 1.7104e-04 & - & 9.5550e-05 & - & 6.0732e-03 & - & 2.6809e-03 & -  \\
			& 20 & 3.4849e-06 & 5.6170 & 2.0375e-06 & 5.5514 & 2.9494e-04 & 4.3639 & 1.4835e-04 & 4.1757   \\
			& 40 & 5.8177e-08 & 5.9045 & 3.4831e-08 & 5.8703 & 1.0087e-05 & 4.8699 & 5.4063e-06 & 4.7782   \\
			& 80 & 9.2631e-10 & 5.9728 & 5.5718e-10 & 5.9661 & 3.5353e-07 & 4.8345 & 1.9520e-07 & 4.7916   \\
			& 160 & 1.4537e-11 & 5.9937 & 8.7578e-12 & 5.9914 & 1.1270e-08 & 4.9713 & 6.1610e-09 & 4.9857   \\
			\hline
		\end{tabular}
	\end{center}
\end{table}

\begin{table}[!htb]
	\begin{center}
		\caption{Same as Table \ref{table:1DEC6ES5_EOS} except for EC4 and ES4.}\label{table:1DEC4ES4_EOS_3}
		\begin{tabular}{c|c||c|c|c|c||c|c|c|c}
			\hline
			\multirow{2}{*}{} & \multirow{2}{*}{$N$} & \multicolumn{4}{c||}{EC4} & \multicolumn{4}{c}{ES4}\\
			\cline{3-10}
			& & $l^1$ error & order & $l^2$ error & order  
			& $l^1$ error & order & $l^2$ error & order   \\
			\hline
			\multirow{5}{*}{SSP-RK3}
			& 10 & 1.2361e-03 & - & 5.7169e-04 & - & 6.4579e-03 & - & 3.3222e-03 & -   \\
			& 20 & 7.9981e-05 & 3.9500 & 3.8034e-05 & 3.9099 & 4.9999e-04 & 3.6911 & 3.2921e-04 & 3.3350   \\
			& 40 & 5.0424e-06 & 3.9875 & 2.4168e-06 & 3.9761 & 4.6449e-05 & 3.4282 & 3.2099e-05 & 3.3584  \\
			& 80 & 3.1588e-07 & 3.9967 & 1.5169e-07 & 3.9939 & 3.3745e-06 & 3.7829 & 2.7456e-06 & 3.5473   \\
			& 160 & 1.9754e-08 & 3.9991 & 9.4904e-09 & 3.9985 & 2.3313e-07 & 3.8555 & 2.3179e-07 & 3.5663     \\
			\hline
			\multirow{5}{*}{RRK3}
			& 10 & 1.2362e-03 & - & 5.7169e-04 & - & 6.4540e-03 & - & 3.3207e-03 & -   \\
			& 20 & 7.9983e-05 & 3.9500 & 3.8034e-05 & 3.9099 & 4.9997e-04 & 3.6903 & 3.2920e-04 & 3.3345   \\
			& 40 & 5.0425e-06 & 3.9875 & 2.4168e-06 & 3.9761 & 4.6448e-05 & 3.4281 & 3.2099e-05 & 3.3584  \\
			& 80 & 3.1587e-07 & 3.9967 & 1.5169e-07 & 3.9940 & 3.3745e-06 & 3.7829 & 2.7456e-06 & 3.5473   \\
			& 160 & 1.9754e-08 & 3.9991 & 9.4903e-09 & 3.9985 & 2.3313e-07 & 3.8554 & 2.3179e-07 & 3.5663     \\
			\hline
		\end{tabular}
	\end{center}
\end{table}

\begin{figure}[!htb]
	\centering
	\begin{subfigure}[t]{.48\linewidth}
		\centering
		\includegraphics[width=1\textwidth]{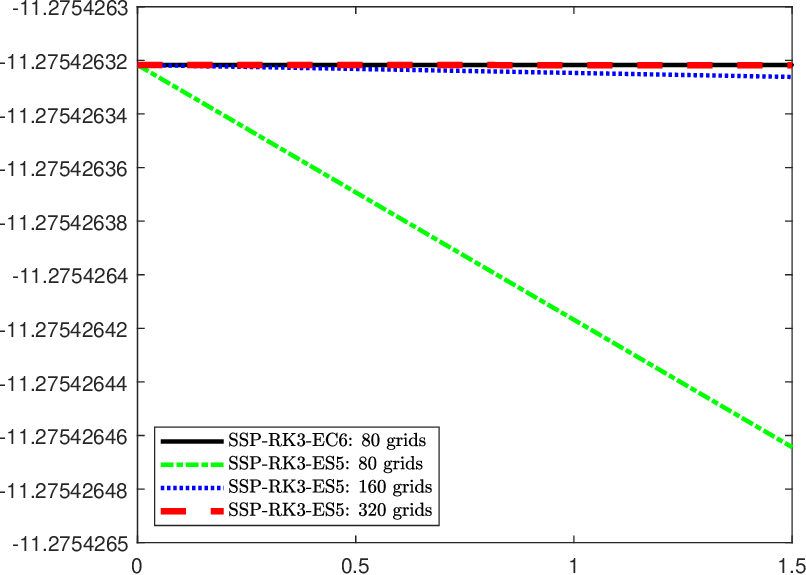}
		\caption{ SSP-RK3.}
	\end{subfigure}
	\begin{subfigure}[t]{.48\linewidth}
		\centering
		\includegraphics[width=1\textwidth]{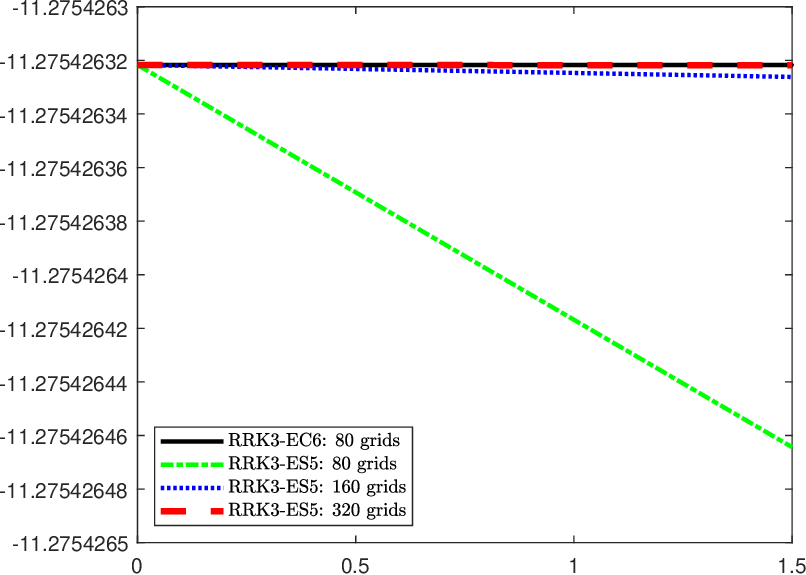}
		\caption{ RRK3.}
	\end{subfigure}
	\caption{Example \ref{Ex5.1.1}: Evolution of discrete total entropy, EC6 and ES5.}\label{Fig:evolutionDiscreteEntropy_EC6ES5}
\end{figure}

\begin{figure}[!htb]
	\centering
	\begin{subfigure}[t]{.48\linewidth}
		\centering
		\includegraphics[width=1\textwidth]{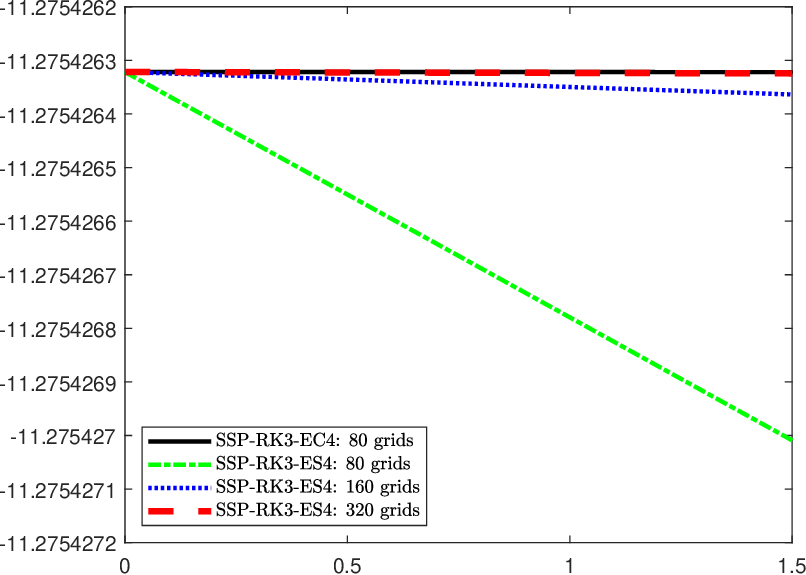}
		\caption{ SSP-RK3.}
	\end{subfigure}
	\begin{subfigure}[t]{.48\linewidth}
		\centering
		\includegraphics[width=1\textwidth]{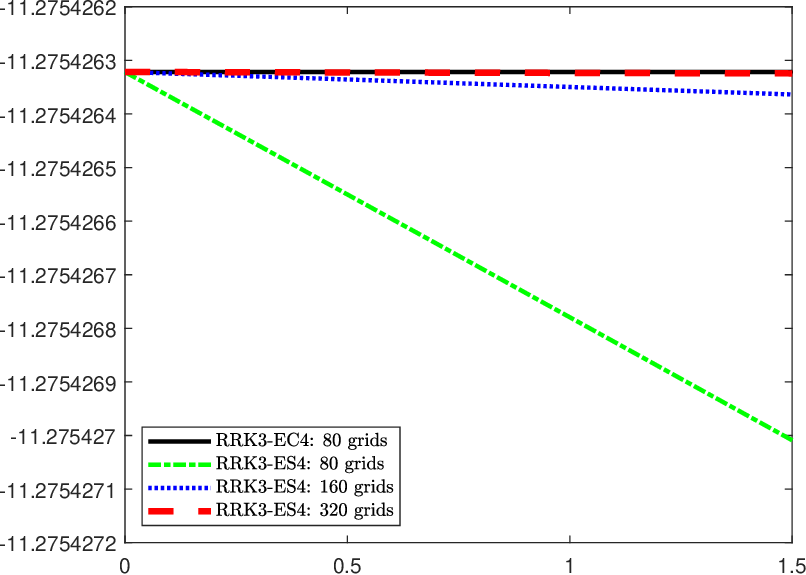}
		\caption{RRK3.}
	\end{subfigure}
	\caption{Example \ref{Ex5.1.1}: Evolution of discrete total entropy, EC4 and ES4.}\label{Fig:evolutionDiscreteEntropy_EC4ES4}
\end{figure}

\end{example}

\begin{example}[Relativistic isentropic problem]\label{REFEREE}
In this example. we investigate a truly isentropic problem \cite{radice2012thc} to study the temporal evolution of discrete total entropy. 
The initial conditions for the rest-mass density and pressure are given by
\begin{equation*}
\rho(x,0)=
\begin{cases}
    1+{\rm exp}\left[-1/(1-x^2/L^2)\right],&  |x| < L,\\
    1,& \text{otherwise},
\end{cases} \qquad p = K \rho^\Gamma. 
\end{equation*}
The initial velocity $v(x,0) = 0$ for $|x|\geq L$, while it is determined  for $|x|< L$ by  enforcing the following Riemann invariant constant  
\begin{equation*}
    J_- = \frac{1}{2}\ln\left(\frac{1+v}{1-v}\right)-\frac{1}{\sqrt{\Gamma-1}}\ln\left(\frac{\sqrt{\Gamma-1}+c_s}{\sqrt{\Gamma-1}-c_s}\right).
\end{equation*}

The computational domain is $[-0.4,2]$ with periodic boundary conditions. 
The parameters are consistent with those in \cite{radice2012thc}: $L = 0.3,\ \Gamma = \frac{5}{3},\ K=100$, and the CFL number is set to be $0.2$. 
Figure \ref{Fig:evolutionDiscreteEntropy_RF} presents the temporal evolution of the discrete total entropy, $\sum_i\eta(\mathbf{U}_i(t))\Delta x$, up to $t=0.8$ for the EC6 and ES5 schemes on different uniform mesh grids. 
One can observe that the discrete total entropy remains nearly constant for EC6 schemes, while for ES5 schemes, it decays slightly over time due to the inherent dissipation mechanism, as expected.

Furthermore, we conduct a comparative analysis between the ES/EC schemes and a fifth-order non-EC, non-ES scheme (termed non-ES5), which is constructed by adding a time-dependent sinusoidal term to the dissipation item into the EC flux: 
\begin{equation} \label{nonESFluxRF}
\widehat{\mathbf F}_{i+\frac{1}{2}}=\widetilde{\mathbf F}_{i+\frac{1}{2}}-{\frac{3}{5}\sin(50t)}\ {\mathbf D}_{i+\frac{1}{2}}\ [\![{\mathbf W}]\!]_{i+\frac{1}{2}}.
\end{equation}
For time discretization, we use the SSP-RK3 method in this comparison. The numerical results at $t=0.8$ computed with $200$ uniform grids are shown in Figure \ref{Fig:density_nonES5_RK3RF}. 
One can see that the non-ES5 solution exhibits spurious oscillations and is unable to accurately resolve the structure of the solution in comparison with EC6 and ES5.  The evolution of discrete total entropy is presented in Figure \ref{Fig:evolutionDiscreteEntropy_nonES5_RK3RF}, where we observe that the discrete total entropy generated by non-ES5 does not always diminish over time. This indicates that the non-ES5 scheme fails to satisfy the discrete entropy inequality \eqref{3.2}.

\end{example}

\begin{figure}[!htb]
	\centering
	\begin{subfigure}[t]{.48\linewidth}
		\centering
		\includegraphics[width=1\textwidth]{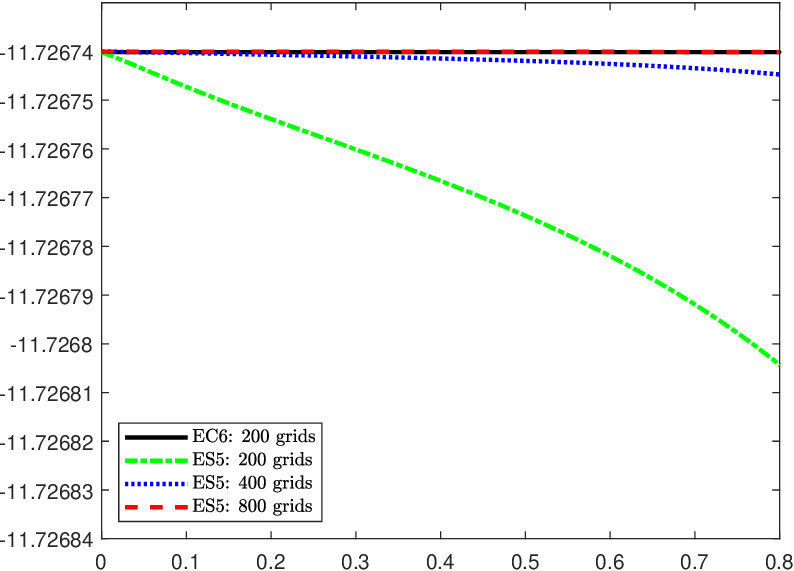}
		\caption{SSP-RK3}\label{Fig:evolutionDiscreteEntropy_RK3RF}
	\end{subfigure}
	\begin{subfigure}[t]{.48\linewidth}
		\centering
		\includegraphics[width=1\textwidth]{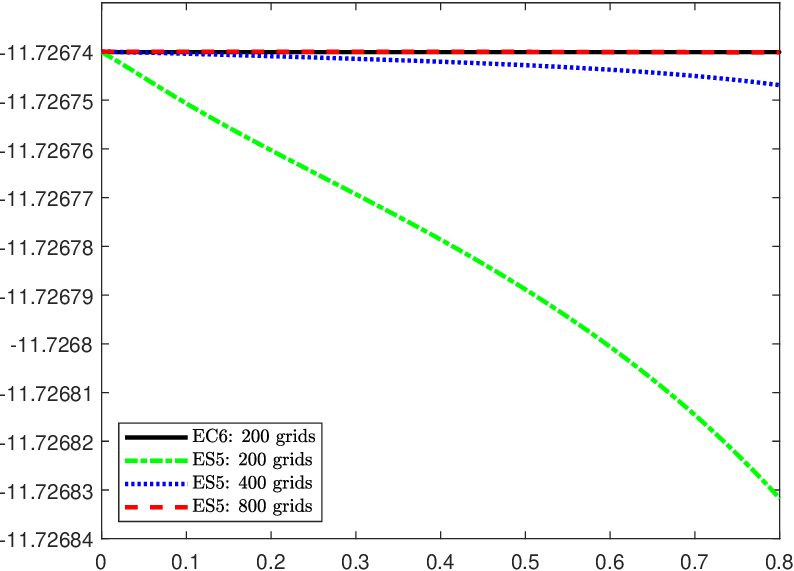}
		\caption{RRK3}\label{Fig:evolutionDiscreteEntropy_RRK3RF}
	\end{subfigure}
	\caption{Example \ref{REFEREE}: Evolution of discrete total entropy for EC and ES schemes with SSP-RK3 and RRK3 time discretization.}\label{Fig:evolutionDiscreteEntropy_RF}
\end{figure}

\begin{figure}[!htb]
	\centering
	\begin{subfigure}[t]{.48\linewidth}
		\centering
		\includegraphics[width=0.9\textwidth]{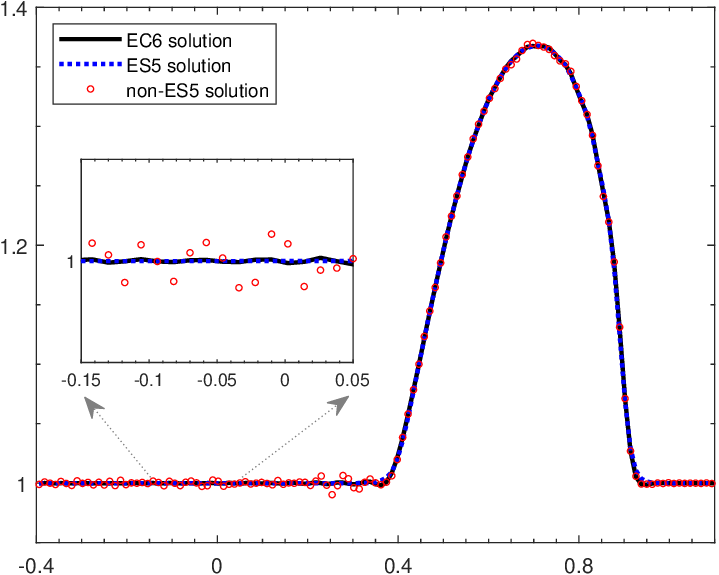}
		\caption{Numerical results of $\rho$ at $t = 0.8$.}\label{Fig:density_nonES5_RK3RF}
	\end{subfigure}
	\begin{subfigure}[t]{.48\linewidth}
		\centering
		\includegraphics[width=1\textwidth]{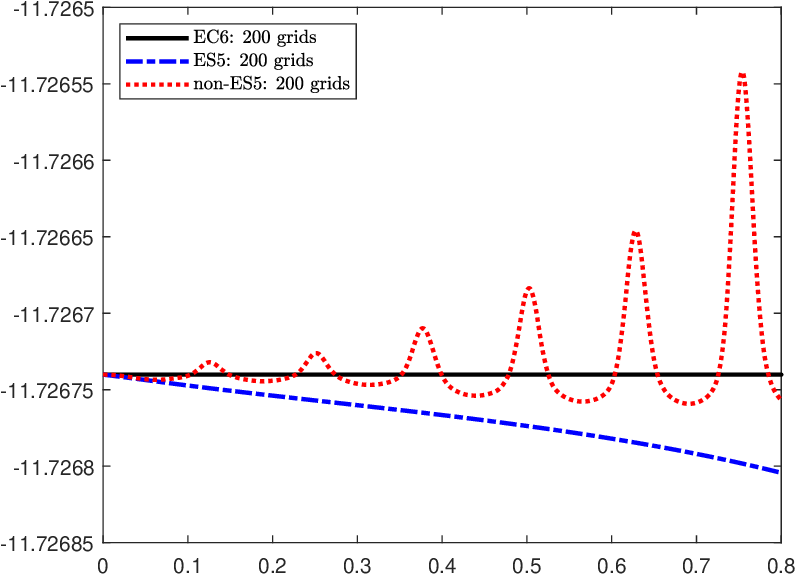}
		\caption{Evolution of discrete total entropy.} \label{Fig:evolutionDiscreteEntropy_nonES5_RK3RF}
	\end{subfigure}
	\caption{Example \ref{REFEREE}: Comparison between EC6, ES5, and  non-ES5.}\label{Fig:density_ESorNonES_RF}
\end{figure}

In the following, we test a density perturbation problem, a blast wave interaction problem, and four Riemann problems to validate the capability of our high-order ES schemes in resolving discontinuous solutions. 
Due to the complexity of obtaining the exact solutions for these problems with various EOSs, we  utilize the first-order local Lax--Friedrichs scheme with 100,000 uniform grids to produce reference solutions. The reference solutions are depicted with solid lines, while the numerical solutions with SSP-RK3 time discretization are indicated by circle markers ``${\color{blue}\circ}$''. For the examples using RRK3, the numerical results are denoted by square markers ``${\color{red}\square}$''.

\begin{example}[Density perturbation problem]\label{Ex5.1.6}
The initial conditions of this example are given by
\begin{equation*}
    \textbf{V}(x,0)=
    \begin{cases}
    (5,0,50)^\top,&  0\leq x<0.5,\\
    (2+0.3\sin(50x),0,5)^\top,& 0.5\leq x \leq 1,
    \end{cases}
\end{equation*}
which introduce a sine-type perturbation to the rest-mass density \cite{del2002efficient}. This problem models the interaction between a shock and a sine wave. We adopt the RC-EOS \eqref{hEOS1} for this test. The outflow boundary conditions are imposed on both the left and right boundaries of the domain $[0,1]$, by setting the data for all left (and similarly, right) ghost points to match the values of the nearest computational points. 
We numerically simulate this problem by using ES5 on 400 uniform grids up to time $t=0.376$. The computational results are shown in Figure \ref{Fig:1D_densityperturb_EOS_RRK}. We observe that ES5 resolves the small perturbation waves with high fidelity. To verify the ES property of our scheme, we compute the discrete total entropy $\sum_i\eta(\mathbf{U}_i(t))\Delta x$, which should decrease over time. The resulting plot is shown in Figure \ref{Fig:evolutionDiscreteEntropy_density}, and we observe the expected decay of the discrete total entropy, which confirms the ES property of the scheme.
\end{example}

\begin{figure}[!htb]
	\centering
	\begin{subfigure}[t]{.3\linewidth}
		\centering
		\includegraphics[width=0.99\textwidth]{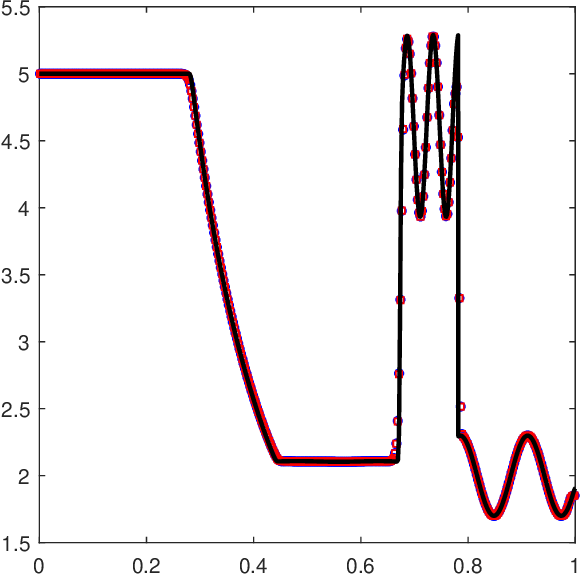}
		\caption{$\rho$}
	\end{subfigure}
	\begin{subfigure}[t]{.3\linewidth}
		\centering
		\includegraphics[width=1\textwidth]{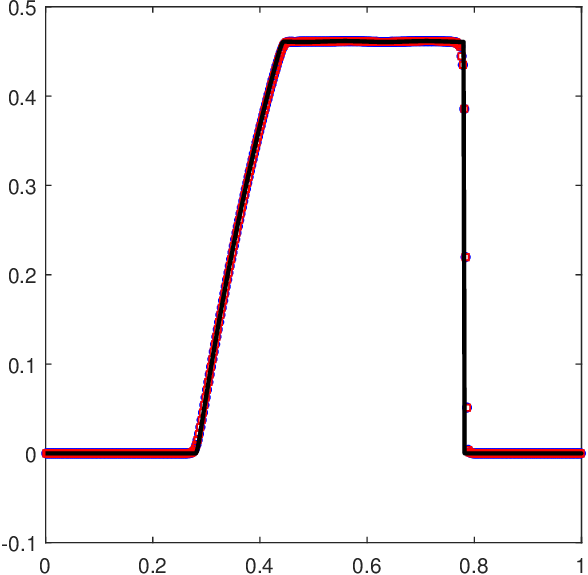}
		\caption{$v_1$}
	\end{subfigure}
	\begin{subfigure}[t]{.3\linewidth}
		\centering
		\includegraphics[width=0.98\textwidth]{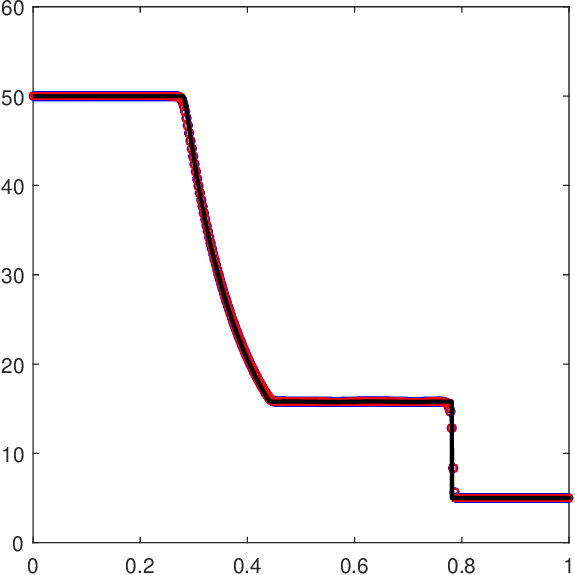}
		\caption{\color{blue}$p$}
	\end{subfigure}
	\caption{Example \ref{Ex5.1.6}: Numerical results obtained by ES5 with SSP-RK3 (circle markers ``${\color{blue}\circ}$'') and RRK3 (square markers ``${\color{red}\square}$'')  at $t=0.376$. RC-EOS \eqref{hEOS1} is used.}\label{Fig:1D_densityperturb_EOS_RRK}
\end{figure}

\begin{figure}[!htb]
	\centering
	\begin{subfigure}[t]{.48\linewidth}
		\centering
		\includegraphics[width=0.95\textwidth]{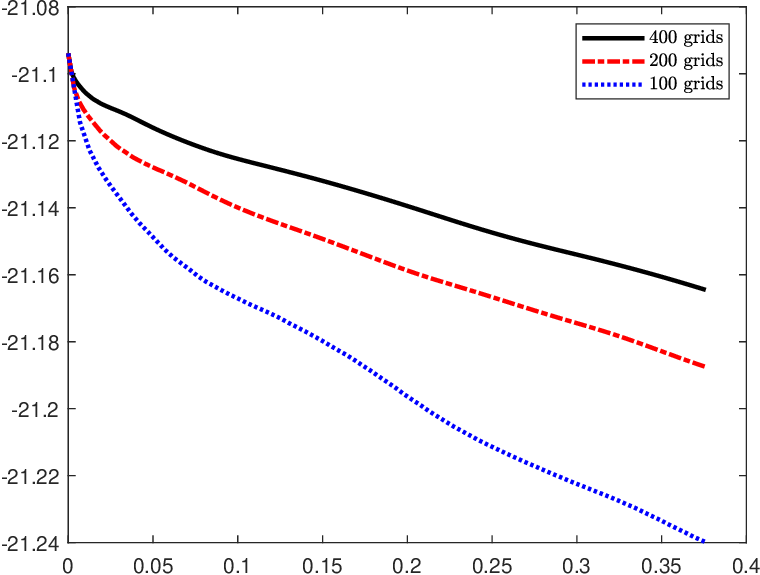}
	\end{subfigure}
	\begin{subfigure}[t]{.48\linewidth}
		\centering
		\includegraphics[width=0.95\textwidth]{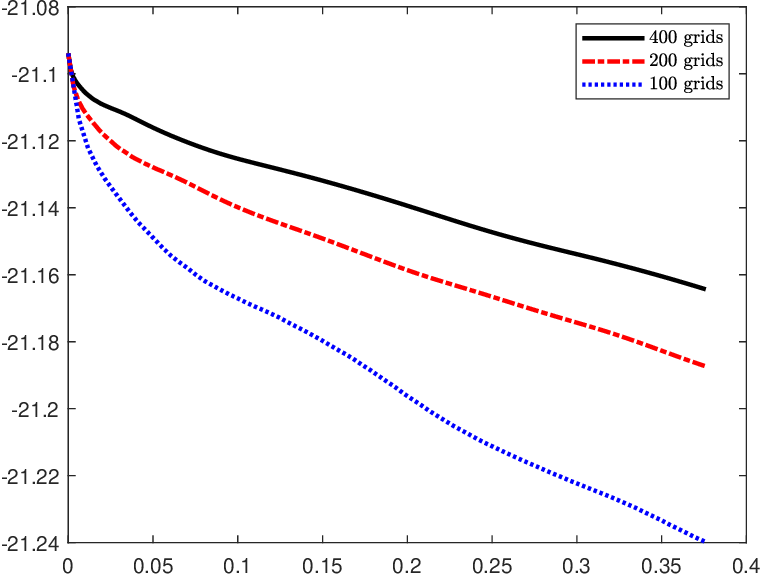}
	\end{subfigure}
	\caption{Evolution of discrete total entropy for Example \ref{Ex5.1.3}: SSP-RK3 (left) and RRK3 (right).}\label{Fig:evolutionDiscreteEntropy_density}
\end{figure}

\begin{example}[Blast wave interaction]\label{Ex5.1.7}
The final 1D example simulates the interaction between two strong relativistic blast waves in the domain $[0,1]$. 
The initial conditions are defined as follows: 
\begin{equation*}
    \textbf{V}(x,0)=
    \begin{cases}
    (1,0,10^3)^\top,& 0\leq x<0.1,\\
    (1,0,10^{-2})^\top,& 0.1\leq x < 0.9,\\
    (1,0,10^2)^\top,& 0.9\leq x \leq 1,
    \end{cases}
\end{equation*} 
We employ TM-EOS \eqref{hEOS3} as our EOS, and apply outflow boundary conditions at both left and right boundaries. At $t=0.43$, the solutions form a complex wave structure, which encompasses three contact discontinuities and two shock waves within the interval $[0.5,0.53]$. We compute the reference solution via a first-order local Lax--Friedrichs scheme, using an ultra-fine mesh of $200,000$ uniform cells. The numerical solutions obtained by ES5 on 4000 uniform grids are presented in Figure \ref{Fig:1D_blast_EOS_RRK}, where we observe good agreement between the numerical and reference solutions. 
Furthermore, we compute the discrete total entropy, which is shown in Figure \ref{Fig:evolutionDiscreteEntropy_blast}. We can observe that the discrete total entropy is decreasing over time, indicating that the fully discrete scheme is ES.
\end{example}

\begin{figure}[!htb]
	\centering
	\begin{subfigure}[t]{.3\linewidth}
		\centering
		\includegraphics[width=1\textwidth]{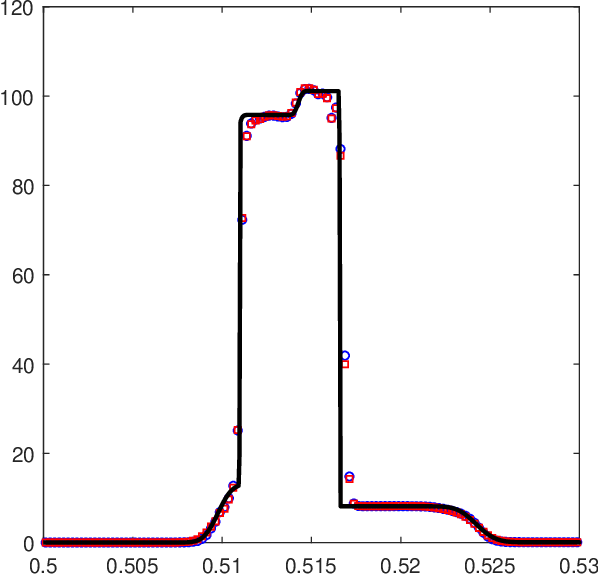}
		\caption{\color{blue}$\rho$}
		\label{Fig:1D_blast_EOS_density}
	\end{subfigure}
	\begin{subfigure}[t]{.3\linewidth}
		\centering
		\includegraphics[width=1\textwidth]{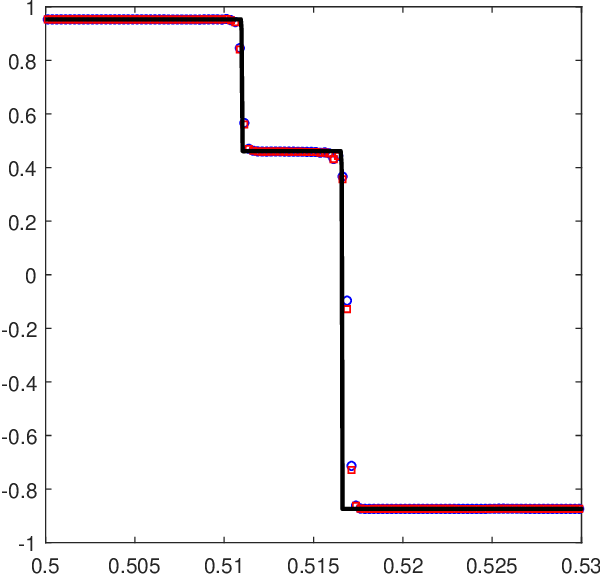}
		\caption{\color{blue}$v_1$}
		\label{Fig:1D_blast_EOS_velocity}
	\end{subfigure}
	\begin{subfigure}[t]{.3\linewidth}
		\centering
		\includegraphics[width=1\textwidth]{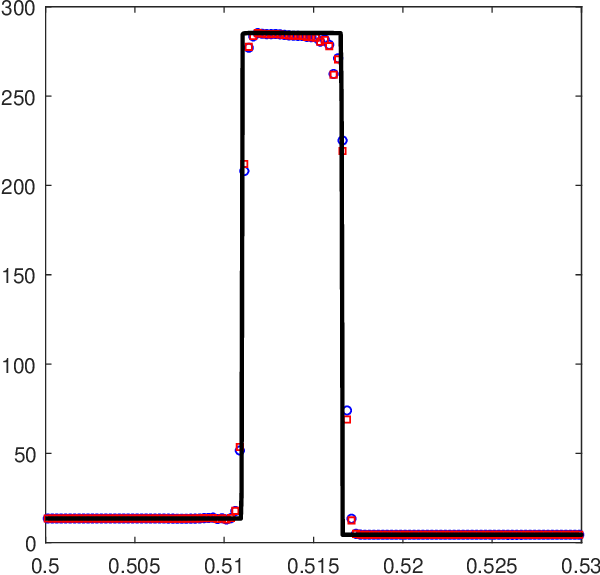}
		\caption{\color{blue}$p$}
		\label{Fig:1D_blast_EOS_pressure}
	\end{subfigure}
	\caption{Example \ref{Ex5.1.7}: Numerical results obtained by ES5 with SSP-RK3 (circle markers ``${\color{blue}\circ}$'') and RRK3 (square markers ``${\color{red}\square}$'') at $t=0.43$. TM-EOS \eqref{hEOS3} is used.}\label{Fig:1D_blast_EOS_RRK}
\end{figure}

\begin{figure}[!htb]
	\centering
	\begin{subfigure}[t]{.48\linewidth}
		\centering
		\includegraphics[width=0.95\textwidth]{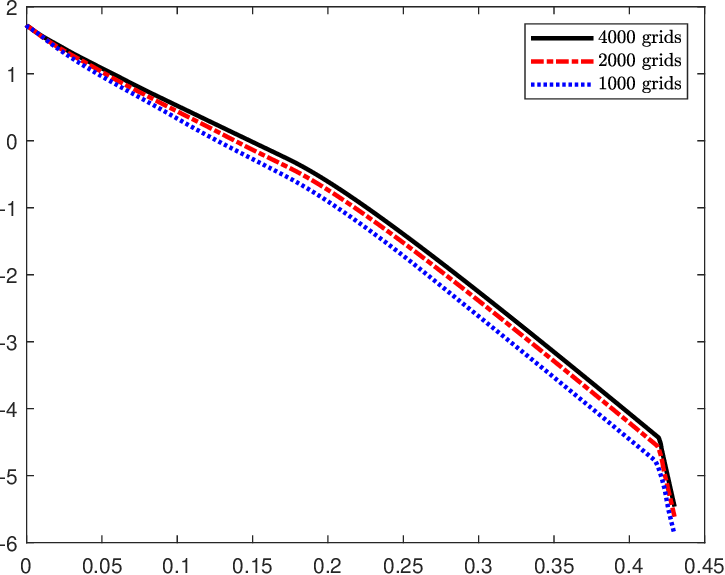}
	\end{subfigure}
	\begin{subfigure}[t]{.48\linewidth}
		\centering
		\includegraphics[width=0.95\textwidth]{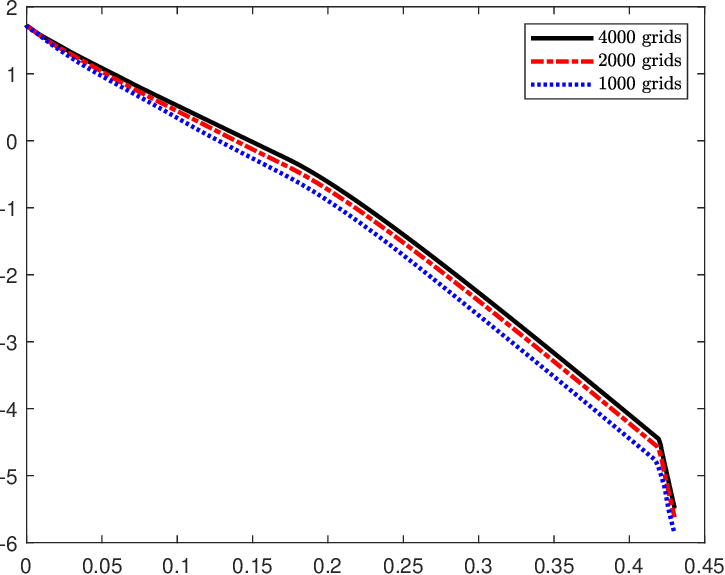}
	\end{subfigure}
	\caption{Evolution of discrete total entropy for Example \ref{Ex5.1.3}: SSP-RK3 (left) and RRK3 (right).}\label{Fig:evolutionDiscreteEntropy_blast}
\end{figure}

\begin{example}[1D Riemann problem I]\label{Ex5.1.2}
The initial conditions are taken as 
\begin{equation*}
\textbf{V}(x,0)=
\begin{cases}
    (10,0,\frac{40}{3})^\top,&  0\leq x<0.5,\\
    (1,0,10^{-6})^\top,& 0.5\leq x \leq 1.
\end{cases}
\end{equation*}
The computational domain $[0,1]$ is divided into $400$ uniform cells, and we use the outflow boundary conditions. For this example, we use the RC-EOS \eqref{hEOS1} and set the CFL number as $0.1$.  The exact solution contains a left-moving rarefaction wave, a contact discontinuity, and a right-moving shock over time.   
The numerical solutions at $t=0.4$ obtained by ES5 are shown in Figure \ref{Fig:1D_RP1_EOS_RRK}. We can see that the computed solutions agree with the reference solution, and the wave structures are well captured by ES5. To verify the ES property, we examine the evolution of discrete total entropy, as shown in Figure \ref{Fig:evolutionDiscreteEntropy_RP1}. We observe that the total numerical entropy $\sum_i\eta(\mathbf{U}_i(t))\Delta x$ decreases over time, as expected. 
\end{example}

\begin{figure}[!htb]
	\centering
	\begin{subfigure}[t]{.3\linewidth}
		\centering
		\includegraphics[width=0.975\textwidth]{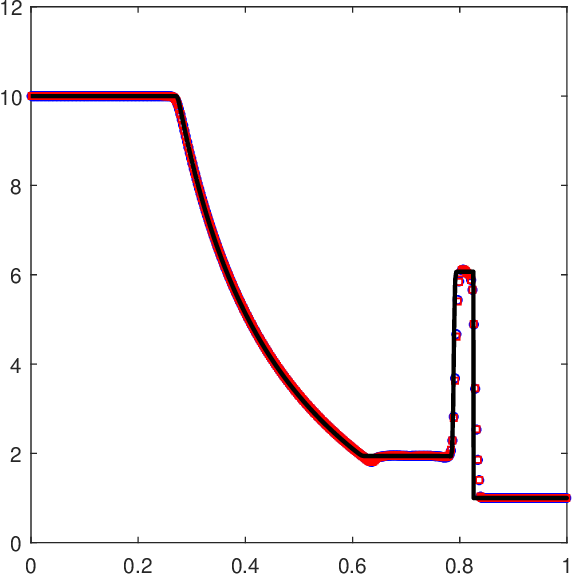}
		\caption{$\rho$}
		\label{Fig:1D_RP1_EOS_density_RRK}
	\end{subfigure}
	\begin{subfigure}[t]{.3\linewidth}
		\centering
		\includegraphics[width=1\textwidth]{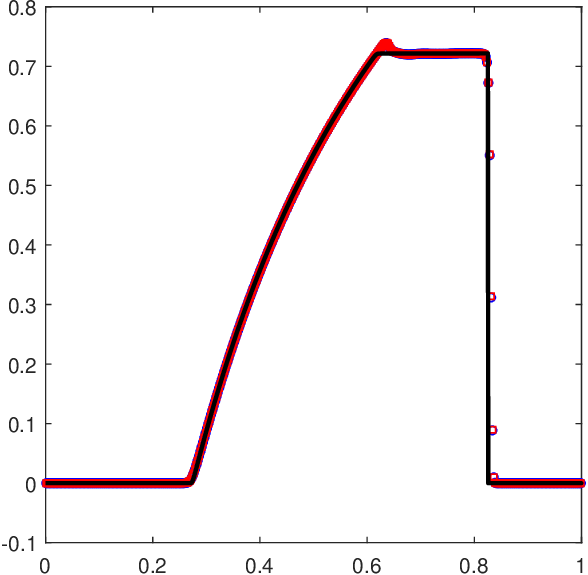}
		\caption{$v_1$}
		\label{Fig:1D_RP1_EOS_velocity_RRK}
	\end{subfigure}
	\begin{subfigure}[t]{.3\linewidth}
		\centering
		\includegraphics[width=0.975\textwidth]{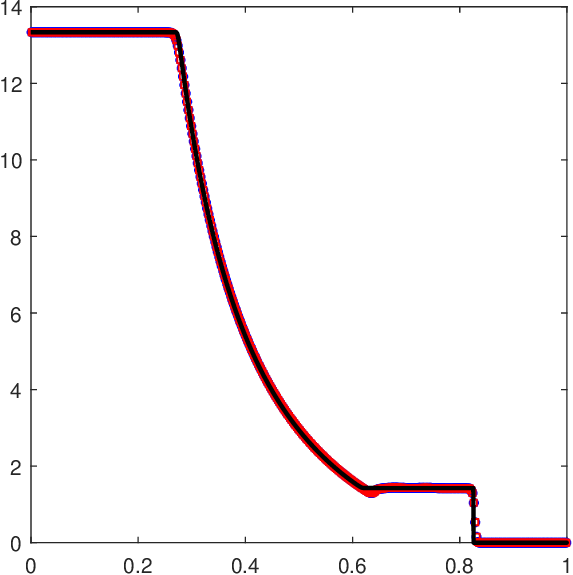}
		\caption{$p$}
		\label{Fig:1D_RP1_EOS_pressure_RRK}
	\end{subfigure}
	\caption{Example \ref{Ex5.1.2}: Numerical results obtained by ES5 with SSP-RK3 (circle markers ``${\color{blue}\circ}$'') and RRK3 (square markers ``${\color{red}\square}$'') at $t=0.4$, RC-EOS \eqref{hEOS1}.}\label{Fig:1D_RP1_EOS_RRK}
\end{figure}

\begin{figure}[!htb]
	\centering
	\begin{subfigure}[t]{.48\linewidth}
		\centering
		\includegraphics[width=0.95\textwidth]{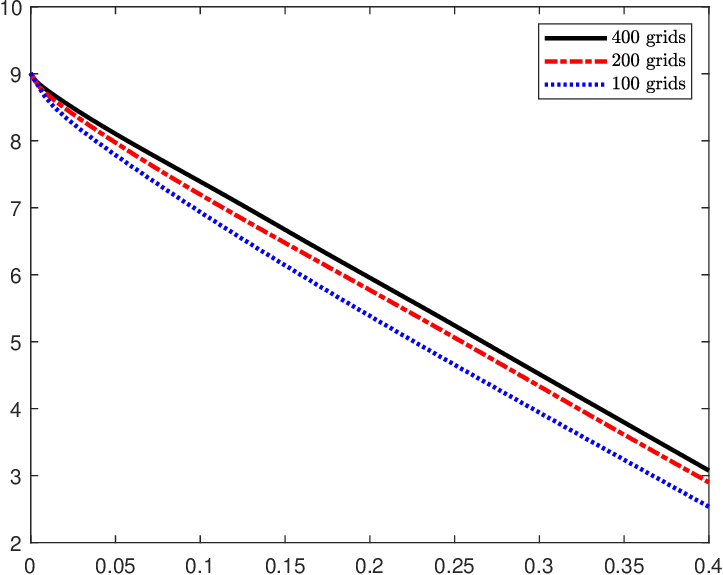}
	\end{subfigure}
	\begin{subfigure}[t]{.48\linewidth}
		\centering
		\includegraphics[width=0.95\textwidth]{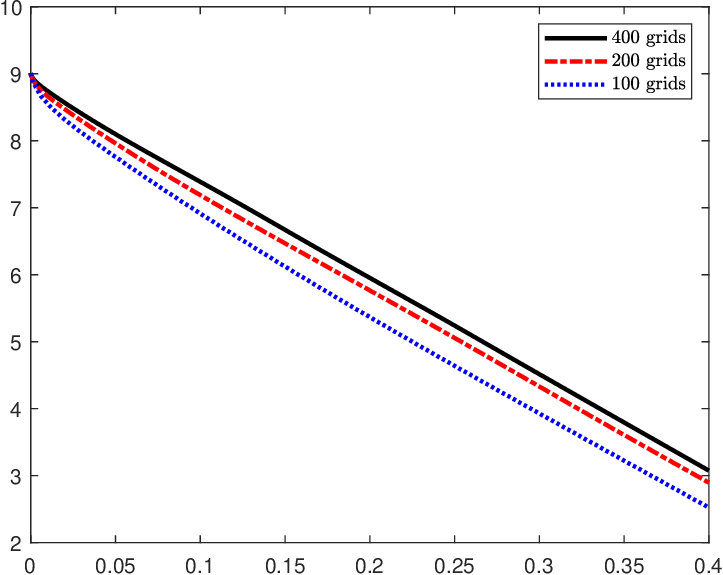}
	\end{subfigure}
	\caption{Evolution of discrete total entropy for Example \ref{Ex5.1.2}: SSP-RK3 (left) and RRK3 (right).}\label{Fig:evolutionDiscreteEntropy_RP1}
\end{figure}

\begin{example}[1D Riemann problem II]\label{Ex5.1.3}
The wave structure of this problem is akin to that of Example \ref{Ex5.1.2}. 
However, the region between the contact discontinuity and the right-moving shock is significantly narrow, which poses a challenge to the simulation. 
The initial conditions of the problem are as follows:
\begin{equation*}
    \textbf{V}(x,0)=
    \begin{cases}
    (1,0,10^{3})^\top,&  0\leq x<0.5,\\
    (1,0,10^{-2})^\top,&  0.5\leq x \leq 1.
    \end{cases}
\end{equation*}
We adopt the TM-EOS \eqref{hEOS3} for this test case and use the outflow boundary conditions. Figure \ref{Fig:1D_RP2_EOS_RRK} shows the numerical solutions of the rest-mass density $\rho$, the velocity $v_1$, and the pressure $p$ obtained by ES5 with 400 uniform grids at $t=0.4$.  The results demonstrate that our ES5 method can effectively resolve wave structures without producing any significant oscillations. The evolution of the discrete total entropy is depicted in Figure \ref{Fig:evolutionDiscreteEntropy_RP2}, which reveals a dissipated entropy. This verifies the entropy stability of the fully discrete scheme.

\end{example}

\begin{figure}[!htb]
	\centering
	\begin{subfigure}[t]{.3\linewidth}
		\centering
		\includegraphics[width=0.96\textwidth]{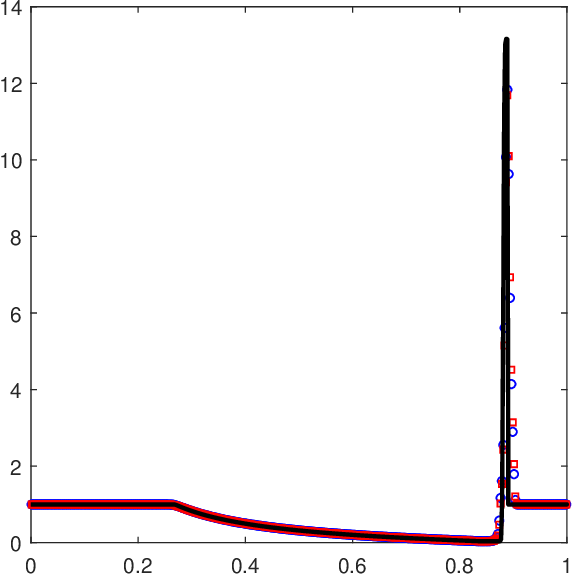}
		\caption{$\rho$}
		\label{Fig:1D_RP2_EOS_density_RRK}
	\end{subfigure}
	\begin{subfigure}[t]{.3\linewidth}
		\centering
		\includegraphics[width=0.97\textwidth]{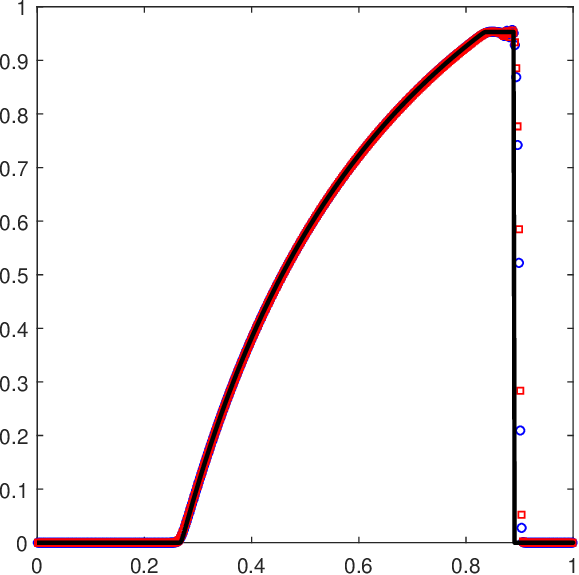}
		\caption{$v_1$}
		\label{Fig:1D_RP2_EOS_velocity_RRK}
	\end{subfigure}
	\begin{subfigure}[t]{.3\linewidth}
		\centering
		\includegraphics[width=1\textwidth]{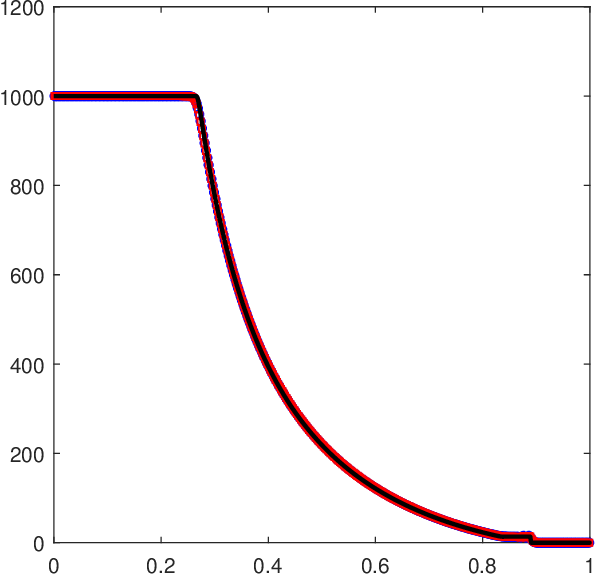}
		\caption{$p$}
		\label{Fig:1D_RP2_EOS_pressure_RRK}
	\end{subfigure}
	\caption{Example \ref{Ex5.1.3}: Numerical results obtained by ES5 with SSP-RK3 (circle markers ``${\color{blue}\circ}$'') and RRK3 (square markers ``${\color{red}\square}$'') at $t=0.4$. TM-EOS \eqref{hEOS3} is used.}\label{Fig:1D_RP2_EOS_RRK}
\end{figure}

\begin{figure}[!htb]
	\centering
	\begin{subfigure}[t]{.48\linewidth}
		\centering
		\includegraphics[width=0.95\textwidth]{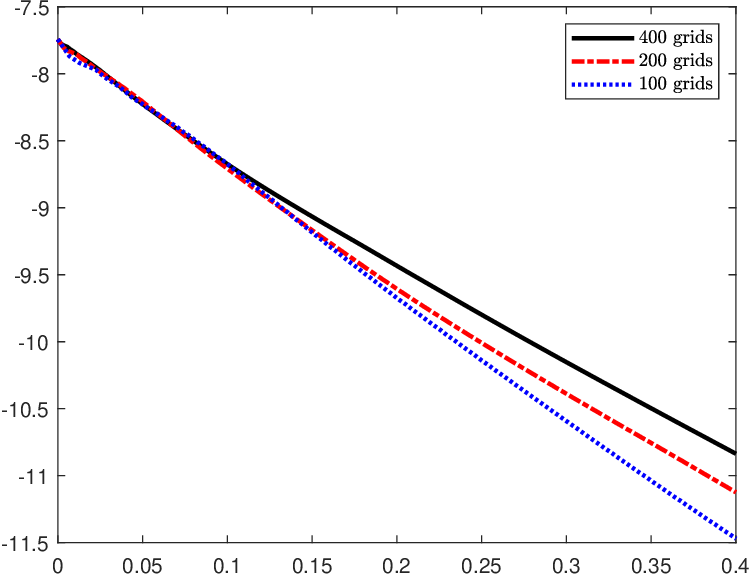}
	\end{subfigure}
	\begin{subfigure}[t]{.48\linewidth}
		\centering
		\includegraphics[width=0.95\textwidth]{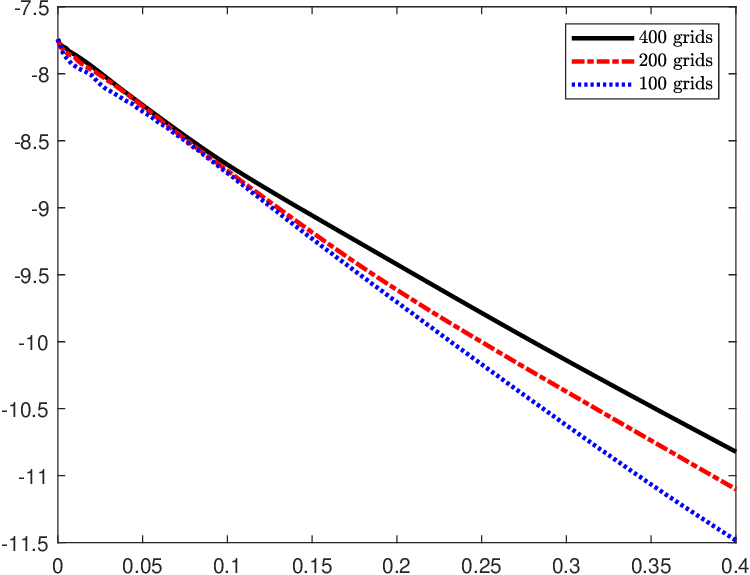}
	\end{subfigure}
	\caption{Evolution of discrete total entropy for Example \ref{Ex5.1.3}: SSP-RK3 (left) and RRK3 (right).}\label{Fig:evolutionDiscreteEntropy_RP2}
\end{figure}

	As observed from the above examples, the numerical results with SSP-RK3 and RRK3 time discretization are very close for both smooth and discontinuous problems. 
	 To save space, in the following, we will only present the numerical results with the classic SSP-RK3 time discretization.

\begin{example}[1D Riemann problem III]\label{Ex5.1.4}
The initial conditions of this Riemann problem are
\begin{equation*}
    \textbf{V}(x,0)=
    \begin{cases}
    (1,0.9,1)^\top,&  0\leq x<0.5,\\
    (1,0,10)^\top,&  0.5\leq x \leq 1.
    \end{cases}
\end{equation*}
This test models a contact discontinuity and two shock waves that propagate in opposite directions. The spatial domain $[0,1]$ is divided into 400 uniform cells, with outflow boundary conditions. The RC-EOS \eqref{hEOS1} is used for this example, and the outflow boundary conditions are employed.  
We compare the ES5 scheme and a non-EC, non-ES scheme (termed non-ES5), which is constructed by augmenting the dissipative term in the numerical flux \eqref{3.1} with a sinusoidal term: 
\begin{equation} \label{nonESFluxRP3}
\widehat{\mathbf F}_{i+\frac{1}{2}}=\widetilde{\mathbf F}_{i+\frac{1}{2}}-{\frac{6}{5}\sin(7.6t+0.1)}\ \mathbf{D}_{i+\frac{1}{2}}\ [\![\mathbf{W}]\!]_{i+\frac{1}{2}}.
\end{equation}
For comparison purpose, the computational settings of non-ES5 are the same as ES5. 
The numerical solutions obtained by using ES5 and non-ES5 at $t=0.4$ are shown in Figure \ref{Fig:1D_RP3_EOS_nonES}, where the ES5 solution is indicated by circle markers ``${\color{blue}\circ}$'' and the non-ES solution is denoted by square markers ``${\color{red}\square}$''. 
One can see that 
the ES5 solution is in good agreement with the reference one, while the non-ES5 solution exhibits significant nonphysical oscillations.

\end{example}

\begin{figure}[!htb]
	\centering
	\begin{subfigure}[t]{.3\linewidth}
		\centering
		\includegraphics[width=0.985\textwidth]{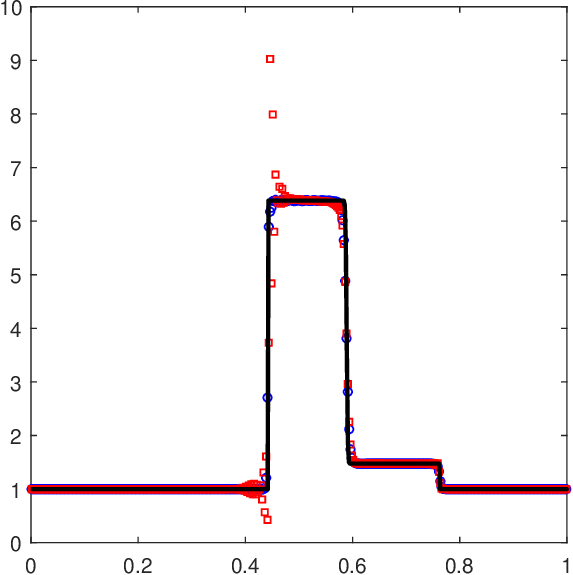}
		\caption{$\rho$}
	\end{subfigure}
	\begin{subfigure}[t]{.3\linewidth}
		\centering
		\includegraphics[width=1\textwidth]{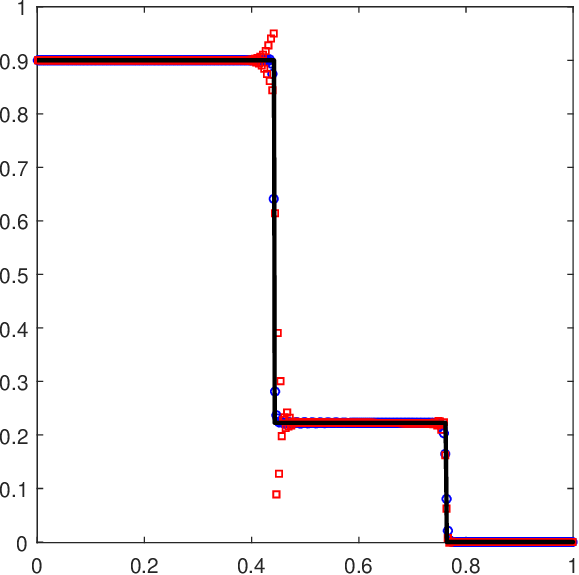}
		\caption{$v_1$}
	\end{subfigure}
	\begin{subfigure}[t]{.3\linewidth}
		\centering
		\includegraphics[width=0.985\textwidth]{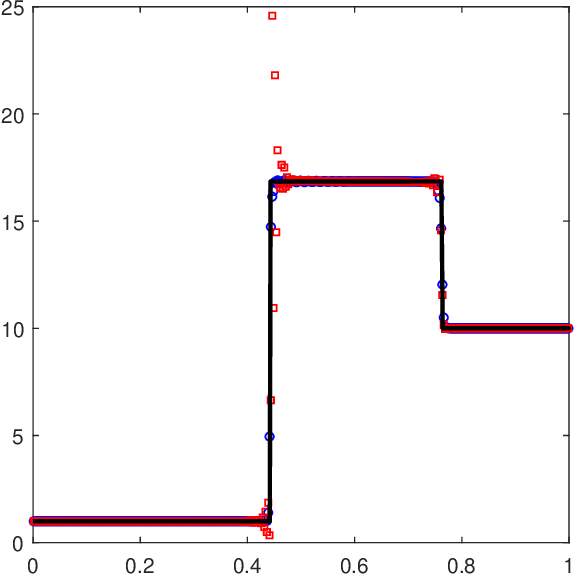}
		\caption{$p$}
	\end{subfigure}
	\caption{Example \ref{Ex5.1.4}: Numerical results obtained by ES5 (symbols ``${\color{blue}\circ}$'') and non-ES5 (symbols ``${\color{red}\square}$'') schemes at $t=0.4$. RC-EOS \eqref{hEOS1} is used.}\label{Fig:1D_RP3_EOS_nonES}
\end{figure}

\begin{example}[1D Riemann problem IV]\label{Ex5.1.5}
For this test, we consider the following initial data
\begin{equation*}
    \textbf{V}(x,0)=
    \begin{cases}
    (1,-0.7,20)^\top,& 0\leq x<0.5,\\
    (1,0.7,20)^\top,& 0.5\leq x \leq 1.
    \end{cases}
\end{equation*}
We choose the IP-EOS \eqref{hEOS2} for this problem. 
The solution consists of a contact discontinuity, and two rarefaction waves moving left and right, respectively. 
Figure \ref{Fig:1D_RP4_EOS} gives the numerical solutions at $t=0.4$ obtained by using ES5 on 400 uniform grids. The wave patterns of the numerical solutions are consistent with the reference ones, but there exists a undershoot for the rest-mass density $\rho$ at $x=0.5$. This phenomenon was also observed in the results obtained using the ID-EOS \eqref{ID-EOS} as reported in \cite{duan2019high}.

\end{example}

\begin{figure}[!htb]
	\centering
	\begin{subfigure}[t]{.3\linewidth}
		\centering
		\includegraphics[width=0.985\textwidth]{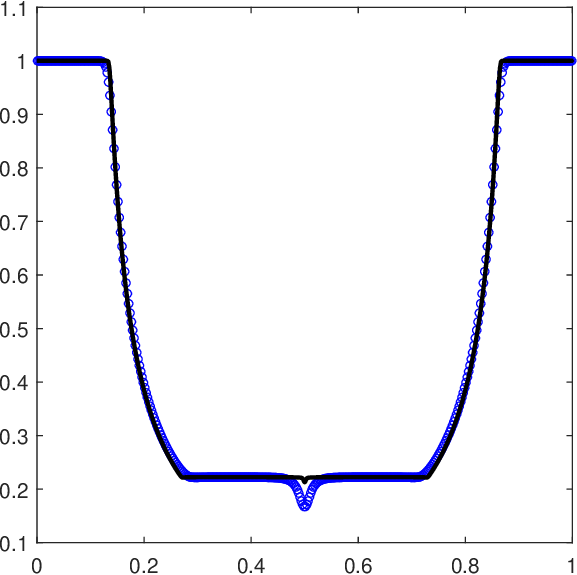}
		\caption{$\rho$}
		\label{Fig:1D_RP4_EOS_density}
	\end{subfigure}
	\begin{subfigure}[t]{.3\linewidth}
		\centering
		\includegraphics[width=1\textwidth]{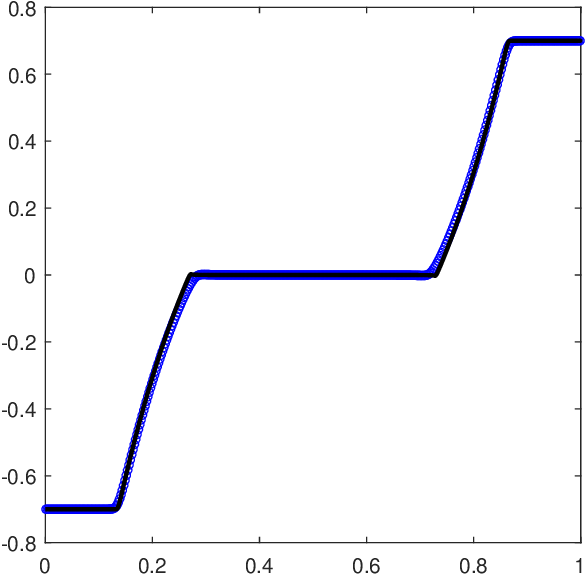}
		\caption{$v_1$}
		\label{Fig:1D_RP4_EOS_velocity}
	\end{subfigure}
	\begin{subfigure}[t]{.3\linewidth}
		\centering
		\includegraphics[width=0.985\textwidth]{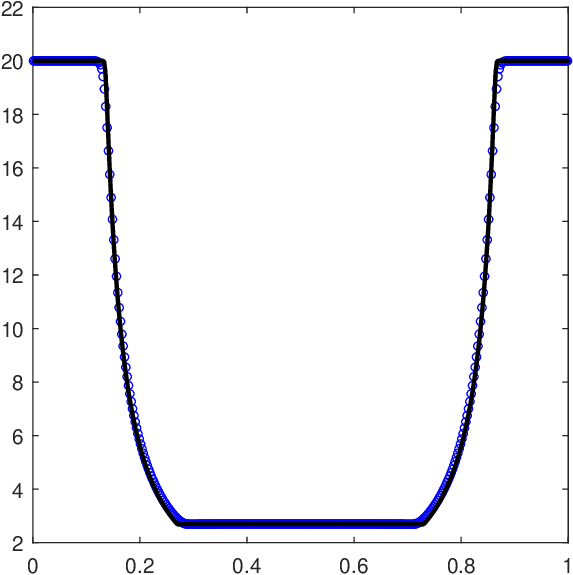}
		\caption{$p$}
		\label{Fig:1D_RP4_EOS_pressure}
	\end{subfigure}
	\caption{Example \ref{Ex5.1.5}: Numerical results obtained by ES5 at $t=0.4$. IP-EOS \eqref{hEOS2} is used.}\label{Fig:1D_RP4_EOS}
\end{figure}

\subsection{Two-dimensional examples}
\begin{example}[Accuracy test]\label{Ex5.2.1}
This example investigates a 2D smooth problem with periodic boundary conditions in the domain $[0,2\pi]^2$. The initial conditions are given by 
$$
\textbf{V}(x,y,0) = \left(1+0.2{\rm sin}(x+y),0.2,0.2,1\right)^\top.
$$
We consider the three EOSs \eqref{hEOS1}, \eqref{hEOS2}, and \eqref{hEOS3}, repetitively. 
The exact solution is  
\begin{equation*}
    \textbf{V}(x,y,t) = \left(1+0.2{\rm sin}\left(x+y-0.4t\right),0.2,0.2,1\right)^\top.
\end{equation*}
To investigate the spatial accuracy, we set the mesh size as $\Delta x=\Delta y = \frac{2\pi}{N}$ with  varying  $N \in \{10,20,40,80,160\}$.
The time step-size is chosen to match the spatial accuracy: $\Delta t = 0.4\Delta x^2$ for EC6 and $\Delta t = 0.4\Delta x^\frac{5}{3}$ for ES5. We report the $l^1$ and $l^2$ errors at $t = 0.1$ for the rest-mass density and corresponding orders in Tables \ref{table:2DEC6ES5_EOS_1}, \ref{table:2DEC6ES5_EOS_2}, and \ref{table:2DEC6ES5_EOS_3} for RC-EOS \eqref{hEOS1}, IP-EOS \eqref{hEOS2}, and TM-EOS \eqref{hEOS3}, respectively. We observe the expected convergence rates for ES5 and EC6. 
 In addition, we analyze the evolution of the discrete total entropy $\sum_{i,j}\eta(\mathbf{U}_{i,j}(t))\Delta x\Delta y$, as shown in Figure \ref{Fig:2DevolutionDiscreteEntropy}. The results indicate that the total numerical entropy decreases over time for ES5, while it almost remains constant for EC6, as expected.
\end{example}

\begin{table}[!htb]
	\begin{center}
		\caption{Numerical errors and convergence rates for EC6 and ES5 at different grid resolutions. RC-EOS \eqref{hEOS1} is used.}\label{table:2DEC6ES5_EOS_1}
		\begin{tabular}{c||c|c|c|c||c|c|c|c} 
			\hline
			\multirow{2}{*}{N} & \multicolumn{4}{c||}{EC6} & \multicolumn{4}{c}{ES5}\\
			\cline{2-9}
			& $l^1$ error & order & $l^2$ error & order  
			& $l^1$ error & order & $l^2$ error & order   \\
			\hline
			$10\times 10$ & 1.3460e-04 & - & 3.0063e-05 & - & 5.9926e-03 & - & 1.1046e-03 & -   \\
			$20\times 20$ & 2.6420e-06 & 5.6709 & 6.2944e-07 & 5.5778 & 2.4259e-04 & 4.6266 & 4.9739e-05 & 4.4730    \\
			$40\times 40$ & 4.4528e-08 & 5.8908 & 1.0725e-08 & 5.8750 & 7.7940e-06 & 4.9600 & 1.8410e-06 & 4.7558    \\
			$80\times 80$ & 7.0904e-10 & 5.9727 & 1.7143e-10 & 5.9672 & 3.3540e-07 & 4.5384 & 7.4067e-08 & 4.6355   \\
			$160\times 160$ & 1.1383e-11 & 5.9609 & 2.7142e-12 & 5.9810 & 1.1340e-08 & 4.8863 & 2.4839e-09 & 4.8982    \\
			\hline
		\end{tabular}
	\end{center}
\end{table}

\begin{table}[!htb]
	\begin{center}
		\caption{Numerical errors and convergence rates for EC6 and ES5 at different grid resolutions. IP-EOS \eqref{hEOS2} is used.}\label{table:2DEC6ES5_EOS_2}
		\begin{tabular}{c||c|c|c|c||c|c|c|c} 
			\hline
			\multirow{2}{*}{N} & \multicolumn{4}{c||}{EC6} & \multicolumn{4}{c}{ES5}\\
			\cline{2-9}
			& $l^1$ error & order & $l^2$ error & order  
			& $l^1$ error & order & $l^2$ error & order   \\
			\hline
			$10\times 10$ & 1.3452e-04 & - & 3.0053e-05 & - & 6.3843e-03 & - & 1.1773e-03 & -   \\
			$20\times 20$ & 2.6397e-06 & 5.6713 & 6.2929e-07 & 5.5777 & 2.6453e-04 & 4.5930 & 5.3620e-05 & 4.4565    \\
			$40\times 40$ & 4.4492e-08 & 5.8907 & 1.0723e-08 & 5.8750 & 8.3460e-06 & 4.9862 & 1.9423e-06 & 4.7869    \\
			$80\times 80$ & 7.0851e-10 & 5.9726 & 1.7140e-10 & 5.9672 & 3.5366e-07 & 4.5607 & 7.8162e-08 & 4.6352   \\
			$160\times 160$ & 1.1415e-11 & 5.9558 & 2.7108e-12 & 5.9825 & 1.1903e-08 & 4.8929 & 2.6101e-09 & 4.9043     \\
			\hline
		\end{tabular}
	\end{center}
\end{table}

\begin{table}[!htb]
	\begin{center}
		\caption{Numerical errors and convergence rates for EC6 and ES5 at different grid resolutions. TM-EOS \eqref{hEOS3} is used.}\label{table:2DEC6ES5_EOS_3}
		\begin{tabular}{c||c|c|c|c||c|c|c|c}
			\hline
			\multirow{2}{*}{N} & \multicolumn{4}{c||}{EC6} & \multicolumn{4}{c}{ES5}\\
			\cline{2-9}
			& $l^1$ error & order & $l^2$ error & order  
			& $l^1$ error & order & $l^2$ error & order   \\
			\hline
			$10\times 10$ & 1.3449e-04 & - & 3.0044e-05 & - & 6.1645e-03 & - & 1.1366e-03 & -   \\
			$20\times 20$ & 2.6391e-06 & 5.6714 & 6.2907e-07 & 5.0777 & 2.6220e-04 & 4.5552 & 5.2988e-05 & 4.4229    \\
			$40\times 40$ & 4.4477e-08 & 5.8908 & 1.0719e-08 & 5.8750 & 8.2779e-06 & 4.9853 & 1.9143e-06 & 4.7908    \\
			$80\times 80$ & 7.0826e-10 & 5.9726 & 1.7133e-10 & 5.9672 & 3.4381e-07 & 4.5896 & 7.6007e-08 & 4.6545   \\
			$160\times 160$ & 1.1414e-11 & 5.9554 & 2.7108e-12 & 5.9820 & 1.1621e-08 & 4.8869 & 2.5518e-09 & 4.8965     \\
			\hline
		\end{tabular}
	\end{center}
\end{table}

\begin{figure}[!htb]
	\centering
	\begin{subfigure}[t]{.32\linewidth}
		\centering
		\includegraphics[width=1\textwidth]{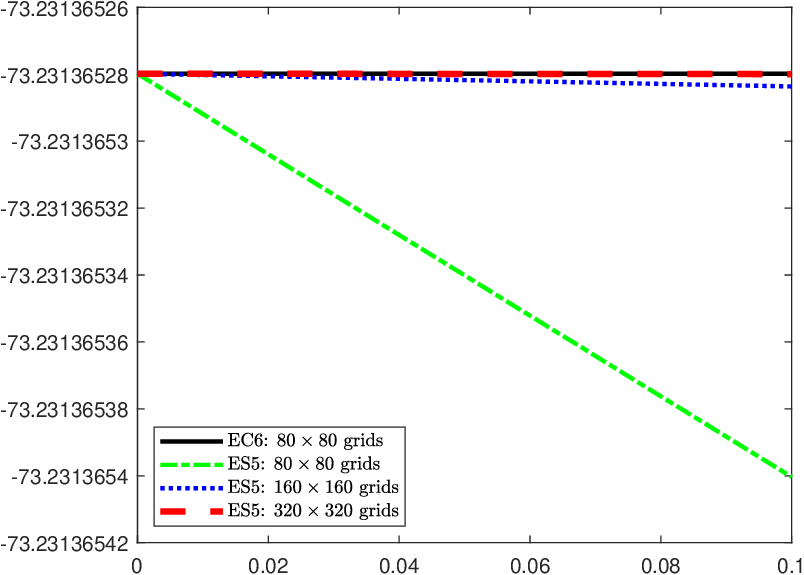}
		\caption{ RC-EOS \eqref{hEOS1}.}\label{Fig:2DevolutionDiscreteEntropy_RC}
	\end{subfigure}
	\begin{subfigure}[t]{.32\linewidth}
		\centering
		\includegraphics[width=1\textwidth]{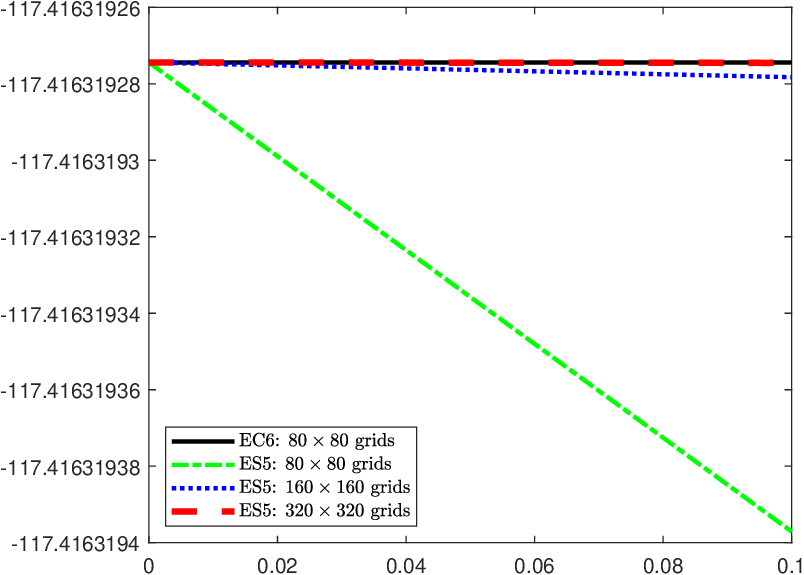}
		\caption{ IP-EOS \eqref{hEOS2}.}\label{Fig:2DevolutionDiscreteEntropy_IP}
	\end{subfigure}
	\begin{subfigure}[t]{.32\linewidth}
		\centering
		\includegraphics[width=1\textwidth]{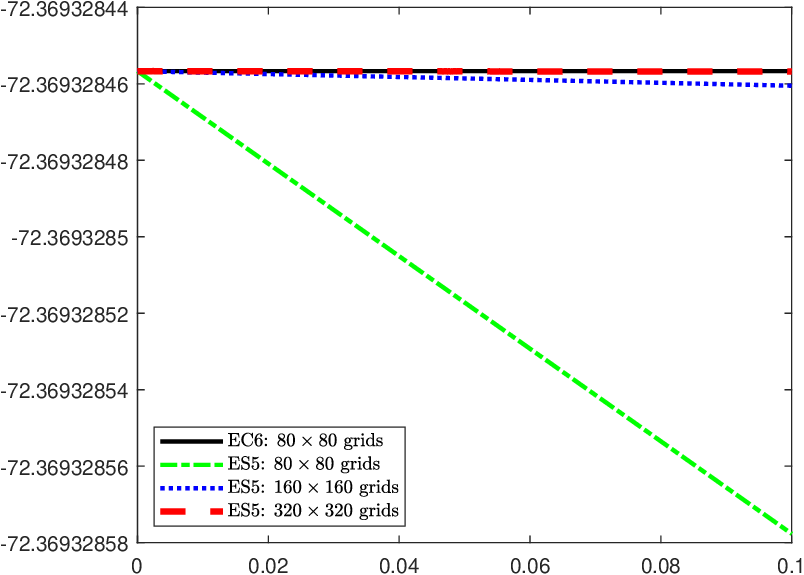}
		\caption{ TM-EOS \eqref{hEOS3}.}\label{Fig:2DevolutionDiscreteEntropy_TM}
	\end{subfigure}
	\caption{Example \ref{Ex5.2.1}: Evolution of discrete total entropy with different EOSs.}\label{Fig:2DevolutionDiscreteEntropy}
\end{figure}

In the following, we solve three 2D Riemann problems of RHD, which were proposed and studied in \cite{he2012adaptive1} with ID-EOS \eqref{ID-EOS}. We perform the same tests but with different EOSs.

\begin{example}[2D Riemann problem I]\label{Ex5.2.2}
The initial conditions of the first 2D Riemann problem are given by 
\begin{equation*}
  \textbf{V}(x,y,0) = 
    \begin{cases}
    (0.5, 0.5,-0.5, 5)^\top & x>0.5,\ y>0.5,\\
    (1, 0.5, 0.5, 5)^\top & x<0.5,\ y>0.5,\\
    (3, -0.5,0.5, 5)^\top & x<0.5,\ y<0.5,\\
    (1.5, -0.5, -0.5, 5)^\top &\text{otherwise}. 
    \end{cases}
\end{equation*}
The domain is taken as the unit square $[0,1]^2$ with outflow boundary conditions. 
This problem describes the interaction between four contact discontinuities, resulting in a spiral over time.

We adopt the IP-EOS \eqref{hEOS2} as our EOS and employ a uniform mesh consisting of $400\times400$ grids. The numerical solutions obtained by ES5 at $t = 0.4$ are presented in Figure \ref{Fig:2D_RP1_EOS} as contours for the rest-mass density and pressure logarithms. The results demonstrate that our ES5 can effectively resolve complex 2D relativistic waves.
\end{example}

\begin{figure}[!htb]
	\centering
	\begin{subfigure}[t]{.4\linewidth}
		\centering
		\includegraphics[width=0.95\textwidth]{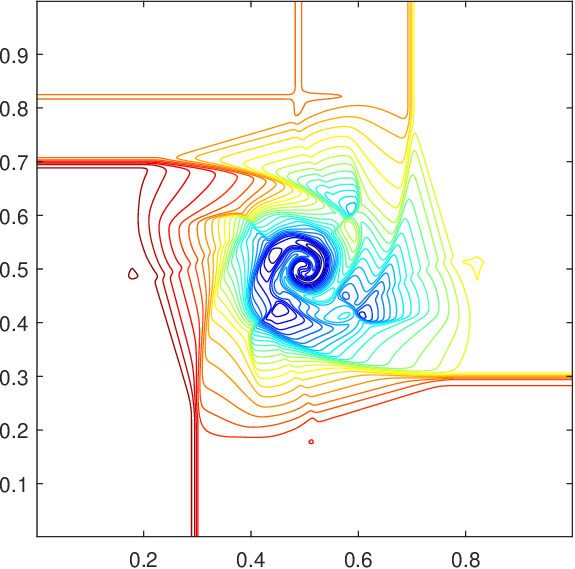}
		\caption{$\ln \rho$}
		\label{Fig:2D_RP1_EOS_density}
	\end{subfigure}
	\begin{subfigure}[t]{.4\linewidth}
		\centering
		\includegraphics[width=0.95\textwidth]{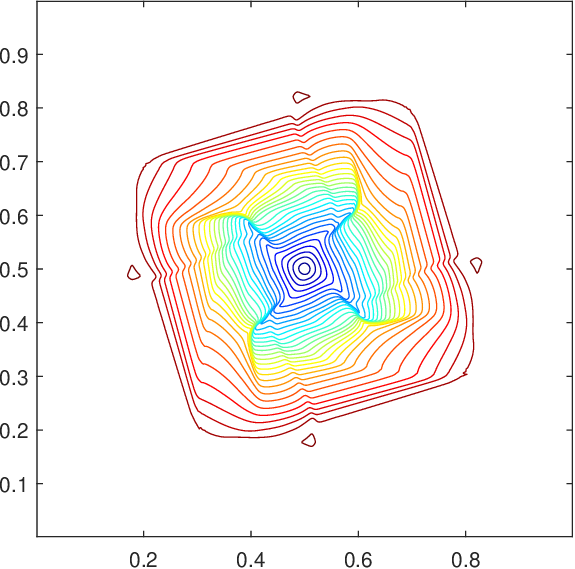}
		\caption{$\ln p$}
		\label{Fig:2D_RP1_EOS_pressure}
	\end{subfigure}
	\caption{Example \ref{Ex5.2.2}: Numerical results obtained by ES5 at $t=0.4$. 30 equally spaced contour lines are displayed.}\label{Fig:2D_RP1_EOS}
\end{figure}

\begin{example}[2D Riemann problem II]\label{Ex5.2.3}
The initial data of the second Riemann problem are given by
\begin{equation*}
  \textbf{V}(x,y,0) = 
    \begin{cases}
    (1, 0, 0, 1)^\top & x>0.5,\ y>0.5,\\
    (0.5771, -0.3529, 0, 0.4)^\top & x<0.5,\ y>0.5,\\
    (1, -0.3529, -0.3529, 1)^\top & x<0.5,\ y<0.5,\\
    (0.5771, 0, -0.3529, 0.4)^\top &\text{otherwise},
    \end{cases}
\end{equation*}
which is about the interaction between four rarefaction waves. 

In this example, TM-EOS \eqref{hEOS3} is adopted as our EOS, and  
 the spatial domain $[0,1]^2$ is divided into $400\times400$ uniform cells. The outflow boundary conditions are specified. Figure \ref{Fig:2D_RP2_EOS} presents the contours of the rest-mass density and pressure logarithms at $t=0.4$ computed by ES5. Our numerical scheme successfully captures the formation of two shock waves resulting from the interaction of four rarefaction waves. 
\end{example}

\begin{figure}[!htb]
	\centering
	\begin{subfigure}[t]{.4\linewidth}
		\centering
		\includegraphics[width=0.95\textwidth]{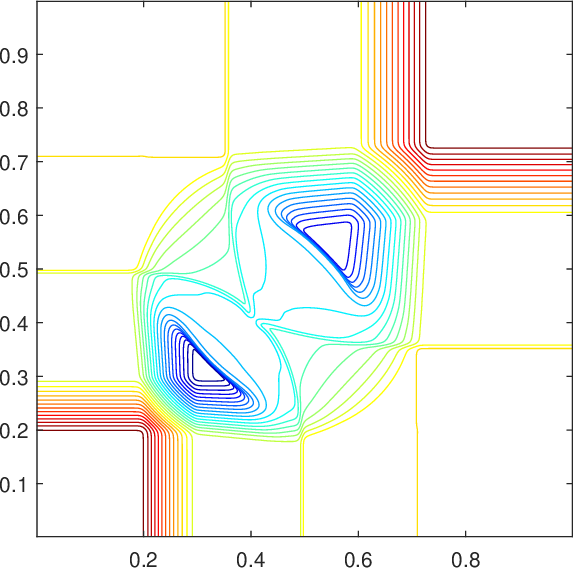}
		\caption{$\ln \rho$}
		\label{Fig:2D_RP2_EOS_density}
	\end{subfigure}
	\begin{subfigure}[t]{.4\linewidth}
		\centering
		\includegraphics[width=0.95\textwidth]{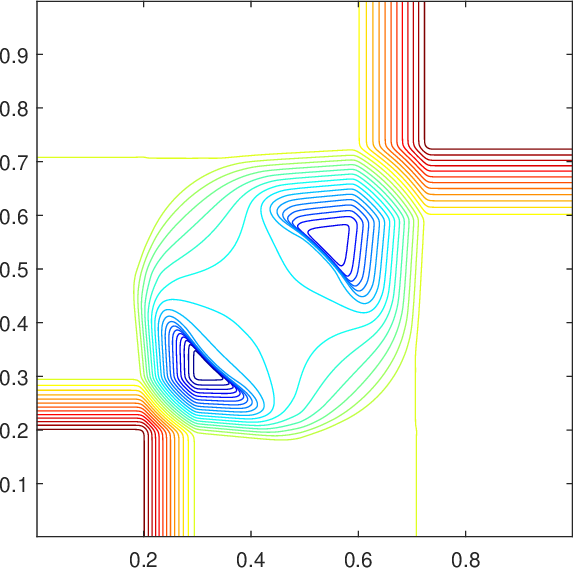}
		\caption{$\ln p$}
		\label{Fig:2D_RP2_EOS_pressure}
	\end{subfigure}
	\caption{Example \ref{Ex5.2.3}: Numerical results obtained by ES5 at $t=0.4$. 30 equally spaced contour lines are displayed.}\label{Fig:2D_RP2_EOS}
\end{figure}

\begin{example}[2D Riemann problem III]\label{Ex5.2.4}
The initial data for the third Riemann problem are given by
\begin{equation*}
  \textbf{V}(x,y,0) = 
    \begin{cases}
    (0.035145216124503, 0, 0, 0.162931056509027)^\top & x>0.5,\ y>0.5,\\
    (0.1, 0.7, 0, 1)^\top & x<0.5,\ y>0.5,\\
    (0.5, 0, 0, 1)^\top & x<0.5,\ y<0.5,\\
    (0.1, 0, 0.7, 1)^\top &\text{otherwise},
    \end{cases}
\end{equation*}
which describe two contact discontinuities and two shock waves, located at two corners. As time progresses, these waves interact with each other and coalesce into a ``mushroom cloud'' at the center of the domain $[0,1]^2$.  
For this problem, we adopt the RC-EOS \eqref{hEOS1} as our EOS and divide the computational domain $[0,1]^2$ into $400\times400$ uniform cells. Additionally, we specify outflow boundary conditions.  
The contours of the rest-mass density and pressure logarithms at $t=0.4$ are displayed in Figure \ref{Fig:2D_RP3_EOS}, which shows the high resolution of the complex wave patterns.
\end{example}

\begin{figure}[!htb]
	\centering
	\begin{subfigure}[t]{.4\linewidth}
		\centering
		\includegraphics[width=0.95\textwidth]{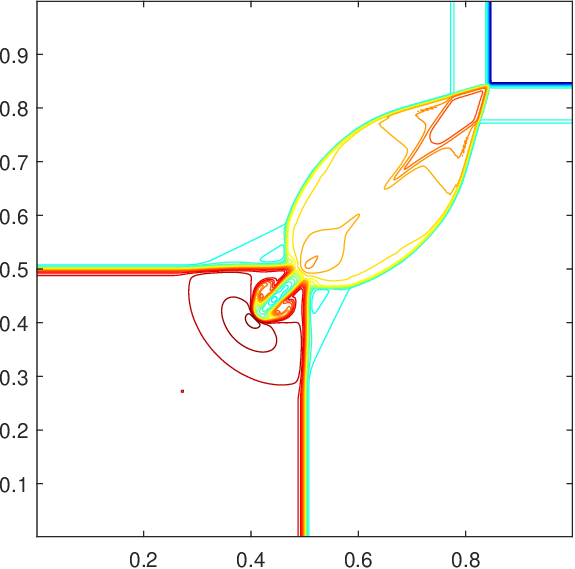}
		\caption{$\ln \rho$}
		\label{Fig:2D_RP3_EOS_density}
	\end{subfigure}
	\begin{subfigure}[t]{.4\linewidth}
		\centering
		\includegraphics[width=0.95\textwidth]{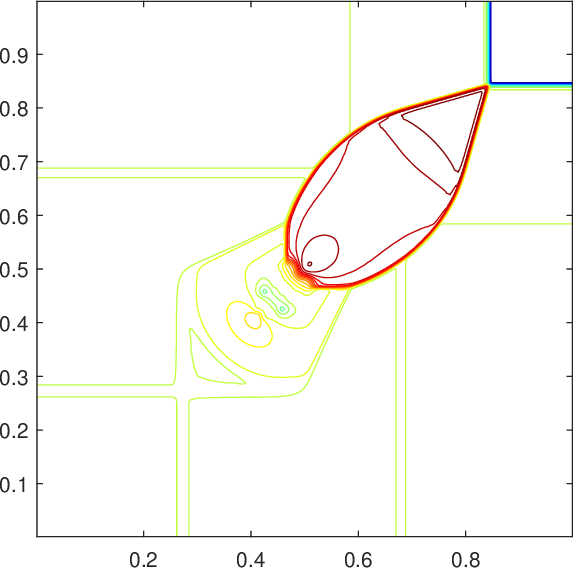}
		\caption{$\ln p$}
		\label{Fig:2D_RP3_EOS_pressure}
	\end{subfigure}
	\caption{Example \ref{Ex5.2.4}: Numerical results obtained by ES5 at $t=0.4$. 30 equally spaced contour lines are displayed.}\label{Fig:2D_RP3_EOS}
\end{figure}

\begin{figure}[!htb]
	\centering
	\includegraphics[width=0.6\textwidth]{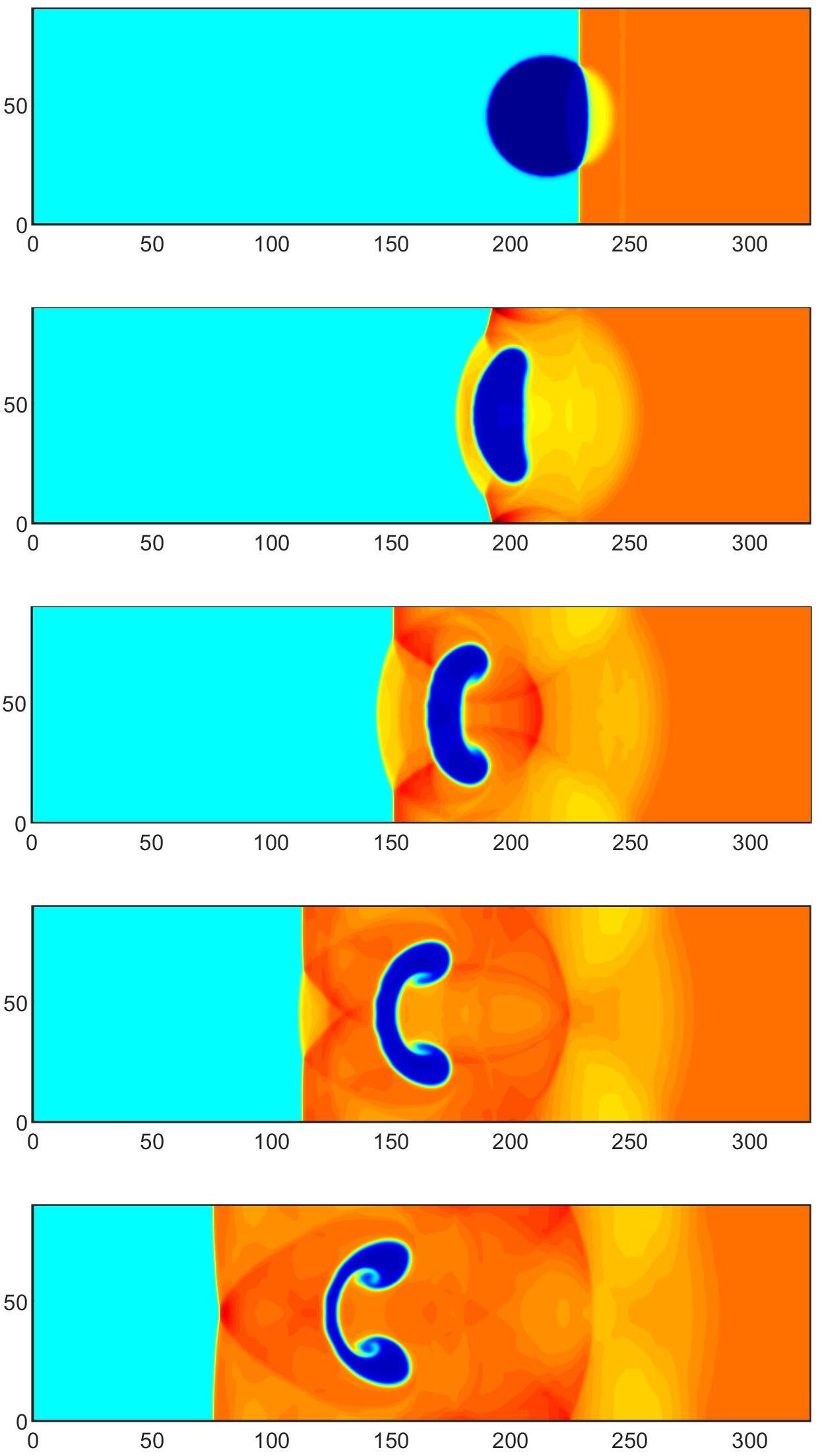}
	\caption{The first problem of Example \ref{Ex5.2.5}: the schlieren images of $\rho$ at $t=90,180,270,360,450$ (from top to bottom). RC-EOS \eqref{hEOS1} is used.}\label{Fig:2D_shockbb1_EOS}
\end{figure}

\begin{example}[Shock-bubble interaction problems]\label{Ex5.2.5}
The last example simulates two shock-bubble interaction problems within the computational domain $[0,325]\times[0,90]$, with reflective boundary conditions at $y=0$ and $y=90$, inflow boundary conditions at $x=325$, and outflow boundary conditions at $x=0$. 
The setups are similar to those in \cite{he2012adaptive1}, but with a different EOS, namely, the RC-EOS \eqref{hEOS1} is used in our setups. 
The left and right states of the shock are set as follows:
\begin{equation*}
  \mathbf{V}(x,y,0) = 
    \begin{cases}
    (1, 0, 0, 0.05)^\top&  x<265,\\
    (1.941272902134272,-0.200661045980881, 0, 0.15)^\top& x>265. 
    \end{cases}
\end{equation*}
We consider two shock-bubble interaction problems, and the setups of these two problems are the same except for the state of the bubble. 
For the first problem, the state of a (light) bubble is given by 
\begin{equation*}
    \textbf{V}(x,y,0) = (0.1358, 0, 0, 0.05)^\top,\quad\quad\quad\sqrt{(x-215)^2+(y-45)^2}\leq 25. 
\end{equation*}
For the second problem, the state of a (heavy) bubble is defined as 
\begin{equation*}
    \textbf{V}(x,y,0) = (3.1538, 0, 0, 0.05)^\top,\quad\quad\quad\sqrt{(x-215)^2+(y-45)^2}\leq 25.
\end{equation*}

\begin{figure}[!htb]
	\centering
	\includegraphics[width=0.6\textwidth]{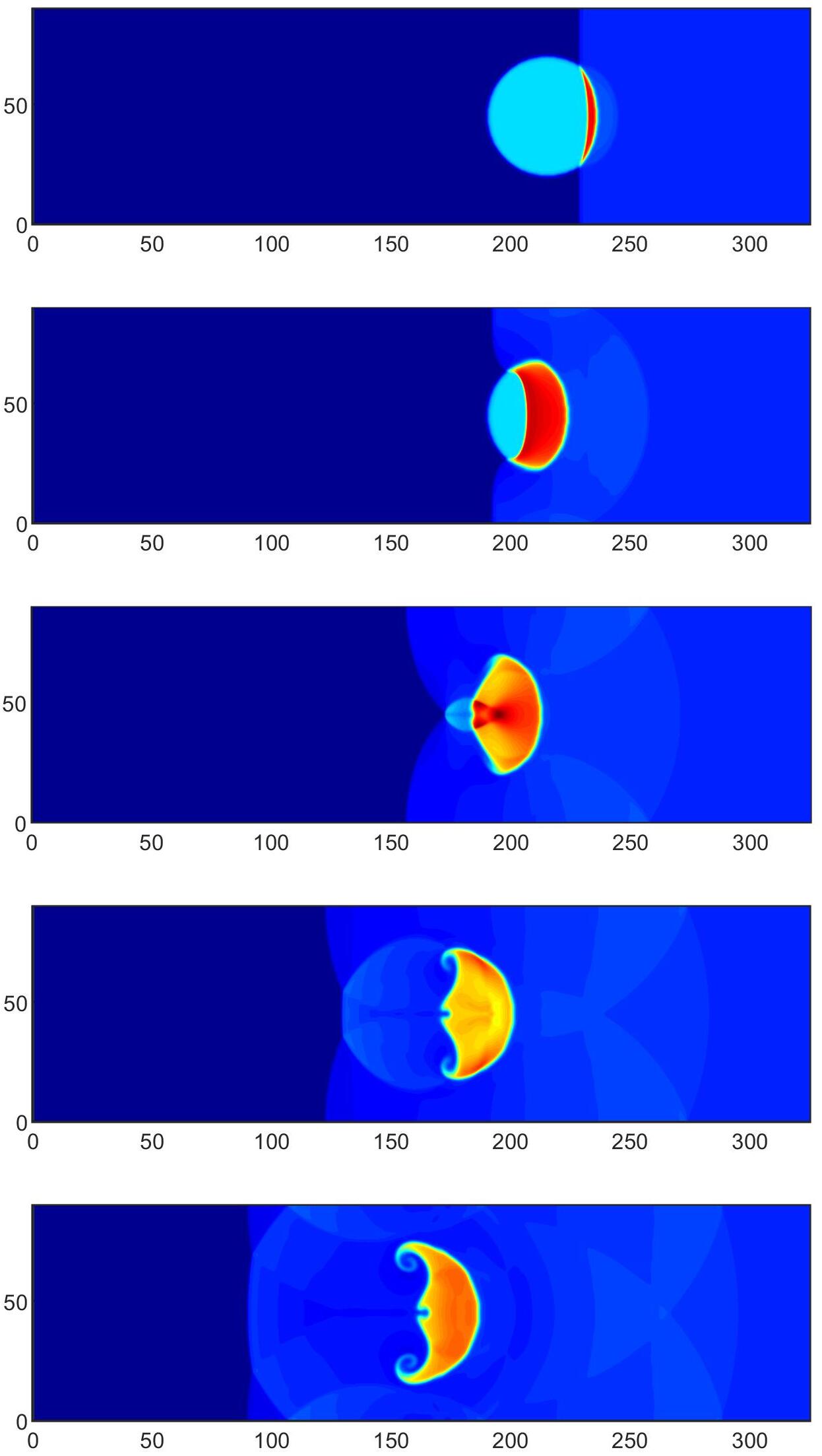}
	\caption{The second problem of Example \ref{Ex5.2.5}: the schlieren images of $\rho$ at $t=90,180,270,360,450$ (from top to bottom). RC-EOS \eqref{hEOS1} is used.}\label{Fig:2D_shockbb2_EOS}
\end{figure}

To visualize the interaction between the shock and the bubble for both problems, we present the schlieren images of the rest-mass density $\rho$ at $t=90,180,270,360,450$ in Figure \ref{Fig:2D_shockbb1_EOS} and Figure \ref{Fig:2D_shockbb2_EOS}. The numerical solutions are obtained using ES5 on $650\times180$ uniform grids. As the figures demonstrate, our scheme effectively captures the dynamics of the interaction between the left-moving shock and the bubble.
\end{example}



\section{Conclusions} \label{section:6}

The ideal EOS, originating from the non-relativistic case, is often a poor approximation for most relativistic flows. 
In this paper, we have made the first attempt to develop high-order ES finite difference schemes for RHD with  general Synge-type EOS \eqref{eq:gEOS}, which covers a wide range of more accurate EOSs. 
  We have discovered an entropy pair for the RHD equations with general Synge-type EOS. 
  We have rigorously proven that the found entropy function is strictly convex and derived the associated entropy variables.  
  However, due to the nonlinear coupling between the RHD equations, it is impossible to explicitly express primitive variables, fluxes, and entropy variables  in terms of conservative variables. As a result, it is   challenging to analyze the entropy structure of the RHD equations, study the convexity of the entropy, and construct EC numerical fluxes. 
  Based on a suitable set of parameter variables, we have constructed novel and explicit two-point EC fluxes in a unified form for general Synge-type EOS. 
  These two-point EC fluxes are used to design second-order EC schemes, and higher-order EC schemes are obtained by linearly combining the two-point EC fluxes. We have achieved arbitrarily high-order accurate ES schemes by adding dissipation terms into the EC schemes, based on ENO or WENO reconstructions. Furthermore, we have derived the general dissipation matrix for general Synge-type EOS based on the scaled eigenvectors of the RHD system. 
  The accuracy and effectiveness of the proposed schemes have been demonstrated through several numerical RHD examples with various special EOSs. 
  
  Our results are also useful for further developing ES discontinuous Galerkin or finite volume schemes for RHD with general EOS and may be helpful for exploring EC and ES schemes for the relativistic MHD equations with Synge-type EOS.

\vspace{5mm}
\noindent
{\bf Data Availability}  
Programming codes and associated data for the numerical examples in Section \ref{section:5} are available at \href{https://github.com/PeterX3AUG1/ESRHD\_gEOS\_source}{https://github.com/PeterX3AUG1/ESRHD\_gEOS\_source}.

\section*{Declarations} 
{\bf Conflict of interest} The authors declare that they have no conflict of interest.

\small
\bibliographystyle{siamplain}
\bibliography{bib}

\end{document}